\def\@chapter[#1]#2{\ifnum \c@secnumdepth >\m@ne
                       \if@mainmatter
                         \refstepcounter{chapter}%
                         \typeout{\@chapapp\space\thechapter.}%
                         \addcontentsline{toc}{chapter}%
                                   {\protect\numberline{\thechapter}#2}%
                       \else
                         \addcontentsline{toc}{chapter}{#2}%
                       \fi
                    \else
                      \addcontentsline{toc}{chapter}{#2}%
                    \fi
                    \chaptermark{#1}%
                    \addtocontents{lof}{\protect\addvspace{10\p@}}%
                    \addtocontents{lot}{\protect\addvspace{10\p@}}%
                    \if@twocolumn
                      \@topnewpage[\@makechapterhead{#2}]%
                    \else
                      \@makechapterhead{#2}%
                      \@afterheading
                    \fi}
\newtheorem{thm}{Théorème}[chapter]%section]
\newtheorem{cor}[thm]{Corolaire}
\newtheorem{lem}[thm]{Lemme}
\newtheorem{prop}[thm]{Proposition}
\theoremstyle{definition}
\newtheorem{defn}[thm]{Définition}
\theoremstyle{remark}
\newtheorem{rem}[thm]{Remarque}
\newtheorem{exe}[thm]{Exemple}
\newtheorem{spec}[thm]{Spéculation}
\numberwithin{equation}{section}
\newcommand\balank[1]{\rule[+.7ex]{#1}{.4pt}}
\newcommand{\Hom}{\textnormal{Hom}}
\newcommand{\Aut}{\textnormal{Aut}}
\newcommand{\Ker}{\textnormal{Ker}}
\newcommand{\SL}{\textnormal{SL}}
\newcommand{\GL}{\textnormal{GL}}
\newcommand{\lp}{(\!(}
\newcommand{\rp}{)\!)}
\newcommand{\Precone}{\textnormal{Precone}}
\newcommand{\Cone}{\textnormal{Cone}}
\newcommand{\K}{\mathbf{K}}
\newcommand{\Z}{\mathbf{Z}}
\newcommand{\R}{\mathbf{R}}
\newcommand{\Q}{\mathbf{Q}}
\newcommand{\F}{\mathbf{F}}
\newcommand{\T}{\mathbf{T}}
\newcommand{\N}{\mathbf{N}}
\def\threedots{\hbox{\,.\hspace{.13em}.\hspace{.13em}.\;}}
\begin{document}

%\address{Departement de Mathématiques Bâtiment 425, Université Paris-Sud 11, 91405 Orsay, France}
%\email{yves.cornulier@math.u-psud.fr}
%\subjclass[2000]{Primary 13C05; Secondary 13E05}
%\begin{flushright} \end{flushright}

\title{  {\normalsize Université Paris-Sud, Laboratoire de Mathématiques} \\ ~\\  Mémoire d'habilitation à diriger des recherches \\ ~ \\~\\~\\ {\huge {\bf Aspects de la géométrie des groupes}} \\ ~ \\~\\  par {\huge Yves de Cornulier} \\ {\normalsize (CNRS -- Université Paris-Sud 11)}\\ ~\\ ~ \\ {\large soutenue le 15 mai 2014 devant le jury composé de}  \\\begin{flushleft} \hspace{4cm}{\Large Yves Benoist} \\ \hspace{4cm}{\Large Thomas Delzant (rapporteur)}\\ \hspace{4cm}{\Large Damien Gaboriau }\\ \hspace{4cm}{\Large Pierre Pansu (rapporteur interne)} \\ \hspace{4cm}{\Large Bertrand Rémy}\\ \hspace{4cm}{\Large Alain Valette}\\ \hspace{4cm}{\Large (Rapporteur excusé: Shmuel Weinberger)}\end{flushleft} }
%\author{Yves Cornulier}%
\date{}%\date{\today}

%13Cxx Theory of modules and ideals
%13C05 Structure, classification theorems
%13Exx Chain conditions, finiteness conditions
%13E05 Noetherian rings and modules

% ----------------------------------------------------------------
%\begin{abstract}
%\end{abstract}
\maketitle
% ----------------------------------------------------------------

\selectlanguage{french}

\tableofcontents

\chapter*{Avant-propos}

Le mémoire qui suit est constitué de cinq chapitres, chacun étant une introduction (en français) à quelques uns des articles écrits entre ma thèse et maintenant. 

Les trois premiers chapitres, qui constituent le coeur de ce mémoire, relèvent de la théorie géométrique des groupes, et plus précisément de la géométrie à grande échelle des groupes, notamment des groupes de Lie (chapitres 1 et 2) et des groupes hyperboliques localement compacts (chapitre 3). 

Le quatrième chapitre relève de la théorie des groupes via leurs représentations unitaires et actions isométriques sur des espaces de Hilbert. 

Enfin le cinquième chapitre relève plus de la théorie ``structurelle" des groupes, en étudiant, pour un groupe donné (discret ou localement compact), l'espace de ses sous-groupes fermés (ou sous-groupes distingués fermés).

\medskip

Je remercie les rapporteurs de leur relecture, ainsi que tous les membres du jury pour avoir accepté d'y participer. Je remercie tous les collaborateurs et autres collègues avec qui j'ai eu l'occasion de discuter et d'échanger. Depuis mon entrée au CNRS en octobre 2006, j'ai passé un peu plus de 4 ans à l'IRMAR à Rennes, et maintenant 3 ans au LMO à Orsay. Je remercie chaleureusement ces laboratoires pour leur accueil.

%, ainsi que le département de mathématiques de Vanderbilt où j'ai effectué de nombreux séjours.

\chapter{Cônes asymptotiques de groupes}

Ce chapitre est une synthèse des articles suivants 
\begin{itemize}
\item \cite{CJT} {\em Dimension of asymptotic cones of Lie groups}, J. Topology, 19 pages (2008),
\item \cite{CI} {\em Asymptotic cones of Lie groups and cone equivalences}, Illinois J. Maths, 23 pages (2011),
\end{itemize}
et quelques corolaires inédits.

Les articles \cite{CJT,CI} forment un ensemble qui aurait pu constituer un seul article: en effet une partie de \cite{CI} permet d'apporter un éclairage conceptuel au principal point technique de \cite{CJT}. En résumé, l'article \cite{CI} introduit les notions de fonction (entre espaces métriques) cône-définie et cône-lipschitzienne; voir également plus bas. Cette notion est plus générale que celle de fonction lipschitzienne à grande échelle, et apparaît naturellement dans le contexte des groupes de Lie et de l'étude de leurs cônes asymptotiques. 

%et permet d'apporter d'une part un éclairage conceptuel à la principale difficulté technique de \cite{CJT}, et d'autre part de réduire l'étude des cônes asymptotiques de groupes de Lie à ceux d'une sous-classe plus agréable.

On rappelle que si $\omega$ est un ultrafiltre non principal sur l'ensemble des entiers et si $X$ est un espace métrique, on note $\Precone(X)$ l'ensemble des suites dans $X$ à croissance au plus linéaire ($(x_n)$ telle que $d(x_0,x_n)=O(n)$); on le munit de la pseudo-distance $d_\omega((x_n),(y_n))=\lim_\omega d(x_n,y_n))/n$ et $\Cone_\omega(X)$ est l'espace métrique obtenu en identifiant dans $(\Precone(X),d_\omega)$ les points à distance nulle, on l'appelle cône asymptotique de $X$ (par rapport à l'ultrafiltre $\omega$); l'image de la suite $(x_n)$ y est notée $(x_n)^\omega$.

Cette notion très générale est due à Van der Dries et Wilkie \cite{DW} et permettait de placer dans un cadre général du point de vue de la théorie des modèles une définition beaucoup plus restrictive (convergence de Gromov-Hausdorff) introduite par Gromov \cite{Gro81} pour caractériser les groupes à croissance polynomiale. Cela a apporté un éclairage important dans le cas Gromov-hyperbolique, où les cônes asymptotiques sont des arbres réels (Gromov, Paulin, Kapovich-Leeb). Pour des groupes plus généraux, c'est Gromov lui-même qui a démontré la pertinence des cônes asymptotiques (hors convergence Gromov-Hausdorff et cas hyperbolique) en géométrie, dans son livre fondateur \cite{Gro93}, qui a engendré de nombreux travaux sur la géométrie des groupes, au premier plan desquels figurent les travaux de Drutu, Kleiner, Leeb, etc. Un des intérêts de cette notion est qu'il s'agit d'un invariant topologique (et même métrique à homéomorphisme bilipschitzien près) de la classe de quasi-isométrie de $G$.

Un des points du propos de Gromov dans \cite{Gro93} était que bien que les cônes asymptotiques des groupes à croissance non polynomiale paraissent à première vue ``énormes" (non localement compacts, non séparables\dots), ils sont, dans beaucoup de cas, très raisonnables (par exemple, de dimension topologique finie). Ainsi, dans le cas des groupes de Lie semi-simples, ils ont une structure d'immeuble affine, et la dimension du cône asymptotique est alors égale au rang réel du groupe. Dans le cas général des groupes de Lie connexes, le calcul de cette dimension est le résultat principal de \cite{CJT}.

\begin{thm}\label{codimre}Soit $G$ un groupe de Lie connexe (muni d'une métrique riemannienne invariante à gauche). Alors la dimension topologique du cône asymptotique de $G$ est un nombre entier explicite, qui, lorsque $G$ est un groupe de Lie résoluble simplement connexe, est égal à la codimension du radical exponentiel de $G$.
\end{thm}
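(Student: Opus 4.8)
Le plan est de ramener le calcul au cas d'un groupe de Lie r\'esoluble simplement connexe, puis d'\'etablir la valeur de la dimension par un encadrement. Pour un groupe de Lie connexe g\'en\'eral $G$, la d\'ecomposition de Levi $G = R \rtimes S$ scinde le probl\`eme : la partie semi-simple $S$ contribue par son rang r\'eel --- les c\^ones asymptotiques des groupes semi-simples \'etant des immeubles euclidiens dont la dimension topologique vaut le rang r\'eel (Kleiner--Leeb) --- tandis que le radical r\'esoluble $R$ porte l'essentiel de l'analyse. Quitte \`a quotienter par un sous-groupe compact distingu\'e et \`a ajuster par un noyau fini, op\'erations qui ne modifient pas le type de quasi-isom\'etrie, je me ram\`enerais au cas o\`u $G$ est r\'esoluble simplement connexe. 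On dispose alors du radical exponentiel $R_{\exp}(G)$, sous-groupe distingu\'e ferm\'e (Guivarc'h, Osin) caract\'eris\'e par le fait que le quotient $G/R_{\exp}(G)$ est de type (R), donc \`a croissance polynomiale, et que les \'el\'ements de $R_{\exp}(G)$ y sont exponentiellement distordus.

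L'encadrement se fait en deux temps. La cl\'e de la majoration est la projection sur le quotient \`a croissance polynomiale : par la th\'eorie de Pansu pour les groupes nilpotents, \'etendue aux groupes \`a croissance polynomiale (Breuillard), le c\^one $\Cone_\omega(G/R_{\exp}(G))$ est un groupe de Carnot, dont la dimension topologique vaut $\dim G - \dim R_{\exp}(G)$, soit la codimension annonc\'ee. Je montrerais que la projection induit une application surjective $\Cone_\omega(G) \to \Cone_\omega(G/R_{\exp}(G))$ dont les fibres, provenant du radical exponentiel ``\'ecras\'e'' par sa distorsion exponentielle, sont de dimension topologique nulle ; un th\'eor\`eme de type Hurewicz donne alors $\dim \Cone_\omega(G) \le \dim \Cone_\omega(G/R_{\exp}(G)) = \text{codim}\, R_{\exp}(G)$. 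Pour la minoration, il suffit d'exhiber dans $\Cone_\omega(G)$ un sous-espace de dimension \'egale \`a la codimension : je chercherais un sous-groupe transverse $H$, compl\'ement du radical exponentiel, non distordu dans $G$ et isomorphe \`a $G/R_{\exp}(G)$, dont le c\^one se plonge bilipschitziennement dans $\Cone_\omega(G)$.

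Le point technique central --- et l'obstacle principal --- r\'eside dans la description des fibres, c'est-\`a-dire du comportement du radical exponentiel muni de la m\'etrique induite par $G$. Sur $R_{\exp}(G)$, cette m\'etrique induite est exponentiellement distordue par rapport \`a la m\'etrique intrins\`eque, de sorte que les param\'etrages naturels adapt\'es \`a une d\'ecomposition $\mathfrak{g} = \mathfrak{r}_{\exp} \oplus \mathfrak{h}$ ne sont pas lipschitziens \`a grande \'echelle. C'est ici qu'interviennent les notions de fonction c\^one-lipschitzienne et c\^one-d\'efinie de \cite{CI}, strictement plus g\'en\'erales : elles fournissent le cadre permettant de construire des applications bien d\'efinies et contr\^ol\'ees entre $\Cone_\omega(G)$ et le mod\`ele ``base de Carnot $\times$ fibres arborescentes'', l\`a o\`u la lipschitzianit\'e \`a grande \'echelle fait d\'efaut. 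Je montrerais ainsi que les fibres sont totalement discontinues, \`a l'image des c\^ones des groupes hyperboliques qui sont des arbres r\'eels.

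La difficult\'e essentielle sera donc double. D'une part, il faudra \'etablir rigoureusement, dans le langage de \cite{CI}, que les applications c\^one-d\'efinies en jeu transmettent correctement l'information dimensionnelle \`a travers le quotient, et que le radical exponentiel --- de dimension potentiellement grande comme vari\'et\'e --- se contracte bien en un objet transversalement de dimension nulle dans le c\^one. D'autre part, il faudra garantir que le comptage de dimension --- majoration par la base via Hurewicz, minoration par le sous-groupe transverse non distordu --- fournisse exactement l'\'egalit\'e. Le cas d'un groupe de Lie connexe quelconque s'en d\'eduira en recombinant le rang r\'eel de la partie semi-simple et la codimension du radical exponentiel de la partie r\'esoluble, ce qui donne l'entier explicite de l'\'enonc\'e sous forme d'une formule en termes des poids de la repr\'esentation adjointe.
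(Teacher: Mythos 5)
Votre majoration suit la m\^eme voie que l'article : la projection $\Cone_\omega(G)\to\Cone_\omega(G/E)$ (o\`u $E$ d\'esigne le radical exponentiel) a des fibres de dimension nulle car $\Cone(E,d_G)$ est de dimension $0$ par distorsion exponentielle, et l'argument de type Hurewicz (d\^u \`a Gromov, pr\'ecis\'e par Burillo) donne $\dim\Cone_\omega(G)\le\dim\Cone_\omega(G/E)=\dim(G/E)$, cette derni\`ere valeur \'etant fournie par Pansu. La r\'eduction au cas r\'esoluble, et m\^eme triangulable, est \'egalement dans l'esprit du lemme \ref{lemtr}, quoique la formule g\'en\'erale comporte aussi une contribution du centre discret de la partie semi-simple que votre recombinaison finale passe sous silence.

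Le point qui ne fonctionne pas est la minoration. Vous proposez d'exhiber un sous-groupe transverse $H$, compl\'ement du radical exponentiel, non distordu et isomorphe \`a $G/E$. Or un tel compl\'ement n'existe pas en g\'en\'eral : la d\'ecomposition $G=E\rtimes N$ n'a lieu que dans des cas particuliers (par exemple $E$ ab\'elien), et le plus petit contre-exemple a $\dim(G)=5$ et $\dim(G/E)=2$. C'est pr\'ecis\'ement l\`a que r\'eside toute la difficult\'e de \cite{CJT} : il faut fabriquer un plongement de $\Cone_\omega(G/E)$ dans $\Cone_\omega(G)$ sans disposer d'aucun scindement au niveau du groupe. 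La solution (th\'eor\`eme \ref{t_cel}) consiste \`a modifier la loi de $G$ pour obtenir un groupe $G'=E\rtimes N$ qui, lui, est scind\'e et sur lequel $N$ agit de fa\c{c}on $\R$-diagonalisable, l'application identit\'e $G\to G'$ \'etant une c\^one-bilipschitz-\'equivalence ; c'est ici, et non dans l'analyse des fibres, que les notions de fonction c\^one-d\'efinie et c\^one-lipschitzienne de \cite{CI} sont indispensables, cette identit\'e n'\'etant pas lipschitzienne \`a grande \'echelle. Signalons enfin que la comparaison des fibres avec les c\^ones des groupes hyperboliques est inexacte : un arbre r\'eel est connexe et de dimension $1$, alors que l'\'enonc\'e pertinent est que $\Cone(E,d_G)$ est de dimension $0$.
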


Ici, le radical exponentiel $\text{Exp}(G)$ de $G$ (introduit discrètement par Guivarc'h puis redécouvert par Osin) est l'ensemble des éléments exponentiellement distordus de $G$, de sorte que sa codimension est égale à la dimension de $G/\text{Exp}(G)$, qui est le plus gros quotient de $G$ à croissance polynomiale.

Dans le cas général, la formule \cite[Corollary 1.6]{CJT} fait intervenir trois contributions: le rang de la partie semi-simple, le rang du centre (discret) de la partie semi-simple, et un troisième terme qui donne la contribution du radical. Cependant, le cas général se ramène au cas résoluble, et même au cas résoluble {\em triangulable}, en vertu du lemme ci-dessous.

\begin{defn}\label{glt}On appelle groupe de Lie triangulable un groupe de Lie isomorphe à un sous-groupe connexe fermé de matrices réelles triangulaires supérieures, ou de manière équivalente, dont la représentation adjointe est triangulable sur $\mathbf{R}$.
\end{defn}

\begin{lem}\label{lemtr}
Soit $G$ un groupe de Lie connexe. Alors il existe des morphismes continus de groupes localement compacts $G\hookleftarrow G_1\twoheadrightarrow G_2 \hookrightarrow G_3\hookleftarrow G_4$, où $G_4$ est un groupe de Lie triangulable, où les flèches $\hookrightarrow$ sont des inclusions cocompactes et la flèche $\twoheadrightarrow$ est le quotient par un sous-groupe compact distingué.
\end{lem}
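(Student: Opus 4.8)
The plan is to construct the zigzag by peeling off, one arrow at a time, the three obstructions to triangulability --- the semisimple (hence ``rotational'' at infinity) part, a possible non-compact centre, and the elliptic eigenvalues of the adjoint action --- while staying inside the two moves allowed by the statement, cocompact inclusion and quotient by a compact normal subgroup.

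\emph{First arrow $G\hookleftarrow G_1$: reduction to the amenable case.} Write a Levi decomposition $G=R\,S$ with $R$ the radical and $S$ a semisimple Levi factor, and an Iwasawa decomposition $S=KAN$. I would take for $G_1$ the closed subgroup generated by $R$, by $AN$ and by the centre $Z(G)$. Since $R$ is a solvable normal subgroup and $AN$ is solvable, $G_1$ is an extension of a discrete group by a connected solvable Lie group; and because $S/AN\cong K$ is compact (the Iwasawa projection), $G/G_1$ is compact. Adjoining $Z(G)$ matters precisely in the infinite-centre case, e.g. $G=\widetilde{\SL}_2(\R)$: there $AN$ alone has non-compact quotient $\widetilde K\cong\R$, but the central $\Z\subset\widetilde K$ sits as a cocompact lattice in $\widetilde K$, so $G_1=AN\times\Z$ \emph{is} cocompact. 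This is why $G_1$ is only required to be locally compact, and may be disconnected.

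\emph{Middle arrows $G_1\twoheadrightarrow G_2\hookrightarrow G_3$.} The quotient by the maximal compact normal subgroup of $G_1$ --- a central torus --- produces a $G_2$ with no non-trivial compact normal subgroup (this arrow is trivial when, as in the example above, $G_1$ has no compact subgroup). The inclusion $G_2\hookrightarrow G_3$ is where the large-scale geometry is absorbed: I would (a) turn each remaining infinite discrete central $\Z$ into a cocompact lattice of an adjoined one-parameter group, replacing a factor $\Z$ by $\R$, and (b) embed $G_2$ into the semidirect product $G_3=G_2\rtimes T$, where $T$ is the compact torus generated by the imaginary parts of the semisimple parts of the operators $\mathrm{ad}(X)$. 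For $\widetilde{\SL}_2(\R)$ step (a) already gives $G_3=AN\times\R$, which is triangulable; for $\widetilde{E}(2)=\R^2\rtimes\R$ one gets $G_1=\R^2\times\Z$ and $G_3=\R^3$.

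\emph{Last arrow $G_3\hookleftarrow G_4$: the triangulable model.} Inside $G_3=G_2\rtimes T$ I would exhibit the ``graph'' subgroup $G_4$ obtained by pairing each elliptic generator of $G_2$ with the element of $T$ cancelling its rotation; on $G_4$ the residual operators $\mathrm{ad}(X)$ have only real eigenvalues, so $G_4$ is a connected triangulable Lie group in the sense of Definition~\ref{glt}, and it is cocompact in $G_3$ because the two differ only along the compact torus directions (for the spiral group $\R^2\rtimes_{e^{t}R(t)}\R$ this produces $\R^2\rtimes_{e^{t}}\R$). The main obstacle is to carry out (a) and (b) simultaneously and compatibly: adjoining the $\R$'s for the centre and the torus $T$ for the rotations must be arranged so that $G_4$ is a genuine closed connected subgroup and so that all four quotients $G/G_1$, $G_1/G_2$, $G_3/G_2$, $G_3/G_4$ are compact. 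Verifying these cocompactness claims --- via the Iwasawa identity $S/AN\cong K$ together with the lattice inclusions $\Z\hookrightarrow\R$ and $G_4\hookrightarrow G_2\rtimes T$ --- is the technical heart of the argument, the delicate point being to control the non-compact centre of $G$ (the covering directions of $K$) at the same time as the elliptic part.
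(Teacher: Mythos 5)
Your route is the one the paper itself relies on: the memoir does not reprove this lemma but points to \cite{CJT} (Lemmas 6.7 and 2.4), and the construction there is exactly the combination you describe --- Iwasawa $AN$ together with the discrete centre of the Levi factor to absorb the semisimple part, a quotient by the maximal compact normal subgroup, then the ``adjoin a compact torus $T$ of automorphisms and take the graph subgroup'' trick to eliminate the non-real eigenvalues of the adjoint action. So the strategy is correct and matches the cited proof. However, as written the argument has holes precisely at the points where the work lies.

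Concretely. (1) For the first arrow, what makes $R\cdot AN\cdot Z$ cocompact is the centre $Z(S)$ of the Levi factor, not $Z(G)$: these differ in general, since $Z(S)$ need not centralise the radical. One must either observe that $Z(S)$ acts on $R$ through a subgroup of $\Aut(\mathfrak{r})$ in which the image of $S$ has finite centre, so that a finite-index subgroup of $Z(S)$ is genuinely central in $G$, or simply take $G_1=R\cdot AN\cdot Z(S)$ without insisting on centrality. Closedness of $G_1$ also deserves a word, since Levi factors need not be closed in $G$; it is rescued by working in $G/R$, where the image of $AN$ is the $AN$ of an Iwasawa decomposition and the image of the centre is discrete. (2) The maximal compact normal subgroup of $G_1$ need not be a central torus ($G_1$ is disconnected); luckily nothing uses this. (3) The genuine gap is the one you flag yourself: the simultaneous construction of $G_3$ (central $\Z^k$ thickened to $\R^k$, compact torus $T$ adjoined) and of the closed, connected, cocompact graph subgroup $G_4$ is only exhibited on examples. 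In general you must justify that the central $\Z^k$ splits off (or that the central extension extends over $\R^k$), realise $T$ as an actual compact subgroup of automorphisms compatible with that thickening, and verify that the graph subgroup is closed, connected, cocompact and has $\R$-triangulable adjoint representation. Declaring this ``the technical heart'' without carrying it out leaves the lemma unproved; that verification is precisely the content of \cite[Lemma 6.7]{CJT}.
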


Ainsi, les flèches ci-dessus sont des quasi-isométries et induisent donc des homéo\-mor\-phismes bilipschitziens entre les cônes asymptotiques. La preuve (constructive) de ce lemme, qui ramène tout le problème difficile de la classification à quasi-isométrie près des groupes de Lie aux groupes de Lie connexes à ceux qui sont triangulables, est dispersée (maladroitement) dans \cite{CJT} (Lemma 6.7 et~2.4). 

% Écrivons $(E,d_G)$ pour insister qu'on munit $G$ de la restriction d'une métrique

Concentrons-nous maintenant sur le cas d'un groupe de Lie triangulable $G$, de radical exponentiel $E$.
Muni de la distance $d_G$ induite par $G$, le cône asymptotique de $E$ est de dimension 0. Un argument topologique dû à Gromov \cite{Gro93}, et précisé par Burillo \cite{Bur} ainsi que dans \cite{CJT} permet alors de majorer la dimension topologique de $\Cone_\omega(G)$ par celle de $\Cone_\omega(G/E)$, qui est connue pour être égale à $\dim(G/E)$ grâce au travail de Pansu \cite{Pan1}: on peut voir en effet $\Cone(E,d_G)$ comme la fibre de la projection canonique $\Cone_\omega(G)\to\Cone_\omega(G/E)$ (qui n'est pas un fibré localement trivial!). 

% mais d'autres existent déjà BOURBAKI???

La difficulté est de démontrer que la dimension topologique de $\Cone_\omega(G)$ est supérieure ou égale à $\dim(G/E)$. Dans les cas les plus simples (par exemple si $E$ est abélien), on a une décomposition en produit semi-direct $G=E\rtimes N$, de sorte que $\Cone_\omega(G)$ contienne une copie isométrique de $\Cone_\omega(N)$, qui est de dimension $N$. En général, il n'existe pas de telle décomposition semi-directe; des contre-exemples apparaissent dans \cite{CJT} (le plus petit contre-exemple a $\dim(G)=5$ et $\dim(G/E)=2$). Le travail dans \cite{CJT} consiste alors à fabriquer explicitement un plongement de $\Cone_\omega(G/E)$ dans $\Cone_\omega(G)$. Le point de vue de \cite{CI} permettant de montrer un résultat plus précis, j'adapte  partir de maintenant ce point de vue.

Il est connu depuis le départ que toute application lipschitzienne à grande échelle (i.e.\ satisfaisant une inégalité du type $d(f(x),f(y))\le Cd(x,y)+C'$ induit (fonctoriellement) une application lipschitzienne (avec la même constante $C$) entre cônes asymptotiques: en effet, l'application entre suites $(x_n)\mapsto (f(x_n))$ envoie précone sur précone et est compatible avec la relation d'équivalence définissant le cône. Or \cite{CI} part de l'observation qu'on peut définir avec la même formule une fonction entre les cônes, sous une hypothèse plus faible sur $f$; par exemple, une fonction à croissance sous-linéaire induit une fonction constante. 

\begin{defn}[\cite{CI}]
Soient $X_1,X_2$ des espaces métriques et $f:X_1\to X_2$ une fonction. On dit que 
\begin{itemize}
\item $f$ est cône-définie si $|f(x)|=O(|x|)$ et $(d(x,y)+1)/(|x|+|y|)$ tend vers zéro implique $d(f(x),f(y))/(|x|+|y|)$ tend vers zéro.
\item $f$ est cône-$c$-lipschitzienne si $d(f(x),f(y))\le cd(x,y)+o(|x|+|y|)$
\end{itemize}
\end{defn}
Ici $|\cdot|$ désigne la distance à un point-base (dont le choix est sans importance) et $o(|x|+|y|)$ signifie un terme $\le q(|x|+|y|)$ où $q:\R_+\to\R_+$ est une fonction bornée et tendant vers 0 à l'infini.

Dans \cite{CI}, on montre que la condition cône-définie est la condition optimale pour pouvoir définir pour tout ultrafiltre $\omega$, la fonction $\Cone_\omega(X_1)\to\Cone_\omega(X_2)$ par $(x_n)^\omega\mapsto (f(x_n))^\omega$. On vérifie de plus qu'alors cette fonction est automatiquement continue, et, en outre, qu'elle est $c$-lipschitzienne pour tout $\omega$ si et seulement si $f$ est cône-$c$-lipschitzienne. 

Cela permet de définir deux catégories, dont les objets sont les espaces mét\-riques et les flèches sont les fonctions cône-définies (resp. cône-lipschitzienne) modulo cône-équivalence ($f\sim f'$ si $d(f(x),f'(x))=o(|x|)$). Les fonctions induisant des isomorphismes de cette catégorie sont appelées cône-équivalences, resp.\ cône-bilipschitz-équivalences. Elles induisent donc des homéomorphismes (resp.\ homéo\-mor\-phismes bilipschitziens) entre cônes asymptotiques. Le Theorem 1.2 de \cite{CI} (qui conceptualise l'argument de \cite{CJT}) peut s'exprimer (de façon légèrement imprécise) comme ceci:

\begin{thm}\label{t_cel}
Soit $G$ un groupe de Lie triangulable, $E$ son radical exponentiel, et $N=G/E$. Alors on peut associer à $G$, de façon naturelle, un produit semi-direct $G'=E\rtimes N$ dont $E$ est le radical exponentiel, muni d'une cône-bilipschitz-équivalence $G\to G'$, et tel que l'action de $N$ sur $E$ est $\R$-diagonalisable (en particulier $[N,N]$ est central dans $G'$).
\end{thm}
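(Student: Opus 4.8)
The plan is to build $G'$ from the Lie algebra $\mathfrak{g}$ of $G$, and then to exhibit a single explicit map $G\to G'$ — the identity in well-chosen global coordinates — which I prove to be a cone-bilipschitz-equivalence by comparing the two left-invariant metrics. \emph{Algebraic construction.} Since $G$ is triangulable, I fix a maximal $\R$-diagonalizable abelian subalgebra (``split torus'') $\mathfrak{a}\subseteq\mathfrak{g}$ and decompose $\mathfrak{g}=\bigoplus_{\lambda}\mathfrak{g}_\lambda$ into the weight spaces of $\mathrm{ad}(\mathfrak{a})$, a Lie-algebra grading by $\mathfrak{a}^*$ (so $[\mathfrak{g}_\lambda,\mathfrak{g}_\mu]\subseteq\mathfrak{g}_{\lambda+\mu}$). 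The exponential radical $\mathfrak{e}$ is the ideal generated by the nonzero-weight spaces; it is graded, contains every $\mathfrak{g}_\lambda$ with $\lambda\neq0$, and $\mathfrak{g}/\mathfrak{e}=\mathfrak{n}$ is the zero-weight nilpotent quotient. One must keep in mind the subtlety that $\mathfrak{e}\cap\mathfrak{g}_0$ may be nonzero (e.g. through brackets $[\mathfrak{g}_\lambda,\mathfrak{g}_{-\lambda}]$). I then set $\mathfrak{g}'=\mathfrak{e}\rtimes_\phi\mathfrak{n}$, keeping the brackets inside $\mathfrak{e}$ and inside $\mathfrak{n}$, but letting $\mathfrak{n}$ act on $\mathfrak{e}$ through the embedding $\mathfrak{a}\hookrightarrow\mathfrak{n}$ (torus elements are not distorted) followed by the diagonal weight action ($\mathfrak{a}$ acts on $\mathfrak{g}_\lambda$ by the scalar $\lambda$). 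Because the weights vanish on $[\mathfrak{g},\mathfrak{g}]$, this $\phi$ is a homomorphism into $\mathrm{Der}(\mathfrak{e})$ with $\R$-diagonalizable image, $\mathfrak{g}'$ is a Lie algebra, $\mathfrak{e}$ is again its exponential radical, and $[\mathfrak{n},\mathfrak{n}]$ acts trivially on $\mathfrak{e}$. Let $G'=E\rtimes N$ be the associated simply connected group.

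Next I construct the comparison map. Both $G$ and $G'$ are diffeomorphic to $\mathfrak{e}\times\mathfrak{n}$ via $(v,s)\mapsto\exp(v)\exp(\tilde s)$ for a fixed linear section $\mathfrak{n}\to\mathfrak{g}$; let $f\colon G\to G'$ be the identity in these coordinates. Since $f^{-1}$ has exactly the same form with the roles of $G$ and $G'$ exchanged, and the composites $f^{-1}\circ f$, $f\circ f^{-1}$ are literally the identity (hence cone-equivalent to it), it suffices to prove that $f$ and $f^{-1}$ are cone-$c$-Lipschitz for some $c$: this makes $f$ an isomorphism in the cone-Lipschitz category, i.e. a cone-bilipschitz-equivalence. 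Concretely I must establish, for the two left-invariant metrics, $d_{G'}(f(g),f(h))\le c\,d_G(g,h)+o(|g|+|h|)$ together with the symmetric bound.

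The heart of the matter is a two-sided word-length estimate for triangulable solvable groups. Writing $g$ with nilpotent part $\bar g\in N$ and exponential-radical coordinates $v_\lambda\in\mathfrak{g}_\lambda$, the length has the schematic form $d_G(e,g)\asymp |\bar g|_N+\sum_{\lambda\neq0}\frac{1}{\|\lambda\|}\bigl|\log^+\|v_\lambda\|\bigr|$ up to bounded multiplicative and additive error — the crucial feature being that the exponential radical enters only \emph{logarithmically}, its elements being contracted exponentially along $\mathfrak{a}$. Now $G$ and $G'$ differ only in the unipotent part of the action of $\mathfrak{g}_0$ on $\mathfrak{e}$, so transporting a point through $f$ multiplies each coordinate $v_\lambda$ by a factor that is \emph{polynomial} in the $N$-parameters, i.e. polynomial in $|\bar g|$. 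On the cone scale $|g|+|h|\asymp n$ one has $|\bar g|=O(n)$, so each $\log^+\|v_\lambda\|$ is altered by at most $O(\log n)=o(n)$. Hence the two length formulas agree up to $o(|g|+|h|)$, which is exactly the cone-bilipschitz bound sought (with $c$ the bi-Lipschitz constant relating the two formulas), and it also confirms that $f$ is cone-defined.

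The main obstacle is the length estimate itself. The upper bound requires exhibiting efficient words that ``park'' the exponential-radical coordinates using controlled excursions along $\mathfrak{a}$, while the matching lower bound must rule out cheaper strategies; both are complicated by the coupling between distinct weight spaces coming from the nilpotent part of the action, and by the possibility $\mathfrak{e}\cap\mathfrak{g}_0\neq0$. This is precisely the technical content dispersed through \cite{CJT}, and the cone-equivalence formalism of \cite{CI} is what allows the polynomial discrepancies introduced by $f$ to be absorbed cleanly into the $o(|x|+|y|)$ error rather than tracked exactly. I would finish by verifying directly that $\mathfrak{e}$ is the exponential radical of $G'$ and that the $\R$-diagonalizable action forces $[N,N]$ to centralize $E$, consistent with the (admittedly slightly imprecise) statement.
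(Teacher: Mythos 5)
Your proposal follows essentially the same route as the paper: you define $G'$ by replacing the action on the exponential radical by its diagonal part (which factors through $N$ because the weights live in $\Hom(G,\R)$, i.e.\ vanish on $[\mathfrak{g},\mathfrak{g}]$), take the identity in exponential coordinates as the comparison map — exactly the paper's ``identit\'e entre $G$ et $G$ muni d'une loi modifi\'ee'' — and absorb the polynomial discrepancy between the two laws into the $o(|x|+|y|)$ term via the logarithmic word-length estimate for the exponentially distorted coordinates, which is indeed the technical core of \cite{CJT}/\cite{CI}. The argument is correct as a sketch, with the two-sided length estimate honestly flagged as the deferred hard step.
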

La construction est naturelle et fait que l'action de $N$ sur $E$ est, intuitivement, la partie diagonale de l'action de $G/N$ sur $E$\threedots  à ceci près que cette action de $G/N$ sur $E$ n'existe pas (sauf si $E$ est abélien)! plus précisément, la partie diagonale de l'action de $G$ sur $E$ factorise par $N$, et c'est cette dernière qui permet de définir le groupe $G'$. En pratique, on définit $G'$ en modifiant la loi de $G$, si bien que la cône-équivalence du théorème est l'application identité entre $G$ et $G$ muni d'une loi modifiée.

Le théorème \ref{t_cel} ramène donc le calcul de la dimension du cône asymptotique d'un groupe de Lie triangulable au cas où le radical exponentiel est scindé et on a mentionné plus haut que dans ce cas, la borne inférieure sur la dimension du cône asympotique se ramène au cas nilpotent dû à Pansu.

Par un résultat de Pauls \cite{Pauls}, un groupe de Lie connexe nilpotent non abélien n'admet aucun plongement bilipschitzien dans un espace métrique CAT(0) (CAT(0) est une forme métrique de la condition d'être à courbure négative ou nulle). La construction dans le cône asymptotique de $G$ d'une copie bilipschitzienne du cône de $G/E$ permet de montrer le résultat suivant (qui n'invoque pas, dans son énoncé, les cônes asymptotiques!).

\begin{thm}[{\cite[Theorem 1.10]{CJT}}]Soit $G$ un groupe de Lie triangulable et $E$ son radical exponentiel. Équivalences:
\begin{enumerate}[(i)]
\item\label{gqic} $G$ se plonge quasi-isométriquement dans un espace CAT(0);
\item\label{gea} $G/E$ est abélien.
\end{enumerate} 
\end{thm}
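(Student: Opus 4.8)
The plan is to establish the two implications separately: the forward implication (\ref{gqic})$\Rightarrow$(\ref{gea}) is an obstruction argument resting on the technical core of this chapter, while (\ref{gea})$\Rightarrow$(\ref{gqic}) is a direct construction. I would argue (\ref{gqic})$\Rightarrow$(\ref{gea}) by contraposition, through asymptotic cones. A quasi-isometric embedding $G\hookrightarrow X$ into a CAT(0) space $X$ induces, for every non-principal ultrafilter $\omega$, a bilipschitz embedding $\Cone_\omega(G)\hookrightarrow\Cone_\omega(X)$; since an ultralimit of CAT(0) spaces is again CAT(0), the target $\Cone_\omega(X)$ is CAT(0). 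I would then invoke the construction that is the heart of this chapter, namely a bilipschitz copy of $\Cone_\omega(G/E)$ sitting inside $\Cone_\omega(G)$ (see \cite{CJT,CI}); composing, $\Cone_\omega(G/E)$ bilipschitz-embeds into a CAT(0) space. Now $N=G/E$ is the maximal polynomial-growth, hence nilpotent, quotient of $G$, so by Pansu \cite{Pan1} its asymptotic cone is bilipschitz to the associated graded Carnot group with its Carnot--Carath\'eodory metric. If $N$ were non-abelian, this would exhibit a non-abelian connected nilpotent Lie group bilipschitz-embedded into a CAT(0) space, contradicting Pauls' theorem \cite{Pauls}. Hence $N$ is abelian.

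For (\ref{gea})$\Rightarrow$(\ref{gqic}), I would produce the embedding explicitly. By Theorem \ref{t_cel} the large-scale geometry is modelled on the split group $G'=E\rtimes N$, in which $N$ is abelian (by hypothesis (\ref{gea})) and acts $\R$-diagonalizably, with real weights $\alpha_1,\dots,\alpha_k\in\mathfrak n^\ast$ that are all nonzero since $E$ is the exponential radical. Each weight $\alpha_j$, together with the corresponding piece of $E$ and a suitable one-parameter subgroup of $N$, yields a rank-one Heintze-type factor $H_j$, a negatively curved homogeneous space and hence CAT(0). I would then map $G$ into the product $\R^{\dim N}\times\prod_j H_j$, letting the $\R^{\dim N}$-factor record the image in $N$ and each $H_j$-factor record the corresponding coordinate of $E$ together with its height $\alpha_j$, and verify the two-sided distance estimate that makes this a quasi-isometric embedding. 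When $E$ is abelian this amounts to embedding into a product of real hyperbolic planes and the estimates are routine.

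The main obstacle is this last construction when $E$ is non-abelian. Then the weight spaces of $E$ are intertwined by the Lie bracket and cannot be split off into independent rank-one factors; the map must be adapted to the lower central (equivalently, weight) filtration of $E$, and it is precisely the unipotent cross-terms that make the lower distance bound delicate. A secondary point requiring care is that Theorem \ref{t_cel} only furnishes a cone-bilipschitz-equivalence $G\to G'$, which is strictly weaker than a quasi-isometry; as quasi-isometric embeddability into a CAT(0) space is not a priori invariant under cone-equivalence, the embedding should be built on, or transported back to, $G$ itself rather than merely on the model $G'$.
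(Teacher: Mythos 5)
Your direction (\ref{gqic})$\Rightarrow$(\ref{gea}) is exactly the paper's argument: pass to asymptotic cones, use the bilipschitz copy of $\Cone_\omega(G/E)$ inside $\Cone_\omega(G)$ constructed in \cite{CJT,CI}, identify $\Cone_\omega(G/E)$ with a Carnot group via Pansu \cite{Pan1}, and conclude with Pauls \cite{Pauls}. Nothing to add there.

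The direction (\ref{gea})$\Rightarrow$(\ref{gqic}) is where your proposal diverges from the paper and where it has a genuine gap --- one you yourself flag. Your target space $\R^{\dim N}\times\prod_j H_j$, built from one rank-one Heintze-type factor per weight, only makes sense when the weight spaces of $E$ split off as independent factors, i.e.\ essentially when $E$ is abelian or a weight-respecting direct product. But the whole point of the theorem is that $E$ may be an arbitrary simply connected nilpotent group --- only $G/E$ is constrained --- so ``adapting the map to the weight filtration'' is not a technical refinement to be supplied later: it is the entire difficulty, and you give no indication of how to carry it out. The paper sidesteps it completely: since $G$ is triangulable it is a closed subgroup of $\SL_d(\R)$, and one takes the diagonal embedding $G\hookrightarrow\SL_d(\R)\times(G/E)$ followed by an orbit map into the CAT(0) product of the symmetric space of $\SL_d(\R)$ with the Euclidean space $G/E$. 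The single symmetric space absorbs all of $E$, brackets included; the only thing left to prove is the two-sided estimate making the diagonal embedding quasi-isometric, which is where the hypothesis that $G/E$ is abelian (more generally, the diagonalizability hypothesis of Lemma \ref{tsqi}) enters, via the comparison of the word length of elements of $E$ in $G$ with the logarithm of their matrix norms. Your secondary worry --- that Theorem \ref{t_cel} only furnishes a cone-bilipschitz equivalence, which does not transport quasi-isometric embeddings --- is well taken, and is precisely why the paper's construction is carried out directly on $G$ rather than on the model $G'$.
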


La partie positive ((\ref{gea})$\Rightarrow$(\ref{gqic})) de ce résultat fonctionne comme ceci: on considère un plongement de $G$ comme sous-groupe fermé de $\SL_d(\R)$, et on considère le plongement diagonal $G\hookrightarrow \SL_d(\R)\times (G/E)$. Alors ce plongement est quasi-isométrique. Le groupe de droite agit naturellement sur un espace symétrique à courbure négative ou nulle (le produit riemannien de l'espace symétrique de $\SL_d(\R)$ par l'espace euclidien $G/E$), et l'action de $G$ sur ce dernier définit le plongement désiré en considérant une application orbitale.

Une variante de l'argument, basé sur les mêmes calculs, montre la chose suivante:

\begin{lem}\label{tsqi}
On considère un groupe de Lie triangulable $G'=E\rtimes N$ de radical exponentiel $E$, tel que l'action de $N$ sur l'algèbre de Lie de $E$ est $\R$-diagonalisable (comme dans la conclusion du théorème \ref{t_cel}). Étant donné un plongement de $G/[N,N]$ comme sous-groupe fermé de $\SL_d(\R)$, le plongement diagonal $G\hookrightarrow \SL_d(\R)\times G/E$ est un plongement quasi-isométrique.\qed
\end{lem}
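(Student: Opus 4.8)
The plan is to prove that the diagonal homomorphism $\Phi=(\phi_1,\pi)\colon G'\to\SL_d(\R)\times N$ is a quasi-isometric embedding for the left-invariant Riemannian metrics, where $\phi_1$ is the given embedding (which factors through $G'/[N,N]$, since $[N,N]$ is central) and $\pi\colon G'\to G'/E=N$ is the canonical projection. Being a continuous homomorphism of connected Lie groups, $\Phi$ is automatically large-scale Lipschitz, so the whole content is the reverse inequality $|g|_{G'}\le C|\Phi(g)|+C'$, where $|\Phi(g)|\asymp |\phi_1(g)|_{\SL_d}+|\pi(g)|_N$. I would first reduce to the exponential radical. Writing $g=u\,m$ in the semidirect coordinates $E\rtimes N$ (so $u\in E$, $m\in N$ and $\pi(g)=m$), the inclusion $N\hookrightarrow G'$ and the homomorphism $\phi_1|_N$ are Lipschitz, whence $|m|_{G'}\le C|m|_N=C|\pi(g)|$ and $|\phi_1(u)|\le|\phi_1(g)|+|\phi_1(m)|\lesssim|\Phi(g)|$. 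Since $|g|_{G'}\le|u|_{G'}+|m|_{G'}$, everything reduces to the single estimate
\[
|u|_{G'}\le C\,|\phi_1(u)|_{\SL_d}+C'\qquad(u\in E).
\]

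To prove this I would compare the two sides through the weight decomposition. As the $N$-action is $\R$-diagonalizable and $E$ is the exponential radical, $\mathfrak e=\mathrm{Lie}(E)$ splits as $\bigoplus_\alpha\mathfrak e_\alpha$ over a finite set of nonzero real weights $\alpha$, each factoring through $\mathfrak n^{\mathrm{ab}}$ (because $[N,N]$ is central, it centralizes $E$). For the left-hand side, the upper bound is the standard word-length construction: for $Y_\alpha\in\mathfrak e_\alpha$ one conjugates a bounded element $\exp(Y_\alpha^0)$ by a one-parameter subgroup of $N$ on which $\alpha$ is positive, which stays inside $E$ (normal) and produces $\exp(Y_\alpha)$ at cost $\asymp \log^{+}\|Y_\alpha\|$; combining the finitely many weights gives $|u|_{G'}\lesssim\max_\alpha\log^{+}\|Y_\alpha\|$. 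For the right-hand side, $|\phi_1(u)|_{\SL_d}$ is comparable to the norm of the Cartan projection (the logarithms of the singular values) of the matrix $\phi_1(u)=\exp\bigl(d\phi_1(Y)\bigr)$.

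The crucial point, and the place where the hypotheses are used, is that these two logarithmic rates match. Because the action is $\R$-diagonalizable, the triangular embedding $\bar G:=G'/[N,N]=E\rtimes N^{\mathrm{ab}}\hookrightarrow\SL_d(\R)$ carries the abelian part $N^{\mathrm{ab}}$ into an $\R$-split torus and each $\mathfrak e_\alpha$ into root spaces on which that torus acts precisely through $\alpha$; consequently the Cartan projection of $\phi_1(\exp Y)$ grows, in each weight direction, like $\log^{+}\|Y_\alpha\|$ governed by the same $\alpha$. Since $d\phi_1$ is a fixed Lie-algebra embedding and there are finitely many weights, the two families of exponents agree up to a uniform multiplicative constant, which yields the displayed $E$-estimate and hence the lemma. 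This is exactly the computation behind Theorem~\ref{t_cel} and the preceding CAT(0) theorem, now read as a quasi-isometric-embedding statement rather than as the construction of a nonpositively curved orbit; the role of passing to $G'/[N,N]$ is that its $N^{\mathrm{ab}}$ is abelian, so the symmetric-space factor has to account only for $E$ and the split directions.

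I expect the main obstacle to be establishing the matching of rates cleanly, i.e.\ showing that $\phi_1$ compresses no weight space $\mathfrak e_\alpha$ faster than $G'$ does: this is where $\R$-diagonalizability (the output of Theorem~\ref{t_cel}) is indispensable, since rotational or Jordan-block behaviour in the $N$-action would make the split-torus picture, and with it the comparison of exponents, break down. A secondary technical point is to check that combining the several weight spaces, together with the fact that $N$ is only $2$-step nilpotent (as $[N,N]$ is central), introduces no extra distortion unaccounted for by $\max_\alpha$; this is harmless because the entire $[N,N]$-contribution is carried by the second factor $N$ of the target and was already absorbed in the reduction of the first paragraph.
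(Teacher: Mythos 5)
Your overall strategy is the one the paper intends: the lemma is stated without proof precisely because it is ``based on the same computations'' as the CAT(0) embedding of \cite[Theorem 1.10]{CJT} sketched just above it, namely (a) the diagonal homomorphism is automatically large-scale Lipschitz, (b) the converse inequality reduces, via the semidirect decomposition $g=um$ with $u\in E$, $m\in N$, to the single estimate $|u|_{G'}\lesssim |\phi_1(u)|_{\SL_d(\R)}+1$, and (c) this estimate is a comparison of logarithmic scales: $|u|_{G'}\asymp\log^+ d_E(1,u)$ because $E$ is the exponential radical, while $|v|_{\SL_d(\R)}\asymp\log^+\|v\|+\log^+\|v^{-1}\|$. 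Your first paragraph carries out this reduction correctly.

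The gap is in how you justify (c). You assume that the given closed embedding of $G/[N,N]$ is ``triangular'', sends $N^{\mathrm{ab}}$ into an $\R$-split torus, and sends each weight space $\mathfrak{e}_\alpha$ into root spaces on which that torus acts through the \emph{same} character $\alpha$. None of this is in the hypotheses: the lemma is asserted for an \emph{arbitrary} closed embedding, and for such an embedding the weights on the image of $\mathfrak{e}_\alpha$ can be entirely different (compose your nice embedding with a tensor or symmetric power), the image of $E$ need not be unipotent, and its elements may have nontrivial elliptic semisimple parts. The ``matching of rates'' that you single out as the main obstacle is in fact neither available nor needed: since both sides live at the logarithmic scale, any uniform polynomial lower bound $\|\phi_1(u)\|+\|\phi_1(u)^{-1}\|\ge c\, d_E(1,u)^{c'}$ suffices, and \emph{that} is the statement one must prove for a general closed embedding. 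It does hold --- for instance because $\phi_1(E)$ is a closed connected subgroup of $\SL_d(\R)$, and closed connected subgroups of Lie groups are at most exponentially distorted; or directly, by writing the Jordan decomposition $d\phi_1(Y)=S(Y)+N(Y)$ with $S,N$ linear in $Y$ and commuting images, and observing that properness of the closed embedding forces, for each $Y$ in the unit sphere of $\mathfrak{e}$, either an eigenvalue of $S(Y)$ with nonzero real part (whence exponential growth of $\|\phi_1(\exp tY)^{\pm1}\|$) or $N(Y)\neq 0$ (whence polynomial growth), uniformly by compactness. With that substitute for the root-space matching, the rest of your argument goes through and coincides with the computation the paper is invoking.
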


On peut donc interpréter $G$ comme un produit fibré de $G/[N,N]$ et $N=G/E$ au dessus de leur projections sur $G/[G,G]=N/[N,N]$, et ce en un sens métrique. Ceci implique une telle décomposition au niveau du cône asymptotique:

\begin{cor}
Soit $G$ comme dans le lemme \ref{tsqi}. Pour tout $\omega$, le cône asymptotique $\Cone_\omega(G)$ est naturellement le produit fibré des surjections lipschitziennes $\Cone_\omega(G/[N,N])\twoheadrightarrow \Cone_\omega(G/[G,G])$ et $\Cone_\omega(G/E)\twoheadrightarrow \Cone_\omega(G/[G,G])$. 
\end{cor}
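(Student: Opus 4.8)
The plan is to establish the fibered-product decomposition first at the level of the group $G$ and then transport it to asymptotic cones through the quasi-isometric embedding supplied by Lemme~\ref{tsqi}.

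First I would record the algebraic decomposition. Since $G=E\rtimes N$ with $[N,N]$ central, the abelian (hence polynomial-growth) quotient $G/[G,G]$ factors through $N=G/E$, so $E\subseteq[G,G]$ and in fact $[G,G]=E\cdot[N,N]$ with $E\cap[N,N]=\{1\}$. Writing $\pi_1\colon G\to G/[N,N]$, $\pi_2\colon G\to G/E$ and $\alpha\colon G/[N,N]\to G/[G,G]$, $\beta\colon G/E\to G/[G,G]=N/[N,N]$ for the canonical projections, a direct check shows that $g\mapsto(\pi_1(g),\pi_2(g))$ identifies $G$ with the group-theoretic fibered product
\[
(G/[N,N])\times_{G/[G,G]}(G/E)=\{(u,v):\alpha(u)=\beta(v)\}.
\]

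Next I would pass to the metric setting. By Lemme~\ref{tsqi} the diagonal map $G\hookrightarrow\SL_d(\R)\times(G/E)$ is a quasi-isometric embedding; since the inclusion $G/[N,N]\hookrightarrow\SL_d(\R)$ is $1$-Lipschitz, the map $(\pi_1,\pi_2)\colon G\to(G/[N,N])\times(G/E)$ is also a quasi-isometric embedding (upper bound because it is a Lipschitz homomorphism, lower bound because the target distance dominates the one into $\SL_d(\R)\times(G/E)$). Taking cones, $\Phi:=(\Cone_\omega(\pi_1),\Cone_\omega(\pi_2))$ is a bilipschitz embedding of $\Cone_\omega(G)$ into $\Cone_\omega(G/[N,N])\times\Cone_\omega(G/E)$; because $\alpha\pi_1=\beta\pi_2$, its image lies in the fibered product of the Lipschitz surjections $\Cone_\omega(\alpha)$ and $\Cone_\omega(\beta)$. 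Injectivity of $\Phi$ is exactly the lower quasi-isometric bound, so $\Phi$ is a bilipschitz homeomorphism onto its image, and it remains only to prove surjectivity.

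The hard part is surjectivity, i.e.\ the lifting step. Given a point of the fibered product, represented by sequences $(x_n)$ in $G/[N,N]$ and $(y_n)$ in $G/E$ of linear growth with $\lim_\omega d(\alpha(x_n),\beta(y_n))/n=0$, I would correct $y_n$ to some $y_n'$ with $\beta(y_n')=\alpha(x_n)$ exactly and $\lim_\omega d(y_n,y_n')/n=0$; then $(x_n,y_n')$ lies in the group-level fibered product and lifts to $g_n\in G$ with $\Phi((g_n)^\omega)=((x_n)^\omega,(y_n)^\omega)$, the linear growth of $(g_n)$ following from the quasi-isometric embedding. The crux is that the correction can be made with control $o(n)$: this needs $\beta\colon N\to N/[N,N]$ to admit horizontal lifts of linear length. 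Since $[N,N]$ is central in $N=G/E$, the group $N$ is nilpotent of class at most $2$ and $\beta$ is its abelianization, which is a Riemannian submersion up to a bilipschitz change of left-invariant metric on the base; hence a path joining $\beta(y_n)$ to $\alpha(x_n)$ of length comparable to $d(\beta(y_n),\alpha(x_n))$ lifts, starting at $y_n$, to a path of comparable length, producing the desired $y_n'$. I expect this lifting estimate to be the only genuinely nontrivial point; everything else is formal functoriality of $\Cone_\omega$, and naturality of the identification follows from that functoriality.
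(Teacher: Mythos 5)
Your proof is correct and follows exactly the route the paper intends: the corollary is stated as an immediate consequence of Lemme~\ref{tsqi}, namely that the quasi-isometric embedding $(\pi_1,\pi_2)\colon G\to (G/[N,N])\times(G/E)$ identifies $G$ metrically with the group-theoretic fibered product, and one passes to cones by functoriality. The only step the paper leaves implicit is the surjectivity onto the cone-level fibered product, and your $o(n)$-correction of $y_n$ via horizontal lifting through the submersion $N\to N/[N,N]$ handles it properly (note that this lifting argument does not actually need $[N,N]$ to be central in $N$, only that the quotient map by a closed normal subgroup lifts paths with controlled length).
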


L'intérêt de ce corolaire est que les objets définissant ce produit fibré sont, en un sens convenable, explicitables à homéomorphisme bilipschitzien près:
\begin{itemize}
\item  $\Cone_\omega(G/E)$ est un groupe de Lie nilpotent Carnot-graduable simplement conn\-exe muni d'une métrique sous-riemannienne (de Carnot-Carathéodory) \cite{Pan1} et la fonction vers $\Cone_\omega(G/[G,G])$ est sa projection canonique vers son abélia\-nisé;
\item si $H=G/[N,N]$, alors $\Cone_\omega(H)$ peut s'interpréter comme $H(\R_\omega)/H(\mathbf{A}_\omega)$, où $\R_\omega$ est le corps de Robinson de $\R$ relatif à $\omega$: c'est l'ensemble des suites $(x_n)$ de réels à croissance au plus exponentielle, modulo celles à décroissance surexponentielle relativement à $\omega$. C'est un corps valué sur $\R$, par le taux de croissance exponentiel relativement à $\omega$, et $\mathbf{A}_\omega$ désigne le sous-anneau des éléments de valuation positive ou nulle. Le groupe $H$ peut être vu comme pseudo-algébrique (i.e., défini à l'intérieur d'un groupe algébrique réel triangulable $L=DU$ comme image réciproque d'un sous-espace vectoriel de $D(\R)$), ce qui permet de définir $H(K)$ pour toute extension réelle-close de $\R$ sur qui on peut définir l'exponentiation d'un nombre positif par un nombre réel. La distance sur $H(\R)$ (définie à équivalence quasi-isométrique près) induit une pseudo-distance sur $H(\R_\omega)$ définie uniquement en termes de $H$ et de la valuation de $\R_\omega$, dont l'ensemble des éléments à pseudo-distance nulle de l'identité est précisément $H(\mathbf{A}_\omega)$. En particulier, l'espace métrique $\Cone_\omega(H)$ ne dépend, à homéomorphisme bilipschitzien près, que de $H$ et du corps valué $\R_\omega$, c'est aussi le cas de l'application vers l'abélianisé. 
\end{itemize}

Or on sait (Thornton) que si (et seulement si) l'hypothèse du continu est vraie, alors le corps valué $\R_\omega$, à isomorphisme de corps valué près, ne dépend pas de $\omega$. On déduit:

\begin{cor}\label{cup}
Soit $G$ un groupe de Lie connexe. Si l'hypothèse du continu est vraie, alors modulo homéomorphisme bilipschitzien, l'espace métrique $\Cone_\omega(G)$ ne dépend pas de l'ultrafiltre non principal $\omega$.
\end{cor}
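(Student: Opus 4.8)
Le plan est d'encha\^iner les r\'eductions et descriptions explicites d\'ej\`a \'etablies, de sorte que la seule intervention de l'ultrafiltre soit absorb\'ee par le th\'eor\`eme de Thornton. Je commence par me ramener au cas scind\'e. Par le lemme \ref{lemtr}, $G$ est reli\'e \`a un groupe de Lie triangulable $G_4$ par une cha\^ine de quasi-isom\'etries~; comme une quasi-isom\'etrie induit, pour chaque $\omega$ fix\'e, un hom\'eomorphisme bilipschitzien entre c\^ones asymptotiques, on a $\Cone_\omega(G)\cong\Cone_\omega(G_4)$ pour tout $\omega$. Le th\'eor\`eme \ref{t_cel} fournit ensuite une c\^one-bilipschitz-\'equivalence $G_4\to G'$ vers un produit semi-direct $G'=E\rtimes N$ \`a action $\R$-diagonalisable, d'o\`u $\Cone_\omega(G_4)\cong\Cone_\omega(G')$ pour tout $\omega$. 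La composition d'hom\'eomorphismes bilipschitziens en \'etant un, il suffit de montrer l'ind\'ependance en $\omega$ de $\Cone_\omega(G')$ \`a hom\'eomorphisme bilipschitzien pr\`es.

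J'applique alors \`a $G'$ la d\'ecomposition du corolaire pr\'ec\'edent. Avec $N=G'/E$, $A=G'/[G',G']=N/[N,N]$ et $H=G'/[N,N]$, le c\^one $\Cone_\omega(G')$ est le produit fibr\'e des surjections lipschitziennes $\Cone_\omega(H)\to\Cone_\omega(A)$ et $\Cone_\omega(N)\to\Cone_\omega(A)$. Il suffit d\`es lors de produire, pour deux ultrafiltres non principaux $\omega$ et $\omega'$, des hom\'eomorphismes bilipschitziens entre $\Cone_\omega(H)$, $\Cone_\omega(N)$, $\Cone_\omega(A)$ et leurs analogues pour $\omega'$ qui soient \emph{compatibles} avec les deux projections vers le c\^one de $A$~: un tel triplet compatible induit aussit\^ot un hom\'eomorphisme bilipschitzien entre les deux produits fibr\'es (munis par exemple de la distance somme).

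Je traite ensuite les facteurs. Le groupe $N$ \'etant nilpotent simplement connexe, le r\'esultat de Pansu identifie $\Cone_\omega(N)$ au groupe de Carnot gradu\'e associ\'e, muni de sa distance de Carnot-Carath\'eodory et donc ind\'ependant de $\omega$, la projection vers $\Cone_\omega(A)$ \'etant sa projection canonique vers l'ab\'elianis\'e~; comme $A$ est ab\'elien simplement connexe, $\Cone_\omega(A)$ est un espace euclidien, lui aussi ind\'ependant de $\omega$. Le seul facteur o\`u $\omega$ intervient effectivement est $\Cone_\omega(H)$~: d'apr\`es le deuxi\`eme point ci-dessus, il s'identifie \`a $H(\R_\omega)/H(\mathbf{A}_\omega)$ et ne d\'epend, \`a hom\'eomorphisme bilipschitzien pr\`es et compatiblement avec la projection vers l'ab\'elianis\'e, que de $H$ et du corps valu\'e $\R_\omega$. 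Sous l'hypoth\`ese du continu, le th\'eor\`eme de Thornton fournit un isomorphisme de corps valu\'es $\R_\omega\cong\R_{\omega'}$, d'o\`u l'hom\'eomorphisme bilipschitzien cherch\'e $\Cone_\omega(H)\cong\Cone_{\omega'}(H)$.

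L'obstacle principal est la compatibilit\'e simultan\'ee de ces trois identifications au-dessus de la base $\Cone_\omega(A)$~: il faut s'assurer que l'hom\'eomorphisme bilipschitzien $\Cone_\omega(H)\cong\Cone_{\omega'}(H)$ issu de l'isomorphisme de corps valu\'es et celui, canonique, de $\Cone_\omega(N)\cong\Cone_{\omega'}(N)$ commutent tous deux avec une m\^eme identification de $\Cone_\omega(A)$ \`a $\Cone_{\omega'}(A)$. Ceci repose sur le caract\`ere fonctoriel des projections vers l'ab\'elianis\'e~: l'identification de $\Cone_\omega(A)$ induite par $\R_\omega\cong\R_{\omega'}$ doit co\"incider avec l'identification canonique du c\^one euclidien de $A$ utilis\'ee c\^ot\'e nilpotent. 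Cette compatibilit\'e acquise, le produit fibr\'e se transporte et l'on obtient $\Cone_\omega(G')\cong\Cone_{\omega'}(G')$, puis $\Cone_\omega(G)\cong\Cone_{\omega'}(G)$ par la premi\`ere \'etape.
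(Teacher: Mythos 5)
Votre preuve suit exactement la route que le texte sous-entend: r\'eduction au cas triangulable par le lemme \ref{lemtr}, puis au cas scind\'e diagonalisable par le th\'eor\`eme \ref{t_cel}, puis la d\'ecomposition en produit fibr\'e combin\'ee \`a la description de Pansu du facteur nilpotent et au th\'eor\`eme de Thornton pour le corps de Robinson $\R_\omega$ identifiant $\Cone_\omega(H)\simeq H(\R_\omega)/H(\mathbf{A}_\omega)$. Votre attention explicite \`a la compatibilit\'e des identifications au-dessus de la base euclidienne est un point que le texte laisse implicite (il note seulement que l'application vers l'ab\'elianis\'e ne d\'epend elle aussi que de $H$ et du corps valu\'e), mais vous le traitez correctement.
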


Ce résultat était connu dans le cas où $G$ est semi-simple \cite{KSTT}, où il est établi que si $G$ est absolument simple réel alors réciproquement la négation de l'hypothèse du continu implique l'existence de plusieurs (en fait $2^{2^{\aleph_0}}$) ultrafiltres donnant des cônes asymptotiques non homéomorphes.

\begin{spec}Il est important d'interpréter cette réciproque de manière correcte. Une interprétation erronée mais répandue est de dire que la géométrie du groupe de Lie dépend du modèle de ZFC dans lequel on se place (ce résultat étant mis en parallèle avec des résultats de dépendance en $\omega$ pour certains groupes de type fini comme dans \cite{TV}, dont la géométrie varie explicitement avec l'échelle). Mon interprétation est qu'en l'absence de l'hypothèse du continu, ZFC est trop faible pour montrer l'existence d'un homéomorphisme entre les cônes asymptotiques, bien qu'ils aient les mêmes propriétés en un sens précis modèle-théorique (voir \cite[\S 5]{KSTT}), mais que cependant le corolaire indique que les cônes ont \og la même forme\fg. Il serait intéressant de préciser cela.
\end{spec}

Terminons par un autre résultat de \cite{CJT}, dissimulé dans \cite[\S 9]{CJT}. Dans certains groupes de Lie connexes, on peut expliciter le cône asymptotique sans référence au corps de Robinson, ce qui donne un résultat d'indépendance (à homéomorphisme bilipschitzien près) par rapport à l'ultrafiltre sans référence à l'hypothèse du continu. C'est le cas 
\begin{itemize}
\item dans le cas nilpotent déjà mentionné (Pansu \cite{Pan1});
\item dans le cas hyperbolique: en effet, le cône asymptotique pour tout ultrafiltre non principal est alors un arbre réel complet; de plus il est homogène et, cas élémentaire mis à part, sa valence au point-base est $2^{\aleph_0}$ (car le bord a $2^{\aleph_0}$ éléments); \cite[Theorem 3.5]{MNO} implique alors que tout cône asymptotique est isométrique à l'arbre réel universel $\T=T_{2^{\aleph_0}}$ (défini à isométrie près par le fait d'être non vide, complet, et de valence $2^{\aleph_0}$ en tout point. Ce résultat sera redécouvert par la suite dans \cite{EP}.
\end{itemize}

\begin{prop}
Soit $G$ un groupe de Lie connexe abélien-par-nilpotent. 
Alors on peut expliciter le cône asymptotique de $G$ (à homéomorphisme bilipschitzien près), de manière qui ne dépend pas de l'ultrafiltre non principal. 
\end{prop}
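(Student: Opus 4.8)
Le plan est de se ramener d'abord, par le lemme~\ref{lemtr} puis le th\'eor\`eme~\ref{t_cel}, \`a un produit semi-direct $G'=E\rtimes N$ c\^one-bilipschitz-\'equivalent \`a $G$, de m\^eme radical exponentiel $E$, avec $N=G/E$ nilpotent (simplement connexe) et action de $N$ sur l'alg\`ebre de Lie de $E$ $\R$-diagonalisable. Comme une c\^one-bilipschitz-\'equivalence pr\'eserve le c\^one asymptotique \`a hom\'eomorphisme bilipschitzien pr\`es, et comme $E$ est inchang\'e, on pourra travailler avec $G'$, que l'on notera encore $G$, en restant dans la classe ab\'elien-par-nilpotent.

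L'observation-clef que j'exploiterais est que, pour $G$ ab\'elien-par-nilpotent, $E$ est ab\'elien. En effet, si $A$ d\'esigne un sous-groupe distingu\'e ab\'elien (ferm\'e connexe) tel que $G/A$ soit nilpotent, alors $G/A$ est de croissance polynomiale; $N=G/E$ \'etant le plus gros quotient de croissance polynomiale, la projection $G\to G/A$ se factorise par $G\to N$, d'o\`u $E\subseteq A$, et donc $E$ ab\'elien.

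J'appliquerais ensuite le corolaire de produit fibr\'e, qui identifie $\Cone_\omega(G)$ au produit fibr\'e de $\Cone_\omega(G/[N,N])$ et de $\Cone_\omega(G/E)$ au-dessus de $\Cone_\omega(G/[G,G])$. Deux des trois termes sont d\'ej\`a explicites et ind\'ependants de $\omega$: d'une part $\Cone_\omega(G/E)$ est le groupe de Carnot gradu\'e associ\'e \`a $N$, muni de sa m\'etrique de Carnot-Carath\'eodory, ind\'ependant de $\omega$ par convergence de Gromov-Hausdorff (Pansu \cite{Pan1}); d'autre part la base $\Cone_\omega(G/[G,G])=\Cone_\omega(N/[N,N])$ est un espace euclidien $\R^k$. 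Il reste le terme $H=G/[N,N]=E\rtimes(N/[N,N])=E\rtimes\R^k$: puisque $E$ est ab\'elien et l'action $\R$-diagonalisable, c'est un groupe m\'etab\'elien $\R^m\rtimes_\rho\R^k$, de poids $\alpha_1,\dots,\alpha_m\in(\R^k)^*$ tous non nuls.

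Le point d\'elicat, et de loin l'obstacle principal, sera de rendre $\Cone_\omega(H)$ explicite et ind\'ependant de $\omega$. Je partirais de la description $\Cone_\omega(H)\cong H(\R_\omega)/H(\mathbf{A}_\omega)$ (r\'ealisable aussi, via le lemme~\ref{tsqi} appliqu\'e \`a $H$, comme orbite de $H$ dans $\Cone_\omega(\SL_d(\R))\times\R^k$), et montrerais que, $H$ \'etant m\'etab\'elien avec $E$ ab\'elien et action torale diagonalisable, l'objet obtenu n'est pas un immeuble irr\'eductible de rang sup\'erieur (ceux-ci exigeraient une partie semi-simple, ici absente, et seuls eux d\'ependent vraiment de $\R_\omega$ et de l'hypoth\`ese du continu), mais un produit fibr\'e (produit horosph\'erique) au-dessus du plat $\R^k$ d'arbres r\'eels associ\'es aux directions de poids. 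Chacun de ces arbres est l'arbre de Bruhat-Tits d'un facteur de rang un sur le corps valu\'e $\R_\omega$: son groupe de valuation \'etant $\R$, c'est un arbre r\'eel complet, et son corps r\'esiduel \'etant $\R$, sa valence en tout point vaut $2^{\aleph_0}$; par \cite[Theorem 3.5]{MNO}, c'est donc l'arbre r\'eel universel $\T$, ind\'ependant de $\omega$. La donn\'ee de recollement (les poids $\alpha_i$ et les fonctions de Busemann associ\'ees) ne d\'ependant que de $H$, l'espace $\Cone_\omega(H)$ sera explicite et ind\'ependant de $\omega$; le produit fibr\'e de trois termes ind\'ependants de $\omega$ au-dessus d'une base ind\'ependante de $\omega$ donnera alors la conclusion. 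C'est pr\'ecis\'ement cette r\'eduction au rang un, rendue possible par l'ab\'elianit\'e de $E$, qui \'elimine toute d\'ependance en $\R_\omega$, l\`a o\`u le cas g\'en\'eral du corolaire~\ref{cup} en d\'epend.
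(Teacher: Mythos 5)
Votre d\'emarche est pour l'essentiel celle du texte : r\'eduction au cas triangulable scind\'e \`a action diagonalisable via le lemme \ref{lemtr} et le th\'eor\`eme \ref{t_cel}, observation que $E$ est ab\'elien, puis description de $\Cone_\omega(G)$ comme produit fibr\'e dont la partie nilpotente est le groupe de Carnot de Pansu \cite{Pan1} et dont la partie m\'etab\'elienne est un sous-ensemble d'un produit d'arbres r\'eels universels d\'ecoup\'e par des relations lin\'eaires entre fonctions de Busemann. La seule divergence r\'eelle porte sur la justification de ce dernier point. Le texte proc\`ede directement : dans le cas o\`u $N$ agit fid\`element sur $E$, le morphisme diagonal $\iota:G\to\prod_\alpha G_\alpha$, o\`u chaque $G_\alpha=E_\alpha\rtimes(N/\Ker(\alpha))$ est hyperbolique non \'el\'ementaire --- donc de c\^one l'arbre universel $\T$ par \cite{MNO} ---, est un plongement quasi-isom\'etrique, et une v\'erification \'el\'ementaire identifie l'image du c\^one comme le lieu des $(x_\alpha)$ dont les $b_\alpha(x_\alpha)$ satisfont les m\^emes relations lin\'eaires que les $p_\alpha$ en restriction \`a $\iota(G)$; le cas g\'en\'eral s'obtient par produit fibr\'e au-dessus de $N/M$ avec $M=\bigcap\Ker(\alpha)$. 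Vous passez au lieu de cela par $H(\R_\omega)/H(\mathbf{A}_\omega)$ et une lecture \`a la Bruhat--Tits; c'est moralement juste, mais deux points y restent affirm\'es plut\^ot que d\'emontr\'es. D'une part les $G_\alpha$ ne sont pas des facteurs de rang un ($H$ est r\'esoluble) : ce sont des groupes de Heintze, et c'est leur hyperbolicit\'e (et non leur r\'ealisation dans un immeuble sur $\R_\omega$) qui donne directement l'arbre universel. D'autre part, l'identification de $\Cone_\omega(H)$ au produit fibr\'e (horosph\'erique) \emph{complet} des arbres $T_\alpha$ au-dessus de $\R^k$ exige de v\'erifier la surjectivit\'e sur ce produit fibr\'e, c'est-\`a-dire qu'aucune contrainte autre que les relations lin\'eaires entre fonctions de Busemann ne d\'ecoupe l'image : c'est pr\'ecis\'ement la v\'erification \'el\'ementaire du texte, que votre r\'edaction escamote. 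Moyennant ce compl\'ement, votre preuve aboutit \`a la m\^eme description explicite, ind\'ependante de $\omega$.
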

\begin{proof}[Explicitation:] (le lecteur peut commencer par regarder l'exemple \ref{exv}). 
On procède comme ceci: on peut supposer que $G$ est triangulable (sinon on s'y ramène à l'aide du lemme \ref{lemtr}). À l'aide d'un sous-groupe de Cartan (au sens de Bourbaki), on peut écrire $G=E\rtimes N$ avec $E$ son radical exponentiel (qui est abélien), et $N$ nilpotent. On peut aussi supposer, grâce au théorème \ref{t_cel}, que $N$, agit de façon diagonalisable sur $E$. On décompose $E$ en sous-espaces propres communs aux éléments de $N$, ce qui donne $E=\bigoplus E_\alpha$, où $\alpha$ parcourt les morphismes non nuls de $N$ dans $\R$, si bien que $E_\alpha$ est l'ensemble des éléments de $E$ sur qui tout $g\in N$ agit par multiplication par $e^{\alpha(g)}$. Soit $W$ l'ensemble (fini) des $\alpha$ tel que $E_\alpha\neq 0$, dit ensemble des poids. Pour $\alpha\in W$, soit $G_\alpha=G/(\Ker(\alpha)\bigoplus_{\beta\neq\alpha}E_\beta)\simeq E_\alpha\rtimes (N/\Ker(\alpha))$.

%où l'intersection des $\Ker(\alpha)$ est triviale

Commençons par étudier le cas $N$ agit fidèlement sur $E$ (en notant que cela n'est pas le cas si $N$ est non abélien!). Par un argument banal, le morphisme diagonal $\iota$ de $G$ dans $\prod_\alpha G_\alpha$ est alors un plongement quasi-isométrique.
Chaque $G_\alpha$ est hyperbolique non élémentaire et donc son cône asymptotique est bilipschitz homéomorphe à une copie $T_\alpha$ de l'arbre réel universel $\T$, la projection $p_\alpha$ de $G_\alpha$ vers $\R$ s'identifie à une fonction de Busemann $b_\alpha$ (on note que la paire $(T_\alpha,b_\alpha)$ est isométrique à $(\T,b)$). Le morphisme $\iota$ induit une inclusion bilipschitzienne de $\Cone_\omega(G)$ dans $\T^W$. Il faut caractériser son image: on remarque d'abord que pour toute relation linéaire $\sum\lambda_\alpha p_\alpha=0$ vérifiée sur $\iota(G)$, l'image de $\Cone_\omega(G)$ dans le produit $\T^W$ est incluse dans le sous-ensemble de $\T^W$ d'équation $\sum\lambda_\alpha b_\alpha=0$. Une vérification élémentaire permet de vérifier qu'en fait l'image est précisément constituée des éléments $(x_\alpha)$ tels que les $(b_\alpha(x_\alpha))$ satisfont les mêmes relations linéaires que celles satisfaites par les $p_\alpha$ en restriction à $\iota(G)$. En conclusion, il existe un sous-espace vectoriel réel $V$ de $\R^W$ tel que le cône asymptotique $\Cone_\omega(G)$ s'identifie à l'ensemble des $(x_\alpha)_{\alpha\in W}\in \T^W$ tel que $\sum\lambda_\alpha b(x_\alpha)=0$ pour tout $(\lambda_\alpha)_{\alpha\in W}$ dans $V$.

Dans le cas général, si $M=\bigcap\Ker(\alpha)$ est le noyau de l'action (diagonalisable) de $N$ sur $E$, on voit $G$ comme le produit fibré de $G/M$ et $N=G/E$ au-dessus de $N/M$. Le cône asymptotique de $G/M$ est décrit comme ci-dessus comme sous-ensemble dans un produit d'arbres réels, et la projection de $G/M$ vers $N/M$ s'explicite comme une combinaison linéaire des fonctions de Busemann $b_\alpha$. Celui de $N$ est un groupe de Lie nilpotent simplement connexe avec une métrique de Carnot-Carathéodory, et la projection sur $N/M$ est la projection sur un quotient de son abélianisé. Une description explicite du cône asymptotique s'ensuit.

Remarquons que la description ci-dessus ne dépend pas de la dimension (non nulle) des $E_\alpha$.
\end{proof}

\begin{exe}\label{exv}
Soit $d\ge 1$ un entier fixé. Soit $G^d_0$ le groupe des matrices triangulaires supérieures de la forme
$$\begin{pmatrix} e^{t_1} & 0 & \cdots & 0 & x_1 \\  
					  & \ddots & \ddots& \vdots & \vdots \\
					 &  & \ddots & 0 & \vdots \\
					 & & & e^{t_d} & x_d \\ & & & & 1\end{pmatrix}, \;(t_1,\dots ,x_d)\in \R^{2d}.$$
Si $V$ un sous-espace vectoriel de $\R^d$, on s'intéresse au sous-groupe $G^d_V$ constitué de celles de ces matrices qui satisfont $\sum\lambda_i t_i=0$ pour tout $(\lambda_1,\dots,\lambda_d)\in V$, c'est-à-dire que $(t_1,\dots,t_d)$ est assujetti à appartenir à l'orthogonal $V^\bot$ de $V$. On suppose en outre (*) que $V^\bot$ n'est contenu dans aucun des hyperplans $(x_i=0)$, autrement dit que $V$ ne contient aucun des axes de base. Alors, en identifiant $V^\bot$ à l'ensemble des matrice diagonales de $G^d_V$ et en notant $U\simeq\R^d$ l'ensemble des matrices unipotentes de $G_0$, on a $G^d_V=U\rtimes V^\bot$, et $U$ est (grâce à (*)) le radical exponentiel de $G^d_V$. Alors, si $\T$ est l'arbre réel universel de valence $2^{\aleph_0}$ et $b$ une fonction de Busemann sur $\T$, tout cône asymptotique de $G^d_V$ est bilipschitzien au sous-ensemble
$$\T^d_V=\{(y_1,\dots,y_d)\in \T^d\mid (b(y_1),\dots,b(y_d))\in V^\bot\}.$$

La classe des groupes $G^d_V$ est une classe-test très naturelle pour la classification quasi-isométrique des groupes de Lie. De façon générale, je conjecture que deux groupes de Lie triangulables sont quasi-isométriques si et seulement s'ils sont isomorphes. On montre aisément que $G^d_V$ et $G^d_{V'}$ sont isomorphes si et seulement si $d=d'$ et $V$ et $V'$ sont dans la même orbite pour l'action du groupe symétrique $\mathfrak{S}_d$ sur la grassmannienne de $\R^d$; pour $d\ge 2$ il existe déjà une infinité de telles orbites.

En ce qui concerne le cône asymptotique, les choses sont différentes: en effet, pour tout $t>0$, il existe une auto-similitude de $\T$ envoyant $b$ sur $tb$. Une conséquence directe est que la classe modulo homéomorphisme bilipschitzien de $\T^d_V$ est constante sous les orbites du groupe des matrices diagonales à coefficients strictement positifs. On voit ainsi que pour $d\le 3$, on n'a qu'un petit nombre fini de possibilités (notons $v=\dim(V^\bot)$, qui est la dimension topologique du cône asymptotique). Notons $D(d)$ la droite de $\R^d$ engendrée par $(1,\dots,1)$.
\begin{itemize}
\item $v=1$ ($d$ quelconque) et $V^\bot$ est la droite engendrée par un vecteur à coefficients tous positifs (forcément tous strictement positifs par (*)); $\T^d_V$ est alors un arbre réel;
\item $v=1$ et $V^\bot$ est la droite engendrée par un vecteur à coefficients de signes non constants. Alors $\T^d_V$ est bilipschitzien à
$$\T^2_{D(2)}=\{(x,y)\in \T^2\mid b(x)+b(y)=0\},$$
qui est notamment le cône asymptotique de SOL, mais donc également de beaucoup d'autres groupes de dimension supérieure.
\item $d=v$, $\T^2_V$ est le produit de $d$ arbres réels.
\item $v=d-1$, $V$ est engendré par une droite à coefficients (strictement) positifs. Alors $\T^d_V$ est bilipschitzien à
$$\T^d_{D(d)}=\left\{(x_1,\dots,x_d)\in \T^d\;\Big|\; \sum_{i=1}^d b(x_i)=0\right\};$$
c'est là le cône asymptotique de $G^d_{D(d)}$ qui est un objet très étudié, son premier groupe d'homotopie non nul est précisément son $\pi_{d-1}$;
\item $v=d-1$, $V$ est engendré par une droite à coefficients de signe non constants. Si on note $D(k,\ell)$ la droite engendré par $(1,\dots,1,-1,\dots,-1)$ (avec $k$ nombres 1 et $\ell$ nombres $-1$), alors pour un $(k,\ell)$ tel que $k\ge\ell\ge 1$ et $k+\ell=d$, on voit que $\T^d_V$ est bilipschitzien à
$$\T^d_{D(k,\ell)}=\left\{(x_1,\dots,x_d)\in \T^d\;\Big|\; \sum_{i=1}^k b(x_i)=\sum_{i=k+1}^db(x_i)\right\};$$
c'est le cône asymptotique de $G^d_{D(k,\ell)}$, qui admet une métrique riemannienne invariante à courbure négative ou nulle \cite{AW}. En particulier, ce cône asymptotique est bilipschitzien à un espace métrique CAT(0) et en particulier est contractile.

\end{itemize}

(De façon générale, notons que par \cite{AW}, l'existence d'une métrique riemannienne invariante sur $G^d_V$ à courbure sectionnelle négative ou nulle équivaut à l'existence dans $V^\bot$ d'un vecteur à coefficients tous strictement positifs.)

On voit que ces cas particuliers recouvrent tous les cas où $v\in\{1,d-1,d\}$ et en particulier où $d\le 3$. En revanche, pour $(d,v)=(4,2)$, le groupe diagonal est de dimension 3 tandis que la 4-grassmanienne est de dimension 4, et les orbites ``génériques" sont de dimension 1. Il serait très intéressant de déterminer si on obtient ainsi une infinité de cônes asymptotiques non (bilipschitz) homéomorphes.
\end{exe}

Mentionnons enfin un problème concernant les cônes de dimension 1: il découle du théorème \ref{codimre} que si $G$ est un groupe de Lie triangulable (définition \ref{glt}), la dimension de son cône asymptotique est 1 (pour un ou tout ultrafiltre non principal) si et seulement s'il est un produit semi-direct $N\rtimes\R$, où $N$ est un groupe de Lie nilpotent, et l'action de $\R$ sur l'abélianisé de $N$ est sans point fixe non nul. 

Pour de tels groupes, il serait intéressant de classifier les cône asymptotiques. Si l'action de $\R$ sur $N$ est contractante, alors il est, comme déjà mentionné plus haut, isométrique à l'arbre réel $\mathbf{T}$, et ce pour tout choix de métrique géodésique et tout ultrafiltre non principal. Excluons ce cas maintenant. On peut vérifier qu'alors, le cône asymptotique possède un groupe fondamental non dénombrable et n'est pas semi-localement simplement connexe. On peut se demander si alors, en toute généralité, ce cône est homéomorphe (ou bilipschitzien) au cône de SOL (qui est lui unique à homéomorphisme bilipschizien près, par ce qui précède). L'argument qui précède montre que c'est le cas si $N$ a une décomposition en produit direct $\R$-invariante avec un facteur contracté et l'autre dilaté. Un exemple où ça ne s'applique pas est le produit semi-direct $H_3\rtimes\R$, où l'action de $\R$ sur $H_3$ est déterminée par la condition que le quotient par le centre de $H_3$ est isomorphe à SOL: dans ce cas-là déjà, on ne sait pas si le cône asymptotique pour un ultrafiltre donné est homéomorphe à $\Cone(\mathrm{SOL})$.

%
%%
%%% NE PAS EFFACER!
% comme on a déja une décompo propre par la base canonique, la décompo en SEP est donnée par des sommes des axes de base. si on a un iso, apres permutation on peut supposer qu'il stabilise les SEP propres. Par composition avec un automorphisme du truc de droite qui est identité sur {V'}^\bot et stabilise les SEP, on peut alors supposer que l'iso est l'identité sur R^d. comme l'action est fidèle, ça implique que c'st l'identité
%%% NE PAS EFFACER!
%%
%

\chapter{Fonction de Dehn}\label{chafon}

Ce chapitre concerne les articles suivants (tous en collaboration avec R.~Tessera):
\begin{itemize}
\item \cite{CTcm} {\em Metabelian groups with quadratic Dehn function and Baumslag-Solitar groups}. Confluentes Math. 2010, 13 pages.
\item \cite{CTAB1} {\em Dehn function and asymptotic cones of Abels' group}. J. of Topology 2013, 27 pages.
\item \cite{CTge} {\em Geometric presentations of Lie groups and their Dehn functions}. Prépub\-lication, 115 pages, 2013.
\end{itemize}

L'article \cite{CTge} est l'aboutissement d'un projet établi en 2006: caractériser les groupes de Lie connexes ayant une fonction de Dehn exponentielle. Les articles \cite{CTcm,CTAB1} peuvent être vus comme des articles satellites de \cite{CTge}, en étudiant des cas particuliers; toutefois ils ne sont pas supplantés par \cite{CTge}, notamment parce que le fait d'étudier des cas particuliers permet de travailler sur des corps locaux de caractéristique non nulle, et beaucoup des corolaires qui sont obtenus dans \cite{CTge} sont spécifiques à la caractéristique non nulle.

Un point nouveau de tous ces articles est la manière de définir la fonction de Dehn, uniforme à tous les groupes localement compacts compactement présentés: un groupe localement compact est compactement présenté s'il possède une présentation de la forme $\langle S\mid\mathcal{R}\rangle$, où pour un certain $d$, l'ensemble $\mathcal{R}\subset F_S$ est l'ensemble des éléments de longueur $\le d$ relativement à la partie génératrice compacte $S$, dont l'image dans $G$ est triviale. Les éléments de $\mathcal{R}$ sont appelés {\em relateurs}, et les éléments du noyau $N$ du morphisme $F_S\to G$ sont appelés {\em relations}; par définition $N$ est engendré comme sous-groupe par l'ensemble $\mathcal{R}^{\textnormal{conj}}$ des éléments conjugués à un élément de $\mathcal{R}$. L'{\em aire}, notée $\textnormal{aire}(r)$ d'une relation $r\in N$ est sa longueur relativement à $\mathcal{R}^{\textnormal{conj}}$. La fonction de Dehn de $G$ (relative à $S$ et $d$) est définie par 
\[\delta(n)=\sup\{\mathrm{aire}(r)\mid r\in N,\;|r|_S\le n\}.\]
Un argument de compacité montre que $\delta(n)<\infty$ pour tout $n$. La croissance asymptotique de $\delta$ ne dépend pas du choix de $(S,d)$, et est en fait un invariant de quasi-isométrie de $G$.

Cette fonction a été largement étudiée pour $G$ discret. Pour $G$ groupe de Lie simplement connexe, une fonction dite de remplissage riemannien a été introduit par ailleurs, et il est connu que pour tout réseau cocompact $\Gamma$ de $G$, la fonction de Dehn de $\Gamma$ est asymptotiquement équivalente à la fonction de remplissage riemannienne de $G$. Le langage ci-dessus permet de décomposer ce résultat en deux résultats plus naturels: d'une part le fait que la fonction de remplissage riemannien de $G$ et sa fonction de Dehn soient asymptotiquement équivalentes (sans faire l'hypothèse artificielle et restrictive de l'existence d'un réseau), et d'autre part le fait que $G$ et tout réseau cocompact aient des fonctions de Dehn asymptotiquement équivalentes (car ils sont quasi-isométriques).

Ce langage permet également de prouver des estimations de fonction de Dehn pour des groupes algébriques sur des corps localement compacts (les corps localement compacts seront systématiquement supposés non discrets, par exemple, $\R$, $\Q_p$, ou $\mathbf{F}_p\lp t\rp$) de manière indépendante du corps, ou du moins par des méthodes générales, même si la conclusion dépend du corps.

Étant donné un corps localement compact $\K$ et $V$ un espace vectoriel de dimension finie sur $\K$, on dit qu'une action $\alpha:G\to\GL(V)$ d'un groupe sur $\K$ est {\em monomodulaire} si pour tout $g\in G$, toutes les valeurs propres de $\alpha(g)$ (dans une extension convenable) sont égales en valeur absolue, et que l'action est {\em distale} si toutes ces valeurs propres ont une valeur absolue égale à 1 pour tout $g$. Une action d'un groupe nilpotent sur un espace vectoriel se décompose toujours en une somme d'action monomodulaires \cite[\S 2]{CTge}, si bien que les hypothèses qui suivent ne sont pas spécialement restrictives.

On considère des espaces vectoriels $V_1,\dots,V_k$ de dimension finie non nulle sur des corps localement compacts $\K_i$. On considère un groupe abélien localement compact, compactement engendré $A$ (typiquement, $A=\Z^d$), avec des actions linéaires continues sur chacun des $V_i$. On suppose que pour tout $i$, l'action est monomodulaire et non distale. Soit
\[G=\left(\bigoplus V_i\right)\rtimes A.\]
On définit $A_i^+$ comme l'ensemble des éléments de $A$ qui agissent sur $V_i$ avec des valeurs propres de valeur absolue $>1$. Par hypothèse, $A_i$ est donc non vide pour tout $i$. Soit également $d$ la dimension sur $\R$ de $\Hom(A,\R)$.
\begin{thm}\label{Gij}Étant donné $G$ comme ci-dessus, on a
\begin{enumerate}
\item\label{ijna} S'il existe $i,j$ tel que $A_i^+\cap A_j^+=\emptyset$ et $\K_i$ et $\K_j$ sont non archimédiens, alors $G$ n'est pas compactement présenté;
\item\label{ija} S'il existe $i,j$ tel que $A_i^+\cap A_j^+=\emptyset$ mais (\ref{ijna}) ne s'applique pas, alors $G$ est compactement présenté de fonction de Dehn exponentielle.
\item\label{ijnv} Si pour tous $i,j$ on a $A_i^+\cap A_j^+\neq\emptyset$, alors $G$ a une fonction de Dehn linéaire si $d=1$ et quadratique si $d\ge 2$.
\end{enumerate}
\end{thm}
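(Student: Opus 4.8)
\medskip

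\noindent\textbf{Proof proposal.} Monomodularity provides, for each $i$, a continuous homomorphism $\alpha_i:A\to\R$ recording the common value of $\log|\cdot|$ over the eigenvalues on $V_i$; non-distality means $\alpha_i\neq 0$, and $A_i^+=\alpha_i^{-1}(\R_{>0})$ is the preimage of an open half-space of $\Hom(A,\R)\cong\R^d$. Thus $A_i^+\cap A_j^+=\emptyset$ forces $\alpha_j=-\lambda\alpha_i$ with $\lambda>0$ (antiparallel weights), whereas the hypothesis of (\ref{ijnv}) says exactly that no two weights are antiparallel, i.e.\ the open cones $\{\alpha_i>0\}$ pairwise meet. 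The plan is to read the coarse geometry of $G$ off this arrangement of half-spaces, using the dichotomy of Chapter~1 between asymptotic cones that are $\R$-trees, that are merely simply connected, or that fail to be coarsely simply connected, to govern respectively the linear, quadratic, and non-presented regimes, with the precise exponents pinned down by explicit filling estimates.

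For (\ref{ijnv}), suppose first $d=1$: then $\Hom(A,\R)=\R$ and all $\alpha_i$ share one sign, so a single direction of $A$ contracts every $V_i$ simultaneously and $A$ maps cocompactly to $\R$. Consequently $G$ is, up to a compact kernel, cocompact in the Heintze-type group $(\bigoplus V_i)\rtimes\R$ with $\R$ acting by a contraction, which acts geometrically on a Gromov-hyperbolic space (a negatively curved homogeneous space when the $\K_i$ are archimedean, a horospherical structure over Bruhat--Tits trees otherwise); hence $G$ is hyperbolic and its Dehn function is linear. When $d\ge 2$ the group $A$ contains a copy of $\Z^2$, producing a flat, so $G$ is not hyperbolic and, by Gromov's linear-or-quadratic gap, its Dehn function is at least quadratic. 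For the matching upper bound I would comb a relation first into the abelian quotient $A$ (quadratic, as $A$ is virtually $\Z^d$) and then fill the lift by moving each elementary commutation — which involves only two factors $V_i,V_j$ at a time — into a direction of $A$ simultaneously contracting both, available precisely because $A_i^+\cap A_j^+\neq\emptyset$, so that no exponential distortion is incurred; this is the locally compact analogue of the quadratic filling of Abels' group established in \cite{CTAB1}.

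For (\ref{ijna}) and (\ref{ija}) we are handed an antiparallel pair $(i,j)$. Projecting $A$ along $\alpha_i$ to a rank-one group and killing the remaining $V_\ell$ produces a SOL-type quotient $H=(V_i\oplus V_j)\rtimes\Z$ whose generator expands $V_i$ and contracts $V_j$, with coarse geometry that of a horocyclic product of the two rank-one spaces attached to $V_i$ and $V_j$. In case (\ref{ija}) at least one of $\K_i,\K_j$ equals $\R$, contributing one-dimensional connectivity to the fibres of this product; $H$, and with it $G$, then acts geometrically on a coarsely simply connected space and is compactly presented, while the SOL-retract forces the Dehn function to be exponential — the lower bound because elements of $V_i$ are logarithmically distorted, yielding loops of exponential area, and the upper bound by filling along the contracting direction. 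In case (\ref{ijna}) both fibres are totally disconnected trees, the connectivity used above is unavailable, and the coarse $\pi_1$ of the horocyclic product is nontrivial at every scale: concretely, I would exhibit for each candidate relator length $d$ a loop assembled from commutators $[v,\,{}^{g}w]$ with $v\in V_i$, $w\in V_j$, $g\in A$, that cannot be filled by relators of length $\le d$. This is the failure, transported to the locally compact setting, of the Bieri--Strebel $\mathrm{FP}_2$ tameness that detects compact presentation of such metabelian groups, and it is why the associated discrete groups (lamplighters, Diestel--Leader graphs) are not finitely presented.

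The principal obstacle is the quadratic upper bound in (\ref{ijnv}) for $d\ge 2$: a naive filling accrues exponential area from the distortion of the $V_i$, and suppressing it requires the combinatorial lemma that any relation can be filled by routing each two-factor commutation into a direction contracting the relevant pair, at total quadratic cost, uniformly across the several fields $\K_i$ — in particular in positive characteristic, where the ad hoc estimates of \cite{CTcm,CTAB1} are needed and no reduction to a Lie group or a cocompact lattice is available. The non-presentation in (\ref{ijna}) in positive characteristic is the second delicate point, for the same reason: there the unfillable loops must be produced by hand rather than inherited from an archimedean model.
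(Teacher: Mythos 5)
Your overall architecture is the right one and matches what the memoir actually does: the weights $\alpha_i$, the reformulation of $A_i^+\cap A_j^+=\emptyset$ as an antiparallel pair of characters, the reduction of (\ref{ijnv}) with $d=1$ to a focal (Heintze-type) group, the use of a common contracting direction for each pair $(i,j)$ to obtain the quadratic upper bound (this is exactly the mechanism of \cite{CTcm}), and the Bieri--Strebel splitting theorem behind (\ref{ijna}). Two steps, however, would fail as written.

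First, the ``SOL-type quotient $H=(V_i\oplus V_j)\rtimes\Z$'' obtained by ``projecting $A$ along $\alpha_i$'' does not exist in general: a subgroup $B\le A$ is normal in $(V_i\oplus V_j)\rtimes A$ only if it acts trivially on $V_i\oplus V_j$, and $\ker\alpha_i$ has no reason to do so (it acts with eigenvalues of modulus $1$, not trivially). The only retract available is $(V_i\oplus V_j)\rtimes A$, which retains the full rank $d$ of $A$; the exponential lower bound for such a group is genuinely harder than for SOL and occupies all of \cite[Chap.~8]{CTge}. Second, and more seriously, your justification of that lower bound --- elements of $V_i$ are exponentially distorted, hence loops of exponential area --- proves too much: the $V_i$ are exactly as distorted under the hypothesis of (\ref{ijnv}), where the theorem asserts a \emph{quadratic} Dehn function (this is the whole point of the corollary realizing $\mathrm{BS}(1,n)$ inside $\Z[1/n]^3\rtimes\Z^2$, which has quadratic Dehn function despite exponentially distorted normal subgroups). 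Distortion alone never yields a Dehn-function lower bound; the obstruction must be attached specifically to the antiparallel pair, and producing it rigorously is where the real work lies. The remainder of your sketch --- the $d=1$ hyperbolicity, the flat forcing at least quadratic for $d\ge 2$, the quadratic filling via common contractions, and non-presentability in the totally disconnected case via Bieri--Strebel or unfillable commutator loops --- correctly identifies the mechanisms, in agreement with the sources \cite{CTcm,CTge,BiS} to which the memoir delegates each item.
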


Le résultat (\ref{ijnv}) est extrait de \cite{CTcm}. Remarquons que pour tout $i$ il existe un morphisme continu non nul $\phi_i:A\to\R$ tel que $A_i^+=\phi_i^{-1}(\R_+^*)$; ainsi la condition $A_i^+\cap A_j^+=\emptyset$ revient à dire que dans l'espace vectoriel $\Hom(A,\R)$, le point 0 est dans le segment reliant $\phi_i$ à $\phi_j$. Ainsi la condition $A_i^+\cap A_j^+\neq\emptyset$ est générique si $d\ge 2$ mais pas si $d=1$.

Le résultat (\ref{ijna}) est conséquence de la version localement compacte du théorème de scindage de Bieri-Strebel \cite{BiS}; enfin (\ref{ija}) est prouvé dans \cite[\S 3.B]{CTge} pour la majoration et \cite[Chap.\ 8]{CTge} pour la minoration.

En appliquant ce résultat dans un cas particulier (un produit semi-direct convenable $\F_p\lp t\rp^3\rtimes\Z^2$) et en passant à un réseau cocompact, on en déduit

\begin{cor}[\cite{CTcm}]
Le groupe de Baumslag $\F_p[t,t^{-1},(t-1)^{-1}]\rtimes_{(t,t-1)}\Z^2$ a une fonction de Dehn quadratique.
\end{cor}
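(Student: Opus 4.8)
The plan is to realize the Baumslag group $\Gamma=R\rtimes\Z^2$, with $R=\F_p[t,t^{-1},(t-1)^{-1}]$ and $\Z^2=\langle a,b\rangle$ acting by multiplication by $t$ and by $t-1$ respectively, as a cocompact lattice in a group $\F_p\lp t\rp^3\rtimes\Z^2$ to which Theorem~\ref{Gij}(\ref{ijnv}) applies. Regarding $R$ as the ring of rational functions on the affine line over $\F_p$ with poles confined to $S=\{0,1,\infty\}$, I consider the three completions of $\F_p(t)$ at these places: $V_1=\F_p\lp t\rp$ at $t=0$, $V_2=\F_p\lp t-1\rp$ at $t=1$, and $V_3=\F_p\lp t^{-1}\rp$ at $t=\infty$. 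Each is a non-archimedean local field of characteristic $p$ isomorphic to $\F_p\lp t\rp$, which is why the ambient group reads $\F_p\lp t\rp^3\rtimes\Z^2$. The diagonal map $R\to V_1\oplus V_2\oplus V_3$ intertwines the multiplication actions of $\Z^2$, so it extends to an embedding of $\Gamma$ into $G=(V_1\oplus V_2\oplus V_3)\rtimes\Z^2$.

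First I would check that this diagonal embedding makes $\Gamma$ a discrete cocompact subgroup of $G$. This is the positive-characteristic, function-field version of the cocompactness of $S$-integers in the product of their completions: discreteness follows from the product formula, since a nonzero element of $R$ (which is integral outside $S$) cannot be simultaneously small at every place of $S$, and cocompactness of $R$ in $\bigoplus V_i$ is the additive statement for the ring of $S$-integers. The semidirect structure is preserved because $\Z^2$ acts by the same ring multiplication on $R$ and on each $V_i$. Since a cocompact lattice is quasi-isometric to its ambient group and the asymptotic Dehn function is a quasi-isometry invariant, it then suffices to compute the Dehn function of $G$.

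Next I would verify the hypotheses of Theorem~\ref{Gij}. Each $V_i$ is one-dimensional over $\K_i=V_i$, so every $\Z^2$-action is through a single scalar, hence automatically monomodular; it is non-distal because $t$ and $t-1$ have nonzero valuation at the relevant places. To identify $A_i^+$, I compute the absolute value of the eigenvalue of $a^mb^n$ on $V_i$, namely $|t|_i^m\,|t-1|_i^n$: at $t=0$ this equals $p^{-m}$, at $t=1$ it equals $p^{-n}$, and at $t=\infty$ it equals $p^{m+n}$. Therefore $A_1^+=\{m<0\}$, $A_2^+=\{n<0\}$ and $A_3^+=\{m+n>0\}$, which correspond to the linear forms $-m$, $-n$ and $m+n$ on $\Z^2$.

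The crux is then to confirm that $A_i^+\cap A_j^+\neq\emptyset$ for every pair, equivalently that $0$ lies on none of the three segments joining these forms in $\Hom(\Z^2,\R)\cong\R^2$: indeed $(m,n)=(-1,-1)$ lies in $A_1^+\cap A_2^+$, $(-1,2)$ in $A_1^+\cap A_3^+$, and $(2,-1)$ in $A_2^+\cap A_3^+$. Since $d=\dim_\R\Hom(\Z^2,\R)=2\geq2$, Theorem~\ref{Gij}(\ref{ijnv}) gives that $G$ has quadratic Dehn function, and by the quasi-isometry invariance above $\Gamma$ does as well. I expect the only genuinely non-routine step to be the cocompactness of the diagonal lattice in positive characteristic; everything else is bookkeeping of valuations fed into the already-established Theorem~\ref{Gij}.
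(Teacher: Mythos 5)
Your proposal is correct and follows exactly the route the paper indicates: realizing the Baumslag group as a cocompact lattice in a semidirect product $\F_p\lp t\rp^3\rtimes\Z^2$ (the three completions at $0$, $1$, $\infty$) and applying Theorem~\ref{Gij}(\ref{ijnv}), whose hypotheses you verify correctly via the valuations of $t$ and $t-1$ at each place. Nothing to add.
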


Également à l'aide des nombres $p$-adiques, on obtient le corollaire suivant:

\begin{cor}[\cite{CTcm}]
Pour tout $n\ge 2$, le groupe de Baumslag-Solitar $\mathrm{BS}(1,n)$ est isomorphe à un sous-groupe d'un groupe de présentation finie à fonction de Dehn quadratique.
\end{cor}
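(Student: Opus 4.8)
The plan is to exhibit a finitely presented metabelian group $\Gamma$ with quadratic Dehn function that contains $\mathrm{BS}(1,n)$ as a (highly distorted, non-cocompact) subgroup, and to obtain the quadratic bound by realizing $\Gamma$ as a cocompact lattice in a locally compact group $G$ to which Theorem~\ref{Gij}(\ref{ijnv}) applies with $d=2$. First I would recall that $\mathrm{BS}(1,n)\cong\Z[1/n]\rtimes_{\times n}\Z$. Applying the theorem to this presentation directly fails for two linked reasons: the acting group is only $\Z$, so $d=1$; and, worse, at the real place $\times n$ dilates while at each $\Q_p$ with $p\mid n$ it contracts, so the half-lines $A_i^+$ are antipodal. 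This is exactly case~(\ref{ija}), which (correctly) yields the exponential Dehn function of $\mathrm{BS}(1,n)$ itself. The idea is therefore to enlarge the acting abelian group to $\Z^2$ by adjoining a second multiplication, tuned so that all the $A_i^+$ pairwise meet.

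Concretely, fix a prime $p_0\nmid n$ and set $R=\Z[1/(np_0)]$, the ring of $S$-integers for $S=\{\infty\}\cup\{p:p\mid np_0\}$. Let $A=\Z^2=\langle t,s\rangle$ act on $R$ by $t=(\times n)$ and $s=(\times p_0)$; both are units of $R$, so this is an action by automorphisms, and $n,p_0$ are multiplicatively independent so $A$ really is $\Z^2$. Diagonally, $R$ is a discrete cocompact subgroup of $V=\R\times\prod_{p\mid np_0}\Q_p$ (this is the standard arithmetic fact, and it is why we must keep the archimedean factor), and the $A$-action extends to diagonal multiplication on $V$. Hence $\Gamma=R\rtimes A$ is a cocompact lattice in $G=V\rtimes A$, so it is finitely presented and quasi-isometric to $G$.

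Next I would check the hypotheses of Theorem~\ref{Gij}. Each factor of $V$ is one-dimensional over its local field, so every element of $A$ acts by a scalar and the action is monomodular; since $t$ acts non-distally at $\infty$ and at the primes dividing $n$, and $s$ acts non-distally at $\infty$ and at $p_0$, each factor carries a non-distal $A$-action. Writing $\Hom(A,\R)\cong\R^2$ in coordinates $(\phi(t),\phi(s))$, the dilation functionals are $\phi_\infty=(\log n,\log p_0)$, $\phi_p=(-v_p(n)\log p,\,0)$ for $p\mid n$, and $\phi_{p_0}=(0,\,-\log p_0)$, so the open half-planes are $A_\infty^+=\{a\log n+b\log p_0>0\}$, $A_p^+=\{a<0\}$ (for $p\mid n$), and $A_{p_0}^+=\{b<0\}$. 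A short computation shows that every pair among these meets (e.g.\ $A_\infty^+\cap A_p^+\neq\emptyset$ by taking $a<0$ and $b\gg 0$), so we land in case~(\ref{ijnv}); as $d=\dim_\R\Hom(A,\R)=2$, the Dehn function of $G$, and hence of $\Gamma$, is quadratic.

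Finally, the embedding is immediate: $\Z[1/n]\subset R$ is a subring and $\langle t\rangle\subset A$ is a direct factor acting on $\Z[1/n]$ by $\times n$, so $\mathrm{BS}(1,n)=\Z[1/n]\rtimes_{\times n}\langle t\rangle$ is literally a subgroup of $\Gamma$. The step requiring genuine care — the main obstacle — is the verification that all the $A_i^+$ pairwise intersect: this is precisely what the auxiliary multiplication $\times p_0$ is engineered to achieve and is the whole reason passing to $d=2$ helps, so the choice of $p_0$ (coprime to $n$) and the sign bookkeeping of the valuations must be done carefully. I would close by noting there is no tension with $\mathrm{BS}(1,n)$ having exponential Dehn function: the inclusion $\mathrm{BS}(1,n)\hookrightarrow\Gamma$ is exponentially distorted, in particular not a quasi-isometric embedding, so Dehn functions are under no obligation to match.
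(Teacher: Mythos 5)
Your argument is correct and rests on the same two pillars as \cite{CTcm}: Theorem \ref{Gij}(\ref{ijnv}) applied to a locally compact group of the form $(\bigoplus V_i)\rtimes\Z^2$, followed by passage to a cocompact lattice. The only real difference is the witness group: the paper uses a suitable semidirect product $\Z[1/n]^3\rtimes\Z^2$, whereas you take the $S$-arithmetic group $\Z[1/(np_0)]\rtimes\Z^2$ with $\Z^2$ acting through the two multiplicatively independent units $n$ and $p_0$. Your weight computation is right: the archimedean functional $(\log n,\log p_0)$ and the non-archimedean ones, proportional to $(-1,0)$ for $p\mid n$ and to $(0,-1)$ for $p_0$, are pairwise non-antipodal, so all the $A_i^+$ pairwise meet, $d=2$, and case (\ref{ijnv}) gives the quadratic bound for $\bigl(\R\times\prod_{p\mid np_0}\Q_p\bigr)\rtimes\Z^2$, hence for its cocompact lattice $\Gamma$, which visibly contains $\Z[1/n]\rtimes_n\Z$. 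This is a clean alternative --- a single ring of $S$-integers rather than three copies of $\Z[1/n]$ --- and you correctly identify why one cannot stay inside $\Z[1/n]$ itself when $n$ is a prime power: the unit group then has rank one, so every archimedean weight is forced to be antipodal to a non-archimedean one. One small correction: your closing claim that $\mathrm{BS}(1,n)$ is exponentially distorted in $\Gamma$ is neither needed nor clearly true (the word length of an element of $\Z[1/n]\rtimes_n\Z$ is governed by the same local heights in both groups, so the inclusion is plausibly undistorted); the right remark is simply that the Dehn function is a quasi-isometry invariant but is not inherited by subgroups, distorted or not.
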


Cette question avait été posée dans \cite{BORS} et des résultats partiels (avec fonction de Dehn polynomiale, puis cubique, au lieu de quadratique) avaient été obtenus dans \cite{BORS} et \cite{AO}. Ici, le groupe de présentation finie obtenu est un produit semi-direct convenable $\Z[1/n]^3\rtimes\Z^2$.

%; une preuve de sa présentation quadratique sans le plonger comme réseau dans $\Q_n^3\rtimes\Z^2$ ne semble pas évidente à obtenir. 

Des résultats dans l'esprit du \ref{Gij} s'obtiennent pour des groupes non métabéliens. Plus précisément, considérons un produit semi-direct $U\rtimes A$, où $A$ est toujours un groupe localement compact abélien, et $U$ possède une filtration $\{1\}=U_0\subset U_{[1]}\subset\dots \subset U_{[k]}=U$, où chaque $U_i$ est un sous-groupe fermé $A$-invariant, et chaque $U_i=U_{[i]}/U_{[i-1]}$ est un $\K_i$-espace vectoriel sur $\K_i$ muni d'une action continue monomodulaire non distale de $A$. 

%%%%%%%%%%   NE PAS EFFACER
%
%%%%%%%%%%---> verif section ci-dessous

\begin{thm}[{\cite[\S 4.D]{CTge}}]\label{stam}
Supposons de plus que tous les $\K_i$ sont de caractéristique 0. Supposons que pour tous $i,j$ on a $A_i^+\cap A_j^+\neq\emptyset$. Alors $G$ a une fonction de Dehn linéaire si $d=1$ et quadratique si $d=2$.
\end{thm}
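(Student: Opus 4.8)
I first settle the case $d=1$ together with both lower bounds. When $\dim_\R\Hom(A,\R)=1$, every weight $\phi_i$ is a nonzero multiple of a single morphism $\phi\colon A\to\R$, and the hypothesis $A_i^+\cap A_j^+\neq\emptyset$ for all $i,j$ forces these multiples to share one sign; hence a single direction in $A$ contracts every layer $U_i$ at once, its action on the nilpotent group $U$ is a compaction, and $G$ is a Gromov-hyperbolic locally compact group (a Heintze group, with a negatively curved left-invariant metric, when the $\K_i$ are all $\R$). Thus $G$ has genuinely linear Dehn function. For $d=2$ the quadratic lower bound is immediate: the projection $G\to A$ splits the inclusion $A\hookrightarrow G$, so $A$ is a retract and $\delta_G\succeq\delta_A$, while $A$ retracts in turn onto $\Z^2$ (or $\R^2$), of quadratic Dehn function. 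The whole remaining task is therefore the quadratic \emph{upper} bound when $d=2$.

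For this I would fix a compact presentation adapted to the filtration: generators a compact generating set of $A$ together with compact symmetric identity neighbourhoods in each layer $U_i$, and relators encoding the abelian law of $A$, the weight relations for the $A$-action on each $U_i$, and the commutator relations $[U_{[i]},U_{[j]}]\subset U_{[i+j-1]}$ coming from the nilpotent structure. Given a null-homotopic word $w$ of length $n$, the filling is a combing built from \emph{pairwise} elementary moves. Each move either shrinks a single layer-$i$ syllable by conjugating it with a power of a contracting direction for that layer --- exponential contraction lets a syllable of combinatorial size up to $e^{O(n)}$ be reduced in $O(n)$ steps, so shrinking all $O(n)$ syllables costs $O(n^2)$ --- or transposes a layer-$i$ syllable past a layer-$j$ one and fills the resulting commutator square. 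The decisive point is that every transposition involves exactly two layers, and $A_i^+\cap A_j^+\neq\emptyset$ says precisely that this pair admits a common contracting direction (equivalently, that the rays $\phi_i,\phi_j$ are not antipodal), so the two syllables can be shrunk simultaneously and the square filled at bounded cost. This pairwise condition is genuinely weaker than requiring all $\phi_i$ to lie in one open half-plane --- three weights at mutual angle $2\pi/3$ are admissible --- which is exactly why it is imposed pair by pair. Once all syllables are bounded, $w$ reduces to a loop supported, up to bounded error, on the abelian group $A$; since $d=2$ this loop bounds there with area $O(n^2)$, and undoing the shrinkings again costs $O(n^2)$, yielding the quadratic bound.

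The main obstacle, and the sole place where characteristic $0$ and the filtration length are used, is the control of the commutator terms created during combing; in the metabelian case of Theorem~\ref{Gij}(\ref{ijnv}) these terms are absent, which is why that statement holds over every local field whereas \ref{stam} does not. I would organise the control by induction on the filtration length $k$. The quotient $G/U_{[k-1]}=U_k\rtimes A$ is metabelian and still satisfies $A_i^+\cap A_j^+\neq\emptyset$, hence has quadratic Dehn function by Theorem~\ref{Gij}(\ref{ijnv}), while the closed normal subgroup $U_{[k-1]}\rtimes A$ has a strictly shorter filtration and is handled by the inductive hypothesis. The difficulty is that $G$ is a generally non-split extension of the former by the latter, and Dehn functions do not combine automatically under extensions: one must check that the commutators injected into the deeper layer $U_{[i+j-1]}$ are produced at controlled size and filled at linear cost apiece, so that two quadratic contributions do not compound. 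Characteristic $0$ is what makes this work --- it supplies honest one-parameter contracting flows and the continuous ``division'' by which a deep commutator term can be shrunk by an arbitrary real factor, whereas over $\F_p\lp t\rp$ the layers are $\F_p$-vector spaces carrying no such continuous contraction. Assembling these pairwise, characteristic-$0$ fillings into a single disc of area $O(n^2)$ is the technical heart of the argument.
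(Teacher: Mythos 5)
The easy parts of your plan are sound and consistent with the text: for $d=1$ the pairwise condition does force all weights onto a single ray, so a common element of $\bigcap_i A_i^+$ compacts $U$, $G$ is a compaction mapping torus, hence Gromov-hyperbolic by the results of Chapter 3 and of linear Dehn function; and the quadratic lower bound for $d=2$ via the retraction of $G$ onto $A$ is correct. The first genuine gap is in the quadratic upper bound, where your cost accounting does not close. A null-homotopic word of length $n$ has $O(n)$ syllables; sorting them requires $O(n^2)$ pairwise transpositions; and a transposition of a $U_i$-syllable past a $U_j$-syllable is \emph{not} of bounded cost: to apply the commutation relation you must first conjugate the pair into the region where both are small, i.e.\ by a word of length $O(n)$ in the common contracting direction of $A_i^+\cap A_j^+$, so each transposition costs $O(n)$ and your scheme yields $O(n^3)$, not $O(n^2)$. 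The argument of \cite{CTcm,CTge} avoids this by first invoking Gromov's trick to reduce to null-homotopic words of the special form $\prod_{j=1}^M\gamma_j s_j\gamma_j^{-1}$ with $M$ \emph{bounded} independently of $n$ (a reduction which itself costs only a quadratic area), and only then using pairwise tameness to fill these boundedly many factors; some such reduction to boundedly many syllables is indispensable and is entirely absent from your plan. The same issue undermines your induction on the filtration length, since the commutator terms pushed into deeper layers are created $O(n^2)$ times, not $O(1)$ times.

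The second problem is that your explanation of the characteristic-$0$ hypothesis is backwards. Contraction is in no way special to characteristic $0$: multiplication by $t$ contracts $\F_p\lp t\rp$ perfectly well, which is precisely why Theorem~\ref{Gij}(\ref{ijnv}) holds over every nondiscrete locally compact field, including $\F_p\lp t\rp$. The text states explicitly that the hypothesis is \og certainement superflue\fg\ and that only \og quelques pr\'eliminaires alg\'ebriques\fg\ are missing in positive characteristic, \og l'argument g\'eom\'etrique restant le m\^eme\fg. What characteristic $0$ actually buys is the Malcev/Baker--Campbell--Hausdorff dictionary between the non-abelian unipotent group $U$ (and the amalgam of its tame subgroups) and its Lie algebra $\mathfrak{u}$, which is how one controls the kernel of the amalgam and writes the relevant relations with bounded length; it has nothing to do with the availability of ``continuous division'' in the deep layers. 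So the one place where you locate the role of the hypothesis is not where the proof spends it, and the heuristic you give would wrongly predict that even the metabelian case fails over $\F_p\lp t\rp$.
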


L'hypothèse de caractéristique 0 est certainement superflue mais quelques préli\-mi\-naires algébriques seraient nécessaires pour englober la caractéristique non nulle, l'argument géométrique restant le même que dans \cite[\S 4.D]{CTge} (mais pas que dans \cite{CTcm}, la commutativité du $U$ dans le théorème \ref{Gij} escamotant une partie de l'argument).

Commentons l'hypothèse ``non distale": dans le théorème \ref{Gij} cette hypothèse est naturelle: en effet si elle est omise et que l'action de $A$ sur $V_i$ est distale, alors si $\K_i$ est non archimédien, alors $G$ n'est pas compactement engendré; si $\K_i$ est archimédien, il y a une discussion un peu plus compliquée mais ce qui se passe est en tout cas bien compris. En revanche, dans le cadre du théorème \ref{stam}, l'hypothèse de non distalité est restrictive, et de même, l'hypothèse $A_i^+\cap A_j^+\neq\emptyset$ pour tous $i,j$ l'est également, car contrairement au théorème \ref{Gij}, la réciproque est fausse. Le premier exemple étudié en ce sens est le groupe d'Abels, considéré dans \cite{CTAB1}, dont nous présentons quelques résultats ci-dessous.

Si $R$ est un anneau commutatif et $d\ge 2$, on définit le groupe $A_d(R)$ comme l'ensemble des matrices inversibles triangulaires supérieures à coefficients dans $R$, dont les coefficients $(1,1)$ et $(d,d)$ sont égaux à 1. Par exemple,
\[A_4(R)=\left\{\begin{pmatrix}1 & u_{12} & u_{13} & u_{14} \\ 0 & s_{22} &  u_{23 }& u_{24}\\ 0 & 0 & s_{33} & u_{34} \\ 0 & 0 & 0 & 1\end{pmatrix}: u_{ij}\in R,\;s_{ii}\in R^\times\right\}\]

Le groupe $A_3$ a en fait été introduit par P.~Hall, et Abels a plus tard considéré $A_d$ pour $d\ge 4$ \cite{Ab1}, établissant le fait remarquable et alors inattendu est que $A_d(\Z[1/p])$ est de présentation finie si (et seulement si) $d\ge 4$ (il est de type fini si et seulement si $d\ge 3$). 

\begin{thm}\cite{CTAB1}
Pour tout corps localement compact non discret $\K$ et $d\ge 4$, le groupe $A_d(\K)$ a une fonction de Dehn quadratique.
\end{thm}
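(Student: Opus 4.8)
The plan is to analyse $G=A_d(\K)$ through its semidirect decomposition and to prove the matching bounds $\delta_G(n)\succeq n^2$ and $\delta_G(n)\preceq n^2$ separately, the upper bound being the real content.

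\emph{Setup and conceptual obstacle.} Write $G=U\rtimes A$, where $U$ is the group of strictly upper triangular unipotent matrices and $A=(\K^\times)^{d-2}$ is the diagonal torus carried by the entries $(2,2),\dots,(d-1,d-1)$ (the corners being fixed to $1$). Then $A$ acts on $U$ with root subgroups $U_{ij}\cong(\K,+)$ for $1\le i<j\le d$, the weight of $U_{ij}$ being the character $t_i-t_j$ with the convention $t_1=t_d=0$; in particular $\dim_\R\Hom(A,\R)=d-2\ge 2$, which is the source of the quadratic (rather than linear) answer. The unique zero-weight, hence distal, piece is the center $Z=U_{1d}$. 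First I would record the obstruction this distal center creates: for any intermediate $k$ with $1<k<d$, the root groups $U_{1k}$ and $U_{kd}$ commute modulo $Z$ and have opposite weights $-t_k,\,t_k$, so their positive cones $A_{1k}^+=\{t_k<0\}$ and $A_{kd}^+=\{t_k>0\}$ are disjoint; thus the metabelian subgroup $(U_{1k}\oplus U_{kd})\rtimes A$ of $G/Z$ displays exactly the obstruction of Theorem~\ref{Gij}(\ref{ijna}) whenever $\K$ is non-archimedean. Consequently the central extension $1\to Z\to G\to G/Z\to1$ is essential and cannot be split off, Theorem~\ref{stam} does not apply (and $\K$ may have positive characteristic), so finite presentability must come from Abels' commutator relations $[u_{1k}(x),u_{kd}(y)]=u_{1d}(\pm xy)$ which reconstitute the central direction. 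This is precisely what forces $d\ge4$: one needs an intermediate index $k$ and, in addition, torus rank $d-2\ge2$ in order to express these central relators by \emph{bounded}-length relations.

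\emph{Lower bound.} Since the undistorted torus $A$ has rank $d-2\ge2$, the group $G$ contains a quasi-isometrically embedded $2$-flat and is solvable but not virtually nilpotent (it carries the exponentially distorted non-distal directions $U_{12},U_{2d},\dots$); in particular $G$ is not Gromov-hyperbolic. By the theorem that a compactly presented group with subquadratic Dehn function is hyperbolic, this gives $\delta_G(n)\succeq n^2$, so it suffices to prove the upper bound.

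\emph{Upper bound.} I would fix a compact presentation whose generators are a compact neighbourhood of $1$ in each root group $U_{ij}$ together with a compact generating set of $A$, and whose relators have bounded length: additivity inside each $U_{ij}$, the conjugation relations $a\,u_{ij}(x)\,a^{-1}=u_{ij}(\chi_{ij}(a)x)$, and the Steinberg commutator relations (with $[u_{ij}(x),u_{jk}(y)]=u_{ik}(\pm xy)$, trivial commutation otherwise); one must check, following Abels, that for $d\ge4$ these bounded relators suffice, the central relator $u_{1d}(xy)=[u_{1k}(x),u_{kd}(y)]$ being the crucial one. To fill a null-homotopic word $w$ of length $n$ with area $O(n^2)$ I would use a combing-and-corridor argument: put each vertex in a normal form (unipotent coordinates times a torus element), push all the torus letters of $w$ to one side using the conjugation relators at controlled cost, and fill the remaining loop in the nilpotent group $U$. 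The delicate point is transporting \emph{central} content: a central element $u_{1d}(z)$ is assembled as $[u_{1k}(x),u_{kd}(z/x)]$, and as one advances along $w$ the scale $x$ at which the assembly is done must be re-chosen, a naive choice costing exponentially.

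\emph{Main obstacle.} The hard part will be exactly this last step: showing that the central content can be \emph{distributed} along the loop so that its re-assembly cost is $O(1)$ per unit length, hence $O(n)$ per level and $O(n^2)$ in total. This is where torus rank $d-2\ge2$ is indispensable: the distinct threads $1\to k\to d$ provide several factorizations $u_{1d}=[u_{1k},u_{kd}]$, and at every position along $w$ one of them has both factors simultaneously cheap after moving in $A$, keeping the transport estimate linear. The resulting bound is insensitive to the field $\K$ — in particular valid in positive characteristic — which is why the conclusion is uniform and genuinely lies outside the scope of Theorem~\ref{stam}.
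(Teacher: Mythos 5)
Your setup and your lower bound are sound: $A_d(\K)$ retracts onto the rank-$(d-2)$ torus $A$, hence contains an undistorted $2$-flat for $d\ge 4$, is therefore not Gromov-hyperbolic, and the subquadratic-implies-hyperbolic gap theorem (which does extend to compactly presented locally compact groups) yields $\delta(n)\succeq n^2$. You have also correctly located the difficulty: the distal centre $U_{1d}$ sits inside the derived subgroup, which is exactly why Theorem~\ref{stam} is inapplicable here.

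The upper bound, however, has genuine gaps. First, ``fill the remaining loop in the nilpotent group $U$'' cannot be carried out: for $\K$ non-archimedean $U$ is not even compactly generated, and in every case $U$ is exponentially distorted in $G$, so all filling must take place inside subgroups of the form $U_I\rtimes A$ with $U_I$ \emph{tame} (i.e.\ $0$ outside the convex hull of the weights occurring in $U_I$) --- these are precisely the pieces where Theorem~\ref{Gij}(\ref{ijnv}) supplies quadratic fillings. Second, and more seriously, your central-transport mechanism rests on a claim that is false as stated: for \emph{every} intermediate $k$ the weights of $U_{1k}$ and $U_{kd}$ are $-t_k$ and $t_k$, so $0$ lies in the convex hull of each such pair (this is your own observation that $A_{1k}^+\cap A_{kd}^+=\emptyset$); hence \emph{no} factorization $u_{1d}(z)=[u_{1k}(x),u_{kd}(y)]$ ever has ``both factors simultaneously cheap'' inside a common tame subgroup. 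One can only make the two factors short by conjugating by \emph{different} torus elements, and the real crux is then not a pointwise choice of factorization but the \emph{area} of the welding relations comparing the various expressions of the same central element --- the bilinearity relations $[u_{1k}(\lambda x),u_{kd}(y)]=[u_{1k}(x),u_{kd}(\lambda y)]$ and the comparisons $[u_{12}(x),u_{24}(y)]=[u_{13}(x'),u_{34}(y')]$ when $xy=x'y'$. The algebraic input that makes these fillable, and that accounts for the hypothesis $d\ge 4$, is $H_2(\mathfrak{u})_0=0$; your proposal never isolates it. The proof in \cite{CTAB1}, systematized in \cite{CTge}, goes through the amalgam of the maximal tame subgroups $U_I\rtimes A$ (none of which contains $U_{1d}$), the centrality of the kernel of this amalgam over $U$, explicit quadratic area bounds for the welding relations, and Gromov's trick to reduce an arbitrary loop of length $n$ to boundedly many conjugates of such special relations. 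Finally, you defer to ``following Abels'' the fact that your bounded Steinberg-type relators give a presentation; for a general non-discrete locally compact $\K$, including positive characteristic, this is itself a substantial portion of the work rather than a citation.
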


Le théorème \ref{stam} ne s'applique pas, car le centre de $A_d(\K)$, constitué des matrices de la forme $e_{1d}(\lambda)$ pour $\lambda\in\K$, est inclus dans le sous-groupe dérivé.  

Dans \cite{CTAB1} on en tire diverses applications parfois surprenantes, notamment

\begin{cor}
Il existe un groupe polycyclique dont tout cône asymptotique a un groupe fondamental abélien non trivial.
\end{cor}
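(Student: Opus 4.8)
The plan is to realize the required group as a central quotient of a polycyclic analogue of Abels' group, and to compute the fundamental group of its asymptotic cone from the resulting central extension, the point being that it is governed by a central, hence abelian, holonomy. First I would fix a polycyclic group $\tilde\Gamma$ modelled on $A_4$ but living over the reals, so as to stay polycyclic: take $\tilde\Gamma$ to be a cocompact lattice in the real Lie group $A_4(\R)$. Such a lattice exists and is polycyclic (the two-dimensional diagonal torus is filled by a rank-two group of multiplicatively independent totally positive units, which simultaneously supplies the non-distal, i.e.\ exponentially expanded/contracted, directions that $p$ plays in $A_4(\Z[1/p])$). By the theorem quoted above $A_4(\R)$ has quadratic Dehn function; being quasi-isometric to it, $\tilde\Gamma$ does too, so every cone $\Cone_\omega(\tilde\Gamma)$ is simply connected (quadratic Dehn function implies simply connected cones, after Papasoglu). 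The center $Z=\tilde\Gamma\cap\{e_{14}(t):t\in\R\}\cong\Z$ is infinite and, crucially, exponentially distorted: writing $e_{14}=[e_{12},e_{24}]$ and inflating the unipotent entries cheaply by conjugating with the torus gives $d_{\tilde\Gamma}(1,e_{14}(n))\asymp\log|n|$. I then set $\Gamma=\tilde\Gamma/Z$, which is again polycyclic, hence finitely presented.

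The heart of the matter is that passing to cones turns the central extension $1\to Z\to\tilde\Gamma\to\Gamma\to1$ into a central extension of metric spaces. For every non-principal $\omega$ the quotient induces a surjection $\bar q\colon\Cone_\omega(\tilde\Gamma)\to\Cone_\omega(\Gamma)$ whose fibers are the orbits of the abelian metric group $C:=\Cone_\omega(Z,d_{\tilde\Gamma})$. Because $Z$ is exponentially distorted, $C$ is not a line but precisely the additive group of the Robinson field $\R_\omega$ of Chapter 1, valued by the exponential growth rate; in particular $C$ is totally disconnected, so $\pi_0(C)$ is a large abelian group. Were $\bar q$ a fibration, the homotopy exact sequence together with $\pi_1(\Cone_\omega(\tilde\Gamma))=0$ would give an injection $\pi_1(\Cone_\omega(\Gamma))\hookrightarrow\pi_0(C)$, displaying $\pi_1$ as a subgroup of an abelian group.

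I would therefore construct this injection directly, as a central holonomy. To a loop $\gamma$ in $\Cone_\omega(\Gamma)$ I assign $\delta(\gamma)\in\pi_0(C)$ by lifting $\gamma$ to a path in $\Cone_\omega(\tilde\Gamma)$ and recording the component of the fiber by which its two endpoints differ; additivity under concatenation makes $\delta$ a homomorphism into the abelian group $\pi_0(C)$. Well-definedness and injectivity come from the same input: a loop of trivial holonomy lifts to an honest loop in the simply connected space $\Cone_\omega(\tilde\Gamma)$, hence bounds upstairs and therefore downstairs, so $\ker\delta=0$ and $\pi_1(\Cone_\omega(\Gamma))$ embeds in $\pi_0(C)$ and is abelian. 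For nontriviality I would exhibit one explicit loop of nonzero holonomy, built from the Abels commutator identity at scale $n$ --- the same loops witnessing that $A_4(\Z[1/p])/Z$ fails to be finitely presented --- and check, using $d_{\tilde\Gamma}(1,e_{14}(n))\asymp\log|n|$, that its holonomy is a nonzero element of $\pi_0(C)$. As nothing here depends on $\omega$ beyond its being non-principal, both abelianness and nontriviality hold for every asymptotic cone at once.

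The main obstacle is exactly the lifting step, since --- as stressed in Chapter 1 --- the projection $\Cone_\omega(\tilde\Gamma)\to\Cone_\omega(\Gamma)$ is not a locally trivial fibration, so the exact sequence is not available off the shelf. What replaces it is a quantitative homotopy-lifting statement for loops and for homotopies between them: one must lift paths in $\Cone_\omega(\Gamma)$ to $\Cone_\omega(\tilde\Gamma)$ with controlled central defect. This is where the quadratic Dehn function of $\tilde\Gamma$ re-enters at finite scale, providing fillings of bounded area whose central holonomy grows only linearly; these uniform filling-with-controlled-holonomy estimates pass to the ultralimit and yield precisely the lifting needed. Establishing them is the technical core; the rest is formal.
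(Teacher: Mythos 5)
Your strategy --- present the polycyclic group as a central quotient $\tilde\Gamma/Z$ of a group with quadratic Dehn function (hence, by Papasoglu, simply connected cones) and exponentially distorted center, and read $\pi_1$ of the cone of the quotient as a central holonomy --- is exactly the mechanism of \cite{CTAB1}, to which the m\'emoire delegates this corollary. But your construction fails at the first step: $A_4(\R)$ admits no lattice, cocompact or otherwise. Writing $A_4(\R)=U\rtimes D$ with $D\simeq(\R^*)^2$, the trace of $\mathrm{ad}\,(\mathrm{diag}(0,t_2,t_3,0))$ on the Lie algebra of $U$ is $t_2-t_3$, so $A_4(\R)$ is not unimodular, and a locally compact group containing a lattice must be unimodular. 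This is not a notational slip: a rank-two group of units lives in a totally real cubic field and sits cocompactly only in the norm-one hypersurface of the product over all three archimedean places, so the polycyclic group has to be realized as a lattice in a unimodular group of the shape $\bigl(\prod_v U(K_v)\bigr)\rtimes\R^2$. The quadratic Dehn function of that ambient group is not the quoted theorem on $A_4(\K)$ for a single local field and must be established separately (it is, in \cite{CTAB1}); this is one reason the corollary is not a one-line consequence of the displayed theorem.

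The second problem is that the part you defer is the actual content, and your formalization of the holonomy cannot work as stated. Since $C=\Cone_\omega(Z,d_{\tilde\Gamma})$ is totally disconnected, $\pi_0(C)=C$, so a holonomy valued in $\pi_0(C)$ would be an honest element of $C$ and would require uniqueness of path lifts; as the m\'emoire stresses, the projection of cones is nothing like a covering or a fibration, and two lifts of the same loop can end at wildly different points of the fiber. The invariant one can hope to define lives in a quotient $C/C_0$, where $C_0$ is the subgroup of central defects realized by lifts of null-homotopic (equivalently, filled) loops, and the nontriviality of $\pi_1(\Cone_\omega(\Gamma))$ is precisely the assertion that $C_0\neq C$. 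That is a lower bound on the central defect of \emph{every} filling of the Abels loop, uniform over all fillings of at most the relevant area --- not the exhibition of one loop with one lift of nonzero defect, which proves nothing. Abelianness would indeed follow formally once the identification with a quotient of $C$ is in place, but as written neither the existence of your group nor the nontriviality of the fundamental groups of its cones is established.
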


En effet parmi les exemples précé\-demment connus parmi les groupes polycycliques, ou bien le groupe fondamental de tout cône asymptotique est trivial, ou bien le groupe fondamental de tout cône asymptotique contient un groupe libre de rang non dénombrable.

Passons maintenant aux résultats plus difficiles de \cite{CTge}. On va se restreindre tout de suite à des groupes de la forme $G=U\rtimes A$, où $U$ est un produit de groupes unipotents sur divers corps localement compacts non discrets de caractéristique zéro, et $A$ est abélien. Cette hypothèse permet de se concentrer sur le fond du problème; dans le cas d'un groupe algébrique $p$-adique, on peut s'y ramener en passant à un sous-groupe cocompact; dans le cas d'un groupe de Lie réel connexe, on peut s'y ramener, mais de manière plus complexe, et avec un coût polynomial sur la fonction de Dehn \cite[Chap.\ 3]{CTge}. Le groupe $U$ possède un dévissage $A$-équivariant en des sous-quotients $U_i$ abéliens, de manière que chaque $U_i$ est un espace vectoriel non nul sur un corps localement compact non discret $\K_i$ de caractéristique zéro, de manière que l'action de $A$ sur $U_i$ soit monomodulaire (mais pas forcément non distale). Pour $g\in A$, on note $\alpha_i(g)$ le logarithme du module commun des valeurs propres de $A$ sur $U_i$. Ainsi $\alpha_i$ est un morphisme continu de $A$ dans $\R$ (nul si et seulement si l'action sur $U_i$ est distale). Les $\alpha_i$ sont appelés {\em poids} de $G$ (par abus, puisque cela dépend a priori de la décomposition $G=U\rtimes A$, quoique de façon non essentielle); ils vivent dans l'espace vectoriel réel de dimension finie $\Hom(A,\R)$ (qui lui-même est inclus dans $\Hom(G,\R)$, ce dernier ne dépendant pas des choix). Les $\alpha_i$ tels que $\K_i$ est non-archimédien sont appelés {\em poids non-archimédiens}. Les poids de $G/[U,U]$ sont appelés {\em poids principaux}. On supposera que 0 n'est pas un poids principal; sous cette hypothèse on dira que $G$ est {\em résoluble standard}. À nouveau, ce n'est pas une restriction essentielle et on peut s'y ramener. Sous ces hypothèses, l'ensemble des poids et des poids principaux, vu comme sous-ensemble de $\Hom(G,\R)$, ne dépend pas de la décomposition en produit semi-direct. La donnée des poids gouverne de manière essentielle la géométrie du groupe $G$. Notons que le semi-groupe additif engendré par les poids principaux contient tous les poids.

Par exemple, la condition que 0 n'est pas dans l'enveloppe convexe des poids (ou de manière équivalente, des poids principaux), est très naturelle. Elle a été considérée dans le contexte des groupes de Lie par Azencott-Wilson et Varopoulos; dans le contexte des groupes $p$-adiques par Abels. Si cette condition est vérifiée pour un groupe résoluble standard, on dit que $G$ est {\em modéré} (en anglais: {\em tame}). 

\begin{exe}
Pour tout $d\ge 0$, dans $\SL_d(\R)$, le groupe des matrices triangulaires supérieures est résoluble standard modéré. Par exemple, le groupe affine de la droite est modéré. En revanche, le groupe SOL ne l'est pas; les groupes d'Abels $A_d(\R)$ non plus pour $d\ge 3$.
\end{exe}

%modéré implique fortement 2-modéré, qui implique modéré. 

L'hypothèse du théorème \ref{stam} s'exprime simplement en termes de poids. Pour cela introduisons la terminologie: on dit que $G$ est fortement 2-modéré si 0 n'est dans l'enveloppe convexe d'aucune paire de poids, et 2-modéré si 0 n'est dans l'enveloppe convexe d'aucune paire de poids principaux. Remarquons que 
\[\textnormal{modéré}\Rightarrow\textnormal{fortement 2-modéré}\Rightarrow\textnormal{2-modéré}\]
L'hypothèse du théorème \ref{stam} est simplement que $G$ est fortement 2-modéré. Comme on a indiqué précé\-demment, cette hypothèse est restrictive, ainsi pour $d\ge 4$ le groupe d'Abels $A_d(\R)$ est 2-modéré mais pas fortement 2-modéré: en effet, par exemple on peut dessiner les poids du groupe d'Abels $A_4(\R)$, qui vivent dans un plan, comme ceci:

$$\begin{array}[c]{ccccc} 
 && \mathbf{23} &&\\
 13 &&&& 24\\
 && \underline{14} &&\\
 \mathbf{12} &&&& \mathbf{34}
\end{array}$$
Ici les poids principaux sont indiqués en gras, les autres sont non principaux et le 14 souligné indique qu'il s'agit du poids nul. 

Il découle des résultats d'Abels que si $G/G^\circ$ n'est pas 2-modéré, alors $G$ n'est pas compactement présenté. La preuve se ramène essentiellement à utiliser une version localement compacte du théorème de scindage de Bieri-Strebel \cite{BiS}.

Dans \cite{CTge}, on démontre

\begin{thm}
Si $G$ (résoluble standard) n'est pas 2-modéré, sa fonction de Dehn est au moins exponentielle.
\end{thm}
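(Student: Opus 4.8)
\textbf{Step 1: unwinding the hypothesis.} Since $G$ is standard solvable, $0$ is not a principal weight, so the failure of 2-tameness means there are principal weights $\alpha,\beta$ and $t\in(0,1)$ with $t\alpha+(1-t)\beta=0$; as both are nonzero this forces $\beta=-c\,\alpha$ with $c>0$. Pick $g\in A$ with $\alpha(g)>0$, hence $\beta(g)<0$: conjugation by $g$ dilates the $\alpha$-weight line of $U/[U,U]$ and contracts the $\beta$-weight line. Fixing nonzero $u,v\in U$ lifting generators of these two lines and setting $N=e^{n\alpha(g)}$, $M=N^{c}=e^{-n\beta(g)}$, the elements $g^{n}ug^{-n}$ and $g^{-n}vg^{n}$ represent vectors of norm $\asymp N$ and $\asymp M$ in those lines while having word length $O(n)$. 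This exponential distortion of the two weight lines is the whole engine; it is exactly the $\mathrm{SOL}$ configuration.

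\textbf{Step 2: reduction to the metabelian quotient.} Work with compatible presentations for the projection $\pi\colon G\twoheadrightarrow \bar G=G/[U,U]$, so that projecting any van Kampen diagram in $G$ gives one in $\bar G$ with no more cells; hence $\mathrm{Area}_{G}(\gamma)\ge \mathrm{Area}_{\bar G}(\pi\gamma)$ for every loop $\gamma$ null-homotopic in $G$. In $\bar G$ the two lines commute, so $\bar w_{n}=[\,\pi(g^{n}ug^{-n}),\,\pi(g^{-n}vg^{n})\,]$ is a genuine null-homotopic loop of length $O(n)$. When $U$ is non-abelian the same word $w_{n}$ equals in $G$ an element $z\in[U,U]$ rather than $1$; but $z$ again carries a nonzero weight on some layer of $[U,U]$ (generically $\alpha+\beta$, valid when $c\neq1$; otherwise one passes to a deeper layer) and is therefore itself exponentially distorted, so $z=g^{m}\tilde z g^{-m}$ for a lift $\tilde z\in[U,U]$ and $m=O(n)$. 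Then $\gamma_{n}=w_{n}\,(g^{m}\tilde z g^{-m})^{-1}$ is null-homotopic in $G$, still of length $O(n)$, and $\pi\gamma_{n}=\bar w_{n}$ since the correcting factor dies in $\bar G$. Thus it suffices to bound $\mathrm{Area}_{\bar G}(\bar w_{n})$ from below, i.e.\ to treat the abelian-$U$ (metabelian) case, which is precisely the lower-bound regime of Theorem~\ref{Gij}.

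\textbf{Step 3: the area bound in the metabelian model.} The projection $\pi\gamma_{n}=\bar w_{n}$ bounds, in the $(u,v)$-flat of $\bar G$, the rectangle $[0,N]\times[0,M]$, and I would certify its filling cost by an invariant $2$-form. Let
\[\omega=du\wedge dv,\qquad \int_{[0,N]\times[0,M]}\omega=NM\asymp e^{(1+c)n\alpha(g)}.\]
Because $\omega$ is closed, the enclosed area equals the sum of the $\omega$-areas of the cells of any $\bar G$-diagram of $\bar w_{n}$; and when the configuration is \emph{balanced} ($c=1$, so $\alpha+\beta=0$, the genuine $\mathrm{SOL}$ case) $\omega$ is invariant under all of $A$ and under $U$-translation, so each relator contributes a position-independent amount bounded by a constant $C$. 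Hence $\mathrm{Area}_{\bar G}(\bar w_{n})\ge NM/C$, and combining with Step~2 gives $\mathrm{Area}_{G}(\gamma_{n})\ge e^{2n\alpha(g)}/C$ against $|\gamma_{n}|=O(n)$: an exponential Dehn function.

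\textbf{Main obstacle.} The delicate point is the unbalanced case $c\neq1$: then no $2$-form built from $du\wedge dv$ is simultaneously $A$- and $U$-invariant (the corrected form $|u|^{c-1}du\wedge dv$ is $A$-invariant but not translation-invariant), and the test loop must travel to $g$-coordinate $+n$ to expand $u$ and to $-n$ to expand $v$, so one end necessarily sits where the naive area form is exponentially magnified and the per-cell bound fails. Overcoming this needs a genuinely combinatorial corridor/sweeping estimate controlling the two ends separately, which is exactly where the commutativity assumption of Theorem~\ref{Gij} ``hides part of the argument'' and where the bulk of \cite{CTge} is spent. A secondary point is uniformity over the fields $\K_{i}$: over a non-archimedean $\K_{i}$ one replaces differential forms by the corresponding invariant combinatorial area cocycle on the Cayley $2$-complex, using that each rank-one piece is Gromov-hyperbolic, but the same invariance bookkeeping then applies.
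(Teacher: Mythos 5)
The memoir itself contains no argument for this theorem: it is stated as a result of \cite{CTge}, with the exponential lower bound explicitly deferred to \cite[Chap.~8]{CTge}. Your proposal begins along compatible lines --- extracting a pair of negatively proportional principal weights $\beta=-c\alpha$ (correct, given that $0$ is not a principal weight for a standard solvable group), forming the SOL-type test words $w_n=[g^nug^{-n},g^{-n}vg^n]$ of length $O(n)$, and pushing the area computation to a metabelian or rank-one quotient. But the decisive step, the exponential lower bound on the area, is only carried out when $c=1$, via the bounded closed left-invariant $2$-form $du\wedge dv$. As you yourself observe, for $c\neq 1$ no such form exists: the left-invariant extension $e^{(c-1)t}\,du\wedge dv$ is not closed (the relevant rank-one quotient fails to be unimodular on the $(u,v)$-directions), so the Stokes certificate collapses, and the ``combinatorial corridor/sweeping estimate'' you invoke to replace it is precisely the content of the theorem, not a detail one can leave open. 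Since a generic violation of $2$-tameness has $c\neq1$, your ``Main obstacle'' paragraph is an accurate diagnosis of what is missing rather than a proof. (Incidentally, the remark about the commutativity of $U$ ``hiding part of the argument'' that you quote concerns the quadratic \emph{upper} bound of Theorem~\ref{stam}, not this lower bound.)

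Two further points would need repair even in the balanced case. First, the correction of $w_n$ into a loop $\gamma_n$ that is null-homotopic in $G$: when $c=1$ the commutator $[V_\alpha,V_{-\alpha}]$ lands in the weight-\emph{zero} part of $[U,U]$, so $z$ cannot be written as $g^m\tilde z g^{-m}$ with $\tilde z$ bounded, and ``passing to a deeper layer'' is not an argument; what is actually needed is the strict exponential distortion of $U$ in $G$ (every element of norm $e^{O(n)}$ has word length $O(n)$), which does hold for standard solvable groups but must be invoked, and one must then verify that the correcting subword contributes $o(e^{n})$ to whatever functional certifies the area, since otherwise the passage from $\mathrm{Area}_{\bar G}(\pi\gamma_n)$ to $\mathrm{Area}_{\bar G}(\bar w_n)$ is circular. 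Second, the non-archimedean fields --- which are essential here, and govern the dichotomy between cases~(\ref{ijna}) and~(\ref{ija}) of Theorem~\ref{Gij} --- are treated in a single sentence; constructing an area certificate uniformly in the field and in the unbalanced case, without recourse to differential forms, is exactly what \cite[Chap.~8]{CTge} is written to accomplish.
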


Ici on n'exclut pas la possibilité que $G$ soit non compactement présenté, auquel cas on convient de définir sa fonction de Dehn comme la fonction identiquement égale à $+\infty$.

Il existe une autre condition importante, provenant des extensions centrales. On considère l'algèbre de Lie $\mathfrak{u}$ de $U$ (qui est une algèbre de Lie sur le produit de corps localement compacts qu'il faut). Elle admet une graduation naturelle dans $\Hom(G,\R)$, dictée par les poids. L'homologie de $\mathfrak{u}$ est également graduée (toujours dans $\Hom(G,\R)$). Sa composante en degré zéro est intimement reliée aux extensions centrales de $G$ comme groupe topologique. 

Abels observe que si $H_2(\mathfrak{u}/\mathfrak{u}^\circ)_0\neq 0$, alors $G$ n'est pas compactement présenté. La raison est que cette condition entraîne, à peu de choses près, l'existence d'une extension centrale compactement engendrée, dans laquelle le noyau central n'est pas compactement engendré. Un argument similaire \cite[Chap.\ 7]{CTge} démontre que si $H_2(\mathfrak{u})_0\neq 0$, alors $G$ a une fonction de Dehn au moins exponentielle.

Les résultats négatifs précé\-demment décrits ne sont pas très difficiles; le travail d'Abels pour la présentation compacte puis de Tessera et moi pour la fonction de Dehn, est de montrer que lorsque ces obstructions (non 2-modération ou existence de 2-homologie) ne s'appliquent pas, alors les choses se passent bien. Le principal théorème d'Abels est ainsi celui-ci: si $G/G_0$ est 2-modéré et $H_2(\mathfrak{u}/\mathfrak{u}^\circ)_0=0$, alors $G$ est compactement présenté.

Nos résultats précisent ces résultats au niveau de la fonction de Dehn.

\begin{thm}Soit $G$ un groupe standard résoluble comme ci-dessus.
\begin{enumerate}
\item\label{expmaj} Si $G/G_0$ est 2-modéré et $H_2(\mathfrak{u}/\mathfrak{u}^\circ)_0=0$ alors $G$ a une fonction de Dehn au plus exponentielle;
\item\label{cubi} Si $G$ est 2-modéré et $H_2(\mathfrak{u})_0=0$ alors $G$ a une fonction de Dehn bornée cubiquement.
\end{enumerate}
\end{thm}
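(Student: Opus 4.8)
Le plan est de majorer la fonction de Dehn par un argument de remplissage explicite combinant une mise sous forme normale dans le produit semi-direct $G=U\rtimes A$ avec un remplissage dans la partie unipotente, le tout gouvern\'e par les poids. D'abord, je choisirais une pr\'esentation g\'eom\'etrique adapt\'ee \`a la structure: les g\'en\'erateurs proviennent de $A$ et de chacun des sous-quotients $U_i$, et les relateurs se r\'epartissent en trois familles: (a) les relations de commutation issues de l'alg\`ebre de Lie $\mathfrak{u}$; (b) les relations d'action $aua^{-1}=\alpha_i(a)\cdot u$ pour $a\in A$ et $u\in U_i$; (c) les relations exprimant la commutativit\'e de $A$. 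Le r\^ole de l'hypoth\`ese homologique est de garantir que toute relation de degr\'e z\'ero est cons\'equence de ce jeu de relateurs: dans le cas de (\ref{expmaj}), la condition $H_2(\mathfrak{u}/\mathfrak{u}^\circ)_0=0$ est pr\'ecis\'ement celle d'Abels assurant la pr\'esentation compacte, que j'utiliserais ici sous une forme quantitative.

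Pour (\ref{expmaj}), le remplissage proc\`ede par combing: on pousse les g\'en\'erateurs de $A$ d'un c\^ot\'e du mot en appliquant \`a chaque \'echange la relation (b), ce qui transforme les lettres de $U_i$ en leurs conjugu\'es $e^{k\alpha_i(a)}u$. La 2-mod\'eration de $G/G_0$ assure que pour chaque poids il existe une direction de $A$ le dilatant, de sorte que l'on peut r\'eduire toute syllabe de $U$ en la faisant descendre dans la direction contractante; le co\^ut, dict\'e par la distorsion exponentielle de $U$ dans $G$, produit une borne exponentielle sur l'aire. Une fois le mot combin\'e, sa projection sur $A$ est triviale et se remplit quadratiquement, tandis que sa partie dans $U$ se remplit gr\^ace \`a (a) et \`a la nilpotence de $U$.

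Pour (\ref{cubi}), les hypoth\`eses renforc\'ees permettent d'\'eviter cette descente exponentielle. La 2-mod\'eration de $G$ tout entier (et non seulement de $G/G_0$) permet de traiter efficacement les commutateurs $[U_i,U_j]$: pour toute paire de poids principaux, $0$ n'\'etant pas dans leur enveloppe convexe, il existe une direction de $A$ les dilatant simultan\'ement, ce qui borne polynomialement le co\^ut des relations (a). L'annulation $H_2(\mathfrak{u})_0=0$ (et non seulement sur le quotient) supprime toute obstruction centrale \`a chaque \'etage du d\'evissage. Le remplissage se fait alors par tranches: on d\'ecoupe le lacet de longueur $n$ en $O(n)$ tranches, chacune remplie \`a co\^ut quadratique par l'argument ab\'elien-nilpotent, d'o\`u une aire cubique.

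Le point d\'elicat sera la version quantitative de l'argument homologique: traduire $H_2(\mathfrak{u})_0=0$ en une borne effective sur le nombre de relateurs n\'ecessaires pour remplir la composante de degr\'e z\'ero d'un lacet. Le r\'esultat qualitatif d'Abels fournit la pr\'esentation compacte, mais le raffinement quantitatif (exponentiel contre cubique) exige de suivre pr\'ecis\'ement l'aire \`a travers l'interaction entre le remplissage unipotent et le combing ab\'elien.
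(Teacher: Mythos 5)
Votre esquisse identifie correctement l'endroit o\`u se joue la preuve (rendre quantitative l'hypoth\`ese homologique), mais c'est pr\'ecis\'ement l\`a qu'elle reste vide: le m\'ecanisme qui produit la borne cubique est absent. Le d\'ecoupage du lacet en $O(n)$ tranches remplies quadratiquement n'est pas justifi\'e et ne peut pas l'\^etre tel quel: $G$ n'\'etant que 2-mod\'er\'e et non mod\'er\'e, il n'existe pas de direction de $A$ dilatant simultan\'ement tous les poids, donc pas de forme normale globale ni de combing qui fasse ``descendre'' toutes les syllabes de $U$ \`a co\^ut contr\^ol\'e; c'est exactement l'obstruction qui s\'epare ce th\'eor\`eme du cas mod\'er\'e. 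La preuve de \cite{CTge} proc\`ede autrement: on amalgame les sous-groupes mod\'er\'es maximaux $G_i=U_i\rtimes A$ pour obtenir $\hat{G}=\hat{U}\rtimes A\twoheadrightarrow G$; la 2-mod\'eration force l'extension $\hat{U}\to U$ \`a \^etre centrale; l'hypoth\`ese $H_2(\mathfrak{u})_0=0$ permet d'engendrer ce noyau central par des ``relations de soudage'' explicites (de la forme $[\lambda x,y]-[x,\lambda y]$ pour $x,y$ de poids oppos\'es non nuls, retraduites dans le groupe via Baker--Campbell--Hausdorff). La borne cubique est alors un calcul explicite de l'aire de ces relations de soudage, les relations internes aux sous-groupes mod\'er\'es \'etant d'aire quadratique.

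Le second ingr\'edient manquant est le passage \`a une estimation asymptotique: toute la construction se refait uniform\'ement sur les $\K$-alg\`ebres commutatives $R$, puis se sp\'ecialise \`a l'alg\`ebre $\K^{\mathrm{pol}}$ des suites \`a croissance polynomiale; l'astuce de Gromov ram\`ene une suite de lacets de longueur $n$ \`a des mots $\prod_{j\le M}\gamma_j s_j\gamma_j^{-1}$ avec $M$ born\'e, que l'on interpr\`ete comme un seul lacet dans $G(\K^{\mathrm{pol}})$ et que l'on d\'ecompose en un produit born\'e de conjugu\'es des relations sp\'eciales. Rien dans votre combing ne remplace cette \'etape. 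Enfin, pour (\ref{expmaj}), la d\'emarche du texte n'est pas un combing direct: on applique (\ref{cubi}) \`a $G/G^\circ$ (la majoration exponentielle pour la partie totalement discontinue n'est ni imm\'ediate ni contenue dans les travaux d'Abels), et on invoque la majoration exponentielle, due \`a Gromov, de la fonction de Dehn d'un groupe de Lie connexe quelconque pour traiter $U^\circ\rtimes A$; votre descente ``dans la direction contractante'' bute sur la m\^eme absence de direction contractante commune d\`es que $G/G_0$ n'est pas mod\'er\'e.
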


Ici le principal résultat est (\ref{cubi}). Ici (\ref{expmaj}) combine 2 ingrédients: 
\begin{itemize}\item le fait d'appliquer (\ref{cubi}) à $G/G^\circ$ (dont on n'utilise que la majoration exponentielle, mais qui n'est pas immédiate ni ne découle des travaux d'Abels),
\item et le fait que $U^\circ\rtimes A$ a une fonction de Dehn au plus exponentielle. Ceci découle du fait (plus général) que la fonction de Dehn d'un groupe de Lie connexe arbitraire est au plus exponentielle, résultat affirmé par Gromov avec une esquisse de preuve.
\end{itemize}

Il reste à discuter (\ref{cubi}). Sa preuve consiste dans un premier temps à redémontrer le théorème d'Abels (la présentation compacte), mais en utilisant une ``présentation" qui soit ``utile" pour la suite. Pour cela, on suit les pas suivants, que l'on comparera ensuite à la démarche d'Abels (voir \cite[\S 4.H]{CTge}).

% plusieurs des pas d'Abels, sauf le dernier. Ces pas sont décrits ci-dessous.

\begin{enumerate}[(a)]
\item\label{amal} On considère les sous-groupes modérés maximaux $G_i=U_i\rtimes A$ de $G$. Ici, les $U_i$ sont des sous-groupes de $U$ faciles à déterminer; ils sont en nombre fini. On considère l'amalgame des $\hat{G}$ des $G_i$ le long de leurs intersections; ainsi $\hat{G}=\hat{U}\rtimes A$, où $\hat{U}$ est l'amalgame des $U_i$ le long de leurs intersections. Ce groupe a un morphisme canonique surjectif vers $G$. 
\item\label{2modc} Sous l'hypothèse que $G$ est 2-modéré, on montre que l'extension $\hat{U}\to U$ est centrale.
\item En utilisant de plus l'hypothèse que $H_2(\mathfrak{u})_0=0$, on décrit des générateurs pour le noyau de $\hat{G}\to G$, qu'on appelle ``relations de soudage".
\end{enumerate}

Comparons la démarche à celle d'Abels: (\ref{amal}) est exactement son approche; à ceci près qu'il travaille dans un cadre $p$-adique. Abels ne démontre pas (\ref{2modc}) et demande explicitement si c'est vrai. Il démontre toute fois le résultat analogue en termes d'algèbres de Lie, et au niveau des groupes démontre que $\hat{U}$ est nilpotent ($(s+1)$-nilpotent si $U$ est $s$-nilpotent). Nous utilisons ces deux résultats partiels dans notre preuve que $\hat{U}\to U$ est une extension centrale.

En tant qu'amalgame de groupes compactement présentés au dessus de sous-groupes compactement engendrés, le groupe $\hat{G}$ a une présentation bornée au dessus d'un système de générateur compact. Ainsi si $\hat{G}\to G$ est un isomorphisme, $G$ est compactement présenté, et notre argument est dans ce cas le même que celui d'Abels. Toutefois ce n'est pas toujours le cas. La démarche restante diffère drastiquement: Abels (qui travaille avec $G$ totalement discontinu) considère un ``amalgame" de $\hat{G}$ et d'un sous-groupe compact ouvert au dessus de leurs intersections et montre, en utilisant que $H_2(\mathfrak{u})_0=0$, que l'amalgame obtenu est $G$.

Cette dernière approche n'est pas assez algébrique pour qu'on puisse l'exploiter pour estimer la fonction de Dehn. On explicite, à la place, des générateurs pour le noyau de $\hat{U}\to U$. Ces générateurs ont une forme simple dans le contexte de l'algèbre de Lie: l'algèbre de Lie $\hat{\mathfrak{u}}$ de $\hat{U}$ (qui a bien un sens car on démontre que $\hat{U}$ est un groupe nilpotent uniquement divisible) est une algèbre de Lie sur les rationnels; le noyau (central) de $\hat{\mathfrak{u}}\to\mathfrak{u}$ est engendré comme groupe par les élements la forme $[\lambda x,y]-[x,\lambda y]$ où $x,y$ parcourent les éléments de poids non nuls opposés et $\lambda$ parcourt les scalaires. La formule de Baker-Campbell-Hausdorff inverse permet de traduire ces termes à l'aide des éléments des $G_i$ et de la loi de groupe uniquement. Les formules obtenues sont assez indigestes mais sont des formules ``closes", ne dépendant que de la classe de nilpotence; ainsi les éléments du noyau sont de longueur bornée par rapport au système générateur et le groupe $G$ est compactement présenté.

Maintenant pour expliquer comment on en déduit une estimation de la fonction de Dehn, on fait la démarche qui précède en ``changeant d'algèbre". Pour simplifier l'exposition de l'esquisse qui suit (voir \cite[\S 4.K]{CTge} pour un argument rigoureux), on va supposer que tous les corps $\K_i$ sont égaux à un même $\K$. On peut écrire $U=\mathbb{U}(\K)$, où $\mathbb{U}$ est un groupe algébrique unipotent défini sur $\K$. Le point est de faire l'approche qui précède simultanément pour toutes les $\K$-algèbres commutatives $R$: on définit ainsi l'amalgame $\widehat{\mathbb{U}(R)}$ des $\mathbb{U}_i(R)$. On pose $G(R)=\mathbb{U}(R)\rtimes A$ et $\widehat{G(R)}=\widehat{\mathbb{U}(R)}\rtimes A$. Alors tout ce qui précède reste vrai: si $G$ est 2-modéré alors le noyau $Z(R)$ de $\widehat{\mathbb{U}(R)}\to\mathbb{U}(R)$ est central, et si de plus $H_2(\mathfrak{u})_0=0$, alors ce noyau est engendré par les ``relations de soudure" (paramétrées par un produit convenable des $\mathbb{U}_i(R)$, où les $\mathbb{U}_i$ et la paramétrisation ne dépendent pas de $R$).

Ce résultat de présentation, uniforme sur toutes les algèbres, s'applique lorsque $R$ est l'algèbre $\K^{\textnormal{pol}}$ des suites d'éléments de $\K$ croissant au plus polynomialement: si on a une suite de lacets combinatoires dans $G$ de longueur croissant linéairement, on doit commencer par l'interpréter comme un seul lacet combinatoire dans $G(\K^{\textnormal{pol}})$. Ceci n'est pas possible pour une suite de lacets combinatoires arbitraire, mais est faisable pour une suite de lacets combinatoires $(u_n)$ de la forme $u_n=\prod_{j=1}^M \gamma_js_j\gamma_j^{-1}$, où $M$ ne dépend pas de $n$, les $\gamma_i$ sont des mots dans $A$ (de taille au plus linéaire) et les $s_i$ sont des générateurs dans $\bigcup U_i$. L'astuce de Gromov permet en effet de se ramener à des mots de cette forme. 

Une fois $(u_n)$ interprété comme un lacet combinatoire dans $G(\K^{\textnormal{pol}})$, celui-ci est conséquence des relateurs: cela permet de le décomposer comme un produit de conjugués de relateurs à coefficients dans $\K^{\textnormal{pol}}$. Cela décrit chaque $u_n$ comme un produit borné de conjugués de relations spéciales, à savoir d'une part les relations à l'intérieur des sous-groupes modérés (qui ont une aire au plus quadratique), d'autre part les relations de soudage. Un calcul explicite, également basé sur certains préliminaires algébriques, démontre que l'aire des relations de soudage est au plus cubique.

Pour conclure, il suffit de choisir pour $u_n$ une relation, parmi celles de forme spécifiée par l'astuce de Gromov, d'aire maximale parmi les relations de taille au plus $n$. On obtient ainsi que l'aire de $u_n$, c'est-à-dire la fonction de Dehn $\delta(n)$, est au plus cubique!

%mais non pas uniquement pour $G$, mais pour toutes les algèbres commutatives possibles. 

% pour la ``partie connexe" $U^\circ \rtimes A$, et le fait de montrer que dans le cas totalement 

%\bcc
\begin{comment}

GERER $G_0$ vs $G^\circ$

\end{comment}
%\ecc

%Ce théorème mélange deux résultats. L'un est la minoration exponentielle de la fonction de Dehn (pour qui la condition sur $G/G_0$ est superflue); il n'existe pas (à ce jour) de version analogue du théorème de Bieri-Strebel

\bigskip

Terminons par mentionner quelques problèmes. L'un est que nous ne savons pas si la majoration asymptotique cubique est parfois optimale. Cela pourrait dépendre d'invariants fins de l'algèbre de Lie graduée $\mathfrak{u}$.

Un autre problème serait d'appliquer des méthodes similaires en vue de l'estimation de la fonction de Dehn de groupes de Lie nilpotents: rappelons que cette dernière n'est estimée précisément que dans des cas très particuliers. Par exemple, si $U$ est l'unipotent maximal du groupe des isométries du plan hyperbolique octonionique (ce dernier étant localement isomorphe au groupe de Lie exceptionnel $F_{4(-20)}$), $U$ est 2-nilpotent ($Z(U)=[U,U]$ étant de dimension 7 et $U/[U,U]$ de dimension 8), la fonction de Dehn n'est pas connue: elle est au plus cubique comme pour tout groupe 2-nilpotent, et strictement plus grande que quadratique par un résultat de Wenger~\cite{Wen}.

%\begin{itemize}\item 
%\item si $\K_i$ est archimédien, alors 
%\end{itemize}

\chapter[Groupes hyperboliques moyennables]{Groupes localement compacts hyperboliques moyennables}

Ce chapitre est une synthèse des articles suivants:
\begin{itemize}
\item \cite{CAM} (avec R.~Tessera) {\em Contracting automorphisms and $L^p$-cohomology in degree one}. Ark. Mat., 30 pages (2011),
\item \cite{CCMT} (avec P-E.~Caprace, N.~Monod, R.~Tessera) {\em Amenable hyperbolic groups}. 40 pages, soumis.
\item \cite{CQF} {\em On the commability and quasi-isometry classification of focal groups}. 24 pages, soumis.
\end{itemize}

Le point de départ de ces travaux est une esquisse par Pierre Pansu en réponse à une question que je lui ai posé (nov.\ 2005), d'une caractérisation, parmi les groupes de Lie connexe, de ceux qui sont Gromov-hyperboliques. La réponse étant, en terme vagues: ``essentiellement" les seuls sont ceux qui admettent une métrique riemannienne invariante à courbure sectionnelle strictement négative. Ces derniers ont été caractérisés par Heintze: en dimension $\le 2$, ce sont ceux qui admettent une décomposition en produit semi-direct $U\rtimes\R$, l'action de $\R$ sur le groupe nilpotent simplement connexe $U$ étant contractante. Appelons ces derniers groupes de Heintze (en y adjoignant $\R$ et le groupe trivial). Le lemme \ref{lemtr} (ou une variante idoine) permet de ramener au problème précis suivant: montrer que tout groupe triangulable non de Heintze n'est pas Gromov-hyperbolique.

Mon approche était la suivante: on sait que pour un groupe localement compact Gromov-hyperbolique, les cônes asymptotiques sont des arbres réels (et en particulier sont simplement connexes, et de dimension topologique 1) et la fonction de Dehn croît linéairement. Or si $G$ est un groupe de Lie triangulable et $E$ son radical exponentiel, alors 
\begin{itemize}
\item soit $\dim(G/E)\ge 2$ et alors les cônes aysmptotiques de dimension $\ge 2$ par le résultat principal de \cite{CJT};
\item soit $\dim(G/E)=1$, et alors soit $G$ est de Heintze, soit ses cônes asymptotiques sont non simplement connexes et sa fonction de Dehn croît exponentiellement. Avec Romain Tessera, nous avons montré ces deux résultats, mais seul le second a été rédigé pour l'instant \cite[Section 8]{CTge}, et ce postérieurement à l'approche par la cohomologie $L^p$ décrite ci-après.
\end{itemize}

L'approche de Pierre Pansu, complétée par Tessera, est différente, et donne un point de vue complémentaire.
\begin{itemize}
\item Le premier point était de vérifier que pour un groupe localement compact Gromov-hyperbolique $G$, on a $\overline{H^{1,p}}(G)\neq 0$ si $p$ est assez grand. Ici $\overline{H^{1,p}}(G)$ est la cohomologie $L^p$ réduite  en degré 1, que l'on convient ici de définir comme la cohomologie réduite de la représentation régulière droite de $G$ dans $L^p(G)$, $G$ étant muni d'une mesure de Haar à gauche. Ce résultat était connu pour $G$ discret et le cas général, vérifié par Tessera \cite[Theorem 9.2]{T}, est basé sur la même méthode.
\item Le second point est de montrer que si $G$ est un groupe de Lie triangulable et n'est pas de Heintze, alors sa cohomologie réduite en degré 1 $\overline{H^{1,p}}(G)$ s'annule pour tout $1\le p<\infty$. On distingue deux cas:
\begin{itemize}
\item $G$ n'est pas unimodulaire. Alors cette annulation est un résultat de Pansu (datant de 1995, mais publié en 2007 \cite{Pan07} avec en vue cette application à la non-hyperbolicité.)
\item $G$ est unimodulaire. Alors cette annulation est le résultat principal de Tessera \cite{T}.
\end{itemize}
\end{itemize}

Le travail de \cite{CAM} est le suivant: d'une part, il donne un énoncé précis caractérisant les groupes de Lie connexes Gromov-hyperboliques (pas forcément triangulables), l'énoncé (provisoire) de \cite{T} étant le fait d'être quasi-isométrique à un groupe de Heintze. 

\begin{thm}[Cornulier-Tessera \cite{CAM}]
Un groupe de Lie connexe $G$ est Gromov-hyperbolique non élémentaire si et seulement si l'une (et une seule!) des deux conditions suivantes est vérifiée:
\begin{itemize}
\item $G$ est isomorphe à un produit semi-direct $N\rtimes (K\times\R)$ où $N$ est un groupe de Lie simplement connexe nilpotent, $K$ est un groupe de Lie compact connexe, et l'action des éléments positifs de $\R$ sur $N$ est contractante;
\item le quotient de $G$ par son sous-groupe compact distingué maximal est un groupe de Lie connexe simple adjoint de rang 1.
\end{itemize}
\end{thm}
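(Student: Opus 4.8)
The plan is to prove the two implications separately, the converse (structural hypotheses $\Rightarrow$ hyperbolicity) being essentially formal and the direct implication carrying all the weight. For the converse, suppose first that $G/W$ is rank-$1$ adjoint simple, where $W$ is the maximal compact normal subgroup; then $G/W$ acts properly cocompactly by isometries on its rank-$1$ symmetric space, which has pinched negative curvature and is in particular Gromov-hyperbolic, so $G/W$ --- and hence the quasi-isometric $G$ --- is non-elementary hyperbolic. In the first case $N\rtimes\R$ is a Heintze group, so by Heintze's theorem it carries a left-invariant Riemannian metric of strictly negative curvature and is hyperbolic; since $K$ is compact, $N\rtimes(K\times\R)$ is quasi-isometric to $N\rtimes\R$ and therefore hyperbolic as well, non-elementary as soon as $N\neq 1$.

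For the direct implication, assume $G$ is non-elementary Gromov-hyperbolic. First I would use Lemma~\ref{lemtr} to replace $G$ by a quasi-isometric triangulable Lie group $T$; since hyperbolicity is a quasi-isometry invariant, $T$ is again non-elementary hyperbolic, and the whole problem reduces to classifying hyperbolic triangulable groups. Let $E$ be the exponential radical of $T$. Because the asymptotic cone of a hyperbolic group is a real tree, hence of topological dimension at most $1$, Theorem~\ref{codimre} (which identifies this dimension with $\dim(T/E)$) forces $\dim(T/E)\le 1$; and $\dim(T/E)=0$ would make $\Cone_\omega(T)$ a point, i.e. $T$ bounded, contradicting non-elementarity. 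Hence $\dim(T/E)=1$ and $T\cong E\rtimes\R$.

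The crux is to show that such a $T$ is necessarily of Heintze type, equivalently that a triangulable non-Heintze group cannot be hyperbolic. Here I would appeal to the reduced $L^p$-cohomology dichotomy already in play in the excerpt: a locally compact hyperbolic group satisfies $\overline{H^{1,p}}(G)\neq 0$ for $p$ large (Tessera, extending the discrete case), whereas a triangulable non-Heintze group has $\overline{H^{1,p}}=0$ for every $1\le p<\infty$ (Pansu in the non-unimodular case, Tessera in the unimodular case); the two are incompatible, so the $\R$-action on $E$ must be contracting and $T$ is Heintze. The purely geometric route is equally available, since a non-Heintze $T$ with $\dim(T/E)=1$ has non-simply-connected asymptotic cones and exponential Dehn function, each already ruling out hyperbolicity. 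I expect this step --- ruling out the non-Heintze case --- to be the genuine obstacle, the vanishing of reduced $L^p$-cohomology being the essential analytic input.

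It remains to recover the structural normal form for the original $G$. Using the Levi decomposition together with $W$, the semisimple part of $G$ must have real rank at most $1$, since higher rank would reintroduce flats and push $\dim\Cone_\omega(G)$ above $1$; I would then split according to whether this part is trivial. If it is, $G$ is solvable-by-compact with Heintze triangulable model, and locating the contracting one-parameter direction and the compact factor puts $G$ into the form $N\rtimes(K\times\R)$ of the first case; if the semisimple part is a nontrivial rank-$1$ factor, the radical is forced to become compact and $G/W$ is rank-$1$ adjoint simple, giving the second case. Finally the exclusivity asserted by ``one and only one'' is immediate from amenability: the first family is solvable-by-compact and hence amenable, whereas a non-compact rank-$1$ simple group is not.
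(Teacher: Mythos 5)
Your proposal follows essentially the same route the chapter describes: reduction to a triangulable model via Lemma~\ref{lemtr}, the asymptotic-cone dimension bound of Theorem~\ref{codimre} forcing $\dim(T/E)=1$, and then the dichotomy ruling out non-Heintze groups either by the $L^p$-cohomology argument (non-vanishing for hyperbolic groups versus the Pansu--Tessera vanishing for triangulable non-Heintze groups) or by the non-simply-connected cones and exponential Dehn function --- both of which are exactly the complementary approaches recorded in the text. The only place where your sketch is thinner than the actual content of \cite{CAM} is the last step, passing from ``quasi-isom\'etrique \`a un groupe de Heintze'' to the precise isomorphism type $N\rtimes(K\times\R)$, which is precisely the refinement that paper was written to supply; but you correctly identify that this step remains to be done.
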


L'autre part du travail dans \cite{CAM} est de reprouver les résultats d'annulation et de non-annulation de cohomologie $L^p$ en degré 1 de Pansu \cite{Pan07} en utilisant la représentation régulière droite, permettant de les étendre en dehors du contexte des groupes de Lie. La définition utilisée par Pansu invoque l'espace des fonctions $L^p$ dont le gradient (au sens des distributions) est $L^p$. La représentation régulière droite permet de définir $\overline{H^{1,p}}(G)$ comme ceci: ($G$ est muni d'une mesure de Haar à gauche, $1\le p<\infty$ est fixé et $\rho$ est la représentation régulière droite: $\rho(g)f(h)=f(hg)$): on définit $D^p(G)$ comme l'ensemble des fonctions $f\in L^1_\textnormal{loc}(G)$ telles que $g\mapsto f-\rho(g)f$ est une fonction continue de $G$ dans $L^p(G)$. C'est un espace vectoriel topologique, où $f_i$ tend vers 0 si $f_i-\rho(g)f_i$ tend vers 0 dans $L^p$, uniformément en $g$ sur les compacts; le quotient de $D^p(G)$ par la fermeture de $L^p(G)$ est par définition la cohomologie réduite de la représentation régulière droite; on définit ici $\overline{H^{1,p}}(G)$ de cette manière, l'équivalence avec la définition utilise par Pansu étant établie dans \cite[\S 5]{T} (bien que n'étant pas utilisée dans les applications ci-dessus). 

Le résultat d'annulation dans le cas non-unimodulaire non-Heintze est ainsi étendu, avec une preuve unifiant les cas réels et $p$-adiques, dans \cite[Theorem 2.16]{CAM}. Insistons ici plutôt sur le cas Heintze et son extension dans un cadre plus général, car ils ont des applications plus percutantes à la classification quasi-isométrique.

\begin{defn}
Un automorphisme $\alpha$ d'un groupe localement compact $N$ est une {\em compaction} s'il existe une partie compacte $\Omega\subset G$ qui est $\alpha$-aspirante, au sens où pour toute partie  compacte $K\subset G$, il existe un entier $n\ge 0$ tel que $\alpha(K\cup\Omega)\subset\Omega$. Si, de plus, tout voisinage de 1 dans $G$ est $\alpha$-aspirant, on dit que $\alpha$ est un automorphisme {\em contractant}.
\end{defn}

\begin{rem}Les automorphismes contractants sont très étudiés et bien compris. Bien que beaucoup moins étudiés, les automorphismes compactants étaient précé\-demment connus sous le nom d'automorphisme {\em contractant modulo un sous-groupe compact}, et sont appelés {\em contractants} dans (et uniquement dans) \cite{CAM}. La terminologie ``compactants" est due à \cite{CCMT}. 
\end{rem}

Dans \cite[Appendix A]{CAM}, on étend au cas compactant un résultat de décomposition directe (connexe / totalement discontinu) des contractions, à l'aide duquel on démontre le théorème suivant (dû à Pansu pour un groupe de Lie connexe).

\begin{thm}[{\cite[Theorem 7]{CAM}}]\label{hp0}
Soit $G=N\rtimes\Z$ (ou $N\rtimes\R$) un produit semi-direct, l'action des éléments positifs de $\Z$ ou $\R$ sur $N$ étant compactante. Alors il existe un nombre réel (explicite) $p_0(G)$ tel que pour tout $p\in [1,\infty\mathclose[$, on a $H^{(1,p)}(G)\neq 0$ si et seulement si $p>p_0(G)$.
\end{thm}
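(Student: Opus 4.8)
The plan is to compute $\overline{H^{1,p}}(G)$ directly from its description as $D^p(G)$ modulo the closure of $L^p(G)$ for the right regular representation, and to isolate the threshold from the semidirect structure $G=N\rtimes\R$ (the case $N\rtimes\Z$ being analogous). The first step, which is the genuinely new structural input and what frees the argument from the Lie setting, is to reduce the compacting case to the contracting one. Writing the positive generator as a compacting automorphism $\alpha$ of $N$, I would use the direct-product decomposition of compacting automorphisms into their connected and totally disconnected parts established in \cite[Appendix A]{CAM}: this splits off a compact $\alpha$-invariant subgroup, which is transparent to reduced $L^p$-cohomology, and replaces $N$ by a group on which $\alpha$ acts as a genuine contraction, itself a product of a simply connected nilpotent real factor and a totally disconnected factor. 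One is thus reduced to computing $\overline{H^{1,p}}$ for each factor and assembling.

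For the connected factor the threshold is Pansu's \cite{Pan07}; my aim is to recover it, and to treat the totally disconnected factor on the same footing, through a single trace mechanism. Given $f\in D^p(G)$, written as a function of $(n,t)\in N\times\R$, membership in $D^p$ means that the vertical increment $\partial_t f$ and the horizontal increments $f(n,t)-f(n',t)$ assemble into an $L^p$ datum. Because the flow contracts the $N$-fibres at one end and expands them at the other, the vertical rays converge to the attracting fixed point $\omega$, so $f(\cdot,t)$ becomes asymptotically constant there, while at the opposite end $f$ acquires a genuine boundary trace $\bar f$ on $N=\partial G\setminus\{\omega\}$. The core identification is that a class in $\overline{H^{1,p}}(G)$ is detected by this trace, the traces living in a homogeneous Besov-type space on $N$ attached to the anisotropic dilations $\alpha^t$; functions in $L^p(G)$ have trivial trace, and a class is nonzero precisely when $\bar f$ can be made nonconstant.

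The equivalence then reduces to one threshold. For $p>p_0$ I would radialize a suitable test function on $N$ to produce an explicit $f\in D^p(G)$ with nonconstant trace and detect its class by pairing against a boundary measure, refining the qualitative non-vanishing for large $p$ of \cite[Theorem 9.2]{T} to the sharp range. For $p\le p_0$ I would run an averaging argument along the flow, reconstructing $f$ from its increments by integrating up the contracting direction and showing that the resulting approximation by $L^p(G)$-functions converges, so that every trace is annihilated. The number $p_0(G)$ is precisely the exponent at which these integrals cease to converge; it equals the conformal (Hausdorff) dimension of the boundary in its visual quasi-metric, which in the diagonalizable contracting case is $p_0=\operatorname{tr}(D)/\lambda_{\min}$, the quotient of the trace of the infinitesimal contraction (the exponential volume-growth rate, equivalently minus the logarithm of the modulus of $\alpha$) by its smallest weight, with eigenvalues replaced by moduli of eigenvalues over a splitting extension in the non-archimedean factor.

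The main obstacle I expect is twofold and concentrated in the reduction rather than in the final count. First, establishing the decomposition of an arbitrary compacting automorphism into commuting connected and totally disconnected contracting pieces, and checking that a compact invariant complement really is invisible to reduced $L^p$-cohomology, requires care precisely because $N$ need not be a Lie group. Second, one must verify that the right-regular-representation formalism reproduces Pansu's threshold on the connected factor and yields the same homogeneity on the totally disconnected factor, so that a single $p_0$ can be read off uniformly from the spectral data of $\alpha$; matching the convergence exponent of the averaging argument with the regularity exponent of boundary traces, in this anisotropic and possibly totally disconnected setting, is the delicate point.
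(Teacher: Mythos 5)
Your overall route is the one the text attributes to \cite{CAM}: reduce the compacting case to the contracting one via the connected/totally-disconnected decomposition of \cite[Appendix A]{CAM} (after dividing out a compact normal subgroup --- a quotient with compact kernel rather than a direct factor, but indeed invisible to $\overline{H^{1,p}}$), then run a Pansu-type argument in which boundary values detect non-vanishing for $p>p_0$ and an averaging/approximation along the flow gives vanishing for $p\le p_0$. In the purely connected case your $\operatorname{tr}(D)/\lambda_{\min}$ is exactly the $\log(\delta)/\log(\lambda)$ of the statement, and this part of the proposal matches the intended proof.

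The one concrete problem is the mixed case, precisely where you propose to treat the totally disconnected factor ``on the same footing'' and to take $\lambda_{\min}$ over all weights, archimedean and non-archimedean alike. That is not the correct $p_0$. In the formula $p_0=\log(\delta)/\log(\lambda)$, the numerator does see everything ($\delta=\delta_{\textnormal{c}}\delta_{\textnormal{td}}$, with $\delta_{\textnormal{td}}$ the integral module of the totally disconnected part), but $\lambda$ is the smallest eigenvalue of the action on the Lie algebra of $G_0/W$ \emph{only}: the non-archimedean weights are excluded from the minimum. The two factors are genuinely asymmetric, and the pure totally disconnected case is the sanity check: there $p_0=0$, i.e.\ non-vanishing for every $p\ge 1$ (as for groups acting on trees), whereas a symmetric $\sum_i w_i/\min_i w_i$ would predict a finite positive threshold and hence spurious vanishing for small $p$. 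So whenever the smallest non-archimedean ``eigenvalue'' lies below the smallest real one, your formula overestimates the vanishing range. In your trace/averaging mechanism this means the Cantor directions must contribute only measure (hence only to the volume term in the numerator) and no modulus-of-continuity constraint in the denominator; the ``delicate matching of the convergence exponent with the regularity exponent'' that you flag has to be carried out with this asymmetry built in, and as written your single uniform mechanism would land on the wrong threshold.
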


Le nombre $p_0(G)$ s'explicite comme ceci: d'abord si $G_0$ (la composante connexe de l'unité) est compact on prend $p_0=0$ (ce cas est facile, laissons-le donc de côté). Sinon, soit $t\in G$ un élément strictement négatif du sous-groupe $\Z$ ou $\R$ (selon le cas), soit $\lambda>1$ la plus petite valeur propre de l'action de $\Z$ sur l'algèbre de Lie de $G_0/W$ ($W$ étant son plus gros sous-groupe compact distingué), et soit $\delta\ge\lambda$ la multiplication du volume de $t$ dans $G_0$. Alors $p_0(G)=\log(\delta)/\log(\lambda)\ge 1$ ne dépend pas du choix de $t$ et satisfait le théorème précédent.

Dans le cas où $N$ est un groupe de Lie connexe, $\delta$ est un produit de valeur propres et c'est de cette manière que le résultat est énoncé par Pansu. Ici, $\delta=\delta_{\textnormal{c}}\delta_{\textnormal{td}}$ où $\delta_{\textnormal{c}}$ est la contribution de la partie connexe (un produit de valeurs propres), et $\delta_{\textnormal{td}}$ est un entier $\ge 1$, contribution de la partie totalement discontinue.

Une motivation pour un énoncé sous l'hypothèse du théorème \ref{hp0} est le théorème suivant, qui n'était alors, dans toute sa généralité, qu'une conjecture (à Tessera et moi-même).

\begin{thm}[\cite{CCMT}]\label{mainccmt}
Un groupe localement compact moyennable est Gromov-hyper\-bolique non élémentaire si et seulement s'il est isomorphe à un TAC (tore d'application d'une compaction), au sens où il admet une décomposition en produit semi-direct $G=N\rtimes\Z$ ou $N\rtimes\R$, l'action des éléments positifs de $\Z$ ou $\R$ sur $N$ étant compactante. 
\end{thm}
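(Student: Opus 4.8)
The plan is to establish the two implications separately, organising the argument around Gromov's classification of isometric actions on hyperbolic spaces and, specifically, around the notion of a \emph{focal} action: one fixing a unique point of the boundary while containing hyperbolic isometries. For the easy implication, suppose $G=N\rtimes\R$ (the case $N\rtimes\Z$ being parallel) with the positive half-line acting by a compacting automorphism $\alpha$. A compacting automorphism is, by the remark above, contracting modulo a compact normal subgroup, so after passing to that quotient the geometry is that of a Heintze-type group. Since $\alpha$ is compacting, $G$ is compactly generated (one generates $N$ from $\alpha$ and an aspirating compact set), so one may take a Cayley--Abels graph and verify Gromov hyperbolicity directly: the exponential contraction of the horospherical directions supplied by the compacting hypothesis forces the thin-triangle condition, exactly as in the negatively curved homogeneous setting. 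Non-elementarity is then equivalent to $N$ being non-compact, which ensures that $\partial G$ consists of the fixed point $\xi$ together with a non-trivial parabolic boundary.

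For the hard implication, let $G$ be amenable and non-elementary Gromov-hyperbolic. Hyperbolicity forces $G$ to be compactly generated, so it acts properly cocompactly by isometries on a proper geodesic hyperbolic space $X$ with boundary $\partial X$. The first step is to determine the type of this action. Amenability excludes an action of general type: two independent hyperbolic isometries would yield, by ping-pong, a non-abelian free subgroup, impossible inside an amenable group; equivalently, the north--south dynamics on a limit set of cardinality $\ge 3$ admit no invariant probability measure, whereas amenability provides one. As the action is non-elementary and $G$ contains hyperbolic isometries, the only case left in Gromov's trichotomy is that it is \emph{focal}: there is a unique $G$-fixed point $\xi\in\partial X$ and genuine hyperbolic translations pointing at $\xi$.

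From the focal structure I extract an algebraic splitting. Because $\xi$ is fixed, the Busemann cocycle based at $\xi$ becomes additive and hence defines a continuous homomorphism $\chi\colon G\to\R$; it is nonzero since hyperbolic elements have $\chi\neq 0$, so its closed image is either $\R$ or a discrete copy of $\Z$. Put $N=\Ker\chi$, the closed normal horospherical subgroup stabilising the horospheres centred at $\xi$, so that $G/N$ is $\R$ or $\Z$. The extension $1\to N\to G\to G/N\to 1$ splits: over $\Z$ this is immediate by lifting a generator, while over $\R$ one produces a one-parameter subgroup realising a hyperbolic translation flow toward $\xi$ (the point in this step demanding the most care). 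This yields $G=N\rtimes\R$ or $G=N\rtimes\Z$.

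It remains to recognise the conjugation action as compacting, which I expect to be the principal obstacle. Fix $t$ in the chosen complement with $\chi(t)>0$ and let $c_t$ be conjugation by $t$. Geometrically, $c_t$ pushes the horosphere through the base point toward $\xi$, and in a hyperbolic space horospherical displacement is contracted by a definite exponential factor $\lambda>1$ under such a push; transporting this displacement estimate back into the intrinsic word geometry of $N$ shows that every compact subset of $N$ is eventually absorbed into a fixed compact set under iteration of $c_t$, which is exactly the compacting condition. The delicate part is simultaneously quantitative and structural: one must compare the intrinsic metric of $N$ with the horospherical metric induced from $X$ uniformly in all directions, and one must accommodate a genuine compact defect---this is precisely why the conclusion is ``compacting'' and not ``contracting'', the defect being carried by a maximal compact subgroup on which $c_t$ acts by a bounded, measure-preserving automorphism. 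Controlling this compact part cleanly, rather than in the connected Lie setting where it is absent, is where the substantial work lies.
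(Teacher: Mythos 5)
Your overall architecture coincides with the one used in \cite{CCMT} and sketched above: amenability plus Gromov's ping-pong excludes the general-type case and produces the fixed point $\xi\in\partial X$, the Busemann construction at $\xi$ yields the map onto $\R$ or $\Z$ whose kernel is the horospherical subgroup, and the compacting property of the conjugation is read off from the exponential contraction of horospherical displacement; for the converse the paper likewise has a direct, purely metric proof (and, separately, the millefeuille CAT($-1$) model of the th\'eor\`eme \ref{tacmi}). So the route is the right one, but two of the steps you treat as automatic are exactly the points where the argument can break.

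First, in a general Gromov-hyperbolic space the Busemann function at $\xi$ is only defined up to bounded error, so $g\mapsto b_\xi(gx_0)-b_\xi(x_0)$ is a priori only a \emph{quasimorphism} with defect controlled by the hyperbolicity constant --- which is precisely why the text speaks of the \og quasi-caract\`ere de Busemann\fg{} and not of a character. Your claim that the cocycle \og becomes additive\fg{} is unjustified as stated; one must homogenize the quasimorphism and then use amenability a \emph{second} time (a homogeneous quasimorphism on an amenable group is a genuine homomorphism) to obtain a continuous character $\chi$. Second, even with $\chi\colon G\to\R$ in hand, $G/\Ker\chi$ is the \emph{image} of $\chi$ with the quotient topology, not its closure; nothing in your argument rules out a dense non-closed image such as $\Z+\sqrt{2}\,\Z$, in which case $G/N$ is neither $\Z$ nor $\R$ and no splitting of the asserted form exists. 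Proving that the image is closed requires a genuine input from the focal geometry (it is not a formal consequence of having a nonzero continuous character). The two points you explicitly flag as delicate --- the splitting over $\R$ and the uniform comparison between the intrinsic metric of $N$ and horospherical displacement needed to verify the compacting condition --- are indeed where the substantial work of \cite{CCMT} lies; your sketch of them identifies the correct strategy but does not yet constitute a proof.
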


L'implication que nous ne savions pas prouver en écrivant \cite{CAM} est l'existence d'une telle décomposition, pour un groupe Gromov-hyperbolique moyennable non élémentaire. Il s'agit dans un premier temps de mettre la main sur un morphisme vers $\R$ ou $\Z$. Il est tentant de considérer la fonction modulaire, mais nous sommes arrivés au bout en considérant plutôt le quasi-caractère de Busemann, relativement à un point fixe sur le bord du groupe (ce point fixe existe, car sinon un argument simple de ping-pong dû à Gromov \cite{Gro87} permettrait de construire un sous-groupe discret libre non abélien dans $G$, niant la moyennabilité).  

Pour l'autre implication, une preuve directe se trouve dans \cite{CCMT}: elle a l'avantage d'être purement métrique et de n'utiliser aucun théorème de structure sur les groupes localement compacts et donc de s'étendre à un cadre plus général (bien qu'à cette heure on n'ait pas encore sérieusement exploré cette direction). Une autre preuve, plus élaborée, se trouve également dans \cite{CCMT}, mais fournit une conclusion plus forte.

\begin{thm}\label{tacmi}
Soit $G$ un TAC (tore d'application de compaction, au sens du théo\-rème \ref{mainccmt}) admet une action continue isométrique, propre et cocompacte sur un espace CAT(-1) propre $X$, dit espace ``millefeuille".
\end{thm}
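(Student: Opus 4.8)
\emph{Esquisse de preuve propos\'ee.} Le plan est de construire $X$ comme un assemblage ``\`a la Heintze'' des pi\`eces de rang un associ\'ees au TAC, puis de v\'erifier que $G$ y agit g\'eom\'etriquement. On se ram\`ene d'abord au cas $G=N\rtimes\R$ (le cas $N\rtimes\Z$ s'obtenant en suspendant, ou en travaillant avec un arbre simplicial localement fini plut\^ot qu'un $\R$-arbre). En utilisant le r\'esultat de d\'ecomposition directe des compactions (celui de \cite{CAM}, \'etendant la th\'eorie classique des groupes de contraction de Siebert et Gl\"ockner--Willis), on fixe le plus grand sous-groupe compact distingu\'e $\alpha$-invariant $W\trianglelefteq N$; l'automorphisme induit sur $N/W$ est alors \emph{contractant}, et $N/W$ se scinde en un produit direct $N_c\times N_{\mathrm{td}}$, o\`u $N_c$ est un groupe de Lie nilpotent simplement connexe et $N_{\mathrm{td}}$ est totalement discontinu, chacun muni d'un automorphisme contractant. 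C'est le sous-groupe $W$ qui refl\`ete le fait que l'automorphisme de d\'epart est seulement compactant et non contractant.

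\textbf{Pi\`eces de rang un.} Le groupe $S_c=N_c\rtimes\R$ est un groupe de Heintze r\'eel: par le th\'eor\`eme de Heintze il porte une m\'etrique riemannienne invariante \`a gauche \`a courbure sectionnelle pinc\'ee strictement n\'egative, que l'on renormalise pour obtenir une courbure $\le -1$; \'etant simplement connexe et complet, l'espace obtenu $Y$ est CAT$(-1)$, muni d'une fonction de Busemann $\beta_Y:Y\to\R$ vers son point attractif $\xi_Y\in\partial Y$. De m\^eme, chaque facteur de rang un de la partie totalement discontinue fournit un arbre (r\'eel) $T_i$ sur lequel le groupe correspondant agit g\'eom\'etriquement, muni d'une fonction de Busemann $\beta_{T_i}$ vers un point attractif.

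\textbf{L'espace millefeuille.} On d\'efinit $X$ comme le produit fibr\'e au-dessus de la droite des hauteurs,
$$X=\{(y,\tau_1,\dots,\tau_k)\in Y\times T_1\times\dots\times T_k \mid \beta_Y(y)=\beta_{T_1}(\tau_1)=\dots=\beta_{T_k}(\tau_k)\},$$
muni de la m\'etrique de longueur gauchie dans laquelle, lorsque la hauteur cro\^it vers le point attractif commun $\xi$, \emph{toutes} les horosph\`eres se contractent simultan\'ement. Le point crucial est que toutes les fonctions de Busemann sont orient\'ees dans le \emph{m\^eme} sens (vers $\xi$): sur une carte lisse la m\'etrique prend la forme mod\`ele $dt^2+e^{-2t}(\text{m\'etrique d'horosph\`ere})$, exactement comme dans $\mathbb{H}^n$, d'o\`u une courbure $\le -1$; c'est ce qui distingue radicalement cette construction des produits horosph\'eriques de type Diestel--Leader (orientations oppos\'ees), qui eux ne sont pas hyperboliques. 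Concr\`etement $X$ se pr\'esente comme un arbre de tranches de $Y$ (les r\'egions comprises entre deux horosph\`eres cons\'ecutives), recoll\'ees le long d'horosph\`eres aux sommets des $T_i$: d'o\`u l'image feuillet\'ee justifiant le nom de ``millefeuille''.

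\textbf{V\'erifications et difficult\'e principale.} L'action de $G$ sur chaque facteur pr\'eserve les hauteurs (le facteur $\R$ translate les $\beta$, le sous-groupe $N$ les pr\'eserve), donc $G$ agit sur $X$ par isom\'etries; l'action est cocompacte (un domaine fondamental est une tranche) et propre --- c'est ici qu'intervient $W$: le noyau de l'action de $G$ sur $N/W$ est compact, de sorte que les stabilisateurs sont compacts, et la propret\'e de $X$ r\'esulte de celles de $Y$ et des $T_i$. Le c\oe ur technique est la v\'erification de la condition CAT$(-1)$: sur le lieu lisse le calcul de courbure gauchie ci-dessus donne $\le -1$, mais le lieu de branchement (les horosph\`eres situ\'ees au-dessus des sommets des arbres) demande un argument de recollement \`a la Alexandrov/Reshetnyak. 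La subtilit\'e --- et l'obstacle principal --- est que l'on recolle le long d'\emph{horosph\`eres}, qui ne sont pas convexes, de sorte que les lemmes de recollement usuels ne s'appliquent pas tels quels; il faut exploiter que, dans la m\'etrique gauchie, toute la g\'eom\'etrie s'\'evacue vers l'unique point attractif $\xi$, et v\'erifier directement la minceur des triangles (ou la condition CAT$(-1)$ \`a quatre points) \`a partir de la formule de distance. Les points annexes \`a soigner sont l'uniformit\'e du traitement des cas $\Z$ et $\R$, et le fait que la partie totalement discontinue, dans la g\'en\'eralit\'e localement compacte (au-del\`a du cas $p$-adique), fournisse bien un mod\`ele CAT$(-1)$ de type arbre avec la compatibilit\'e voulue des fonctions de Busemann.
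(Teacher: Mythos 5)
Votre description de l'espace est essentiellement la bonne (produit fibr\'e des fonctions de Busemann orient\'ees dans le \emph{m\^eme} sens, par opposition aux produits horosph\'eriques de type arbolique/Diestel--Leader o\`u le changement de signe d\'etruit l'hyperbolicit\'e), mais votre preuve de la condition CAT$(-1)$ comporte une lacune r\'eelle au point pr\'ecis que vous signalez vous-m\^eme comme \emph{l'obstacle principal}, sans le r\'esoudre. Vous d\'ecoupez l'espace en tranches (r\'egions entre deux horosph\`eres cons\'ecutives) recoll\'ees le long d'horosph\`eres; comme les horosph\`eres ne sont pas convexes, le lemme de recollement de Reshetnyak ne s'applique pas, et renvoyer \`a une v\'erification directe de la condition \`a quatre points \og\`a partir de la formule de distance\fg\ n'est pas un argument. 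L'id\'ee qui manque --- et qui est exactement celle de \cite{CCMT}, reprise dans le texte ci-dessus --- est de changer de d\'ecoupage: on \'ecrit $X_b[k]$ comme r\'eunion des \emph{feuilles} $X_b[k]\cap(X\times D)$, o\`u $D$ parcourt les g\'eod\'esiques bi-infinies de $T_k$ passant par le bout distingu\'e. Chaque feuille est alors une copie isom\'etrique de la vari\'et\'e n\'egativement courb\'ee tout enti\`ere (et non une tranche), et deux feuilles se coupent le long de l'image r\'eciproque par $b$ d'une demi-droite, c'est-\`a-dire le long d'une \emph{horoboule}, qui est convexe, ferm\'ee et compl\`ete. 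Le recollement (it\'er\'e) le long de parties convexes pr\'eserve CAT$(-\kappa)$ par Reshetnyak, et tout triangle g\'eod\'esique \'etant contenu dans une r\'eunion finie de feuilles, la conclusion s'ensuit sans aucun calcul de courbure au lieu de branchement. C'est ce passage des horosph\`eres (non convexes) aux horoboules (convexes) qui fait toute la diff\'erence avec le cas arbolique, o\`u l'on recolle des compl\'ementaires d'horoboules.

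Deux points secondaires. D'abord, aucune m\'etrique gauchie suppl\'ementaire du type $dt^2+e^{-2t}(\cdot)$ n'est \`a construire: la m\'etrique de longueur sur $X_b[k]$ est d\'efinie par la seule condition que chaque feuille se projette isom\'etriquement sur la vari\'et\'e de Heintze, laquelle porte d\'ej\`a sa m\'etrique invariante \`a courbure $\le-\kappa$. Ensuite, la partie totalement discontinue fournit un \emph{seul} arbre r\'egulier (l'arbre de Bass--Serre de la d\'ecomposition en extension HNN ascendante au-dessus d'un sous-groupe compact ouvert), et non une famille $T_1,\dots,T_k$ d'arbres: votre produit fibr\'e \`a plusieurs facteurs arborescents n'est pas la construction voulue et compliquerait inutilement la description des intersections de feuilles. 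En revanche, votre traitement de la propret\'e via le noyau compact $W$ et la r\'eduction au cas contractant sont conformes \`a la d\'emarche de \cite{CAM,CCMT}.
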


Pour expliquer ce qu'est un espace millefeuille, commençons par deux cas particuliers dégénérés:
\begin{itemize}
\item si $N$ est connexe (ou plus généralement $\pi_0(N)$ est compact), alors $X$ est une variété riemannienne (complète simplement connexe) à courbure sectionnelle strictement négative.
\item si $N_0$ est compact, alors $X$ est un arbre ($G$ se voit en effet alors comme extension HNN ascendante d'un groupe compact, dont on prend l'arbre de Bass-Serre).
\end{itemize}
Un millefeuille, en général, est un mélange de ces deux cas: on considère une variété riemannienne $X$ à courbure sectionnelle strictement négative (simplement connexe et complète), et une fonction de Busemann $b$. On considère d'autre part l'arbre $T_k$ régulier de valence $k+1$, muni d'une fonction de Busemann $b'$. Alors $X_b[k]$ est par définition défini comme
$$X_b[k]=\{(x,y)\in X\times T_k\mid b(x)=b'(y)\};$$
il est muni d'une métrique de longueur de sorte que pour toute géodésique $D$ de $T_k$ en restriction de qui $b'$ est bijective, la projection (bijective) de la ``feuille" $X_b[k]\cap (X\times D)$ sur $X$ soit une isométrie. Ainsi, $X_b[k]$ est un recollement itéré de ces feuilles en des parties convexes fermées isométriques; si $X$ est CAT($-\kappa$) avec $-\kappa<0$ alors $X_b[k]$ aussi. Remarquons que topologiquement, $b$ s'identifie à une projection $\R^d\to\R$ et il en découle que $X_b[k]$ est homéomorphe à $\R^{d-1}\times T_k$ (ici $T_k$ est identifié à son 1-squelette).

En outre, si $X$ admet un groupe cocompact d'isométries préservant la classe de $b$ modulo addition de fonctions constantes, alors $X_b[k]$ aussi. On obtient ainsi une classe d'espaces métriques propres à courbure strictement négative ayant un groupe d'isométries cocompact. Les millefeuille purs sont ceux qui ne sont pas dégénérés (i.e.\ réduit à une variété ou à un arbre), ou, de façon équivalente, satisfaisant $k\ge 2$ et $\dim(X)\ge 2$.

Les espaces millefeuilles ont été introduits dans \cite{CCMT}; leur définition ressemble fortement à l'espace \og arbolique\fg  ({\em treebolic})  $\{(x,y)\in X\times T_k\mid b(x)+b'(y)=0\}$ considéré par exemple dans \cite{BSSW} qui ont la même topologie, mais le changement de signe modifie drastiquement la géométrie, puisqu'on recolle des complémentaires d'horoboules (non convexes) au lieu d'horoboules et ces espaces ``arboliques" ne sont essentiellement jamais Gromov-hyperboliques.

Le papier \cite{CQF} s'intéresse à la classification des TAC (appelés ``groupes focaux" dans \cite{CQF}) à quasi-isométrie près.

%, ainsi qu'à commabilité près (voir plus loin). 

Si on a un TAC de la forme $G=N\rtimes\Z$ ou $G=N\rtimes\R$, on dit qu'il est {\em de type connexe} (resp.\ {\em totalement discontinu}) si $N/N_0$ est compact (resp.\ si $N_0$ est compact.) Sinon, on dit que $G$ est {\em de type mixte} (notons qu'un TAC de la forme $N\rtimes\R$ est forcément de type connexe). 

Tous les TAC de type totalement discontinu admettent une action géométrique (c'est-à-dire continue, isométrique, propre et cocompacte) sur un arbre régulier de degré $\ge 3$, et sont donc quasi-isométriques entre eux.

La classification à quasi-isométrie près des TAC de type connexe se ramène à la conjecture classique suivante: si deux groupes de Lie triangulables de la forme $N\rtimes\R$ ($\R$ agissant sur $N$ par contractions) sont quasi-isométriques, alors ils sont isomorphes. De nombreux résultats partiels sont connus, découlant notamment des travaux de Pansu (voir le survol \cite{CFQI}). La thèse d'Hamenst\"adt (1989) affirme résoudre positivement cette conjecture, mais la preuve y est erronée.

Dans le cas mixte un point pas immédiatement évident est de dégager la bonne conjecture. Un premier invariant est le suivant si $G$ est un TAC de type mixte: on considère la fonction modulaire du groupe $G/G_0$; son image est le sous-groupe cyclique de $\Q^*$ engendré par un certain nombre entier $s_G$. Ce nombre n'est pas un invariant QI: en effet si $G=N\rtimes_\sigma\Z$ et $G_k$ est son sous-groupe d'indice fini $N\rtimes_{\sigma^k}\Z$ (d'indice $k$), alors $s_{G_k}=s_G^k$. On définit alors $q_G$ comme le nombre entier $\ge 2$ minimal dont $s_G$ est une puissance entière. On a le théorème récent de Dymarz:

%Les théorèmes \ref{mainccmt} et \ref{tacmi} ont permis d'attirer l'attention sur une classe de groupes très naturelle. Ainsi, Dymarz a montré récemment:

\begin{thm}[Dymarz]
Pour un TAC $G$ de type mixte, le nombre entier $q_G\ge 2$ est un invariant de quasi-isométrie.
\end{thm}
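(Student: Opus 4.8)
The plan is to reduce the statement to a rigidity property of the boundary, using that a mixed-type TAC is, by Th\'eor\`eme \ref{tacmi}, a group acting geometrically on a millefeuille space $X_b[k]$, a proper CAT($-1$) space, hence Gromov-hyperbolic. The crucial bookkeeping is that $k=s_G$: the combinatorial branching of the tree factor $T_k$ is exactly the index of the compaction on the totally disconnected part $N/N_0$, which is the integer $s_G$ read off the modular function of $G/G_0$. First I would recall that, $G$ being amenable (Th\'eor\`eme \ref{mainccmt}), it fixes a unique point $\xi_0$ at infinity; any quasi-isometry $G\to G'$ extends to a homeomorphism of visual boundaries sending $\xi_0$ to $\xi_0'$ (the parabolic fixed point being canonical), quasisymmetric for the visual metrics. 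This is the focal-group viewpoint of \cite{CQF}: quasi-isometries of such groups correspond to quasisymmetries of the punctured (parabolic) boundaries which, moreover, coarsely intertwine the geodesic flows toward $\xi_0$, up to one overall positive rescaling $c$ of the height function $\beta$.

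Next I would exploit the structure of the parabolic boundary of a \emph{mixed} millefeuille: it carries a canonical decomposition, its connected components being leaves modeled on a homogeneous nilpotent (Carnot) group $Y_{\mathrm{c}}$ coming from $N_0$, transverse to a totally disconnected factor $Q$, the boundary of the tree $T_{s_G}$; the flow acts as a homothety synchronized on both. A homeomorphism preserves connected components, so the induced quasisymmetry respects this decomposition, and the central analytic step is that it conjugates the two flows \emph{rigidly}. On the connected factor this is Pansu's quasisymmetric rigidity of Carnot groups: the dilation ratio of the flow is a quasisymmetric conjugacy invariant, so if the flow expands $Y_{\mathrm{c}}$ at logarithmic rate $a$ per unit $\beta$ and $Y_{\mathrm{c}}'$ at rate $a'$, the rescaling is pinned to $c=a'/a$ and the connected parts match. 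This is exactly where mixed type is essential: in the purely totally disconnected case the flow can be rescaled freely (all regular trees are quasi-isometric) and there is no invariant; the connected factor is what rigidifies the scale.

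Finally I would transfer this rigidity to the tree factor. Synchronized with the now-rigid flow, the combinatorial self-similar structure of $Q$ must be conjugated by the quasisymmetry: its branching levels sit at Busemann-heights forming a lattice of spacing $\tau$ in $\beta$, each level splitting into $s_G$ sub-balls, and likewise for $G'$ with spacing $\tau'$ and splitting $s_{G'}$, now measured in $c\beta'$. Matching these discrete structures, after regrouping levels into integer blocks of sizes $p$ and $q$, forces equal spacing and equal branching, i.e. $s_G^{\,p}=s_{G'}^{\,q}$ for some positive integers $p,q$. Since every integer $\ge 2$ has a unique representation as a perfect power with primitive (non-perfect-power) base, the relation $s_G^{\,p}=s_{G'}^{\,q}$ is equivalent to $q_G=q_{G'}$, which is the claim; this is consistent with passage to the finite-index subgroup $N\rtimes_{\sigma^k}\Z$, which sends $s_G$ to $s_G^{\,k}$ while preserving its primitive root.

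I expect the main obstacle to be the analytic core of the second and third paragraphs: proving that a \emph{mere} quasisymmetry of the parabolic boundary genuinely conjugates the synchronized flows and the discrete tree-level structure, rather than respecting them only topologically. This is where the argument must become quantitative, recovering hidden discrete data (the tree levels of $Q$) from coarse information. I would attempt it through coarse differentiation in the spirit of Eskin--Fisher--Whyte on the totally disconnected factor, combined with Pansu differentiation on the connected factor to pin the scaling $c=a'/a$; controlling the interaction of these two mechanisms across the synchronized flow is the delicate point.
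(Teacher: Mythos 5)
Le m\'emoire ne d\'emontre pas cet \'enonc\'e: il est cit\'e comme un th\'eor\`eme de Dymarz et utilis\'e comme bo\^ite noire; il n'y a donc pas de preuve interne \`a laquelle comparer la v\^otre. Cela dit, votre sch\'ema reproduit bien l'architecture de la preuve connue (quasi-sym\'etries du bord parabolique, d\'ecomposition en facteur connexe et facteur totalement discontinu, n\'ecessit\'e du type mixte pour rigidifier l'\'echelle, et la conclusion arithm\'etique $s_G^{\,p}=s_{G'}^{\,q}\Rightarrow q_G=q_{G'}$, qui est correcte). Le point d'identification $k=s_G$ pour l'arbre du millefeuille est \'egalement juste.

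Il y a n\'eanmoins une lacune r\'eelle, et elle porte pr\'ecis\'ement sur ce qui constitue tout le contenu du th\'eor\`eme. D'une part, l'affirmation que toute quasi-isom\'etrie \og entrelace grossi\`erement les flots g\'eod\'esiques \`a un facteur de rescalement global $c$ pr\`es\fg{} (c'est-\`a-dire est \og height-respecting\fg) n'est pas une cons\'equence formelle de l'hyperbolicit\'e et de la moyennabilit\'e: c'est un th\'eor\`eme difficile en soi, et pour les groupes de Heintze g\'en\'eraux c'est encore ouvert. D'autre part, le m\'ecanisme pr\'ecis que vous invoquez pour \'epingler $c$ --- la rigidit\'e quasi-sym\'etrique de Pansu sur le facteur connexe --- ne s'applique pas dans le cas le plus \'el\'ementaire de type mixte: si $N_0$ est de dimension $1$ modulo compact (par exemple $G=(\R\times\Q_p)\rtimes\Z$), le facteur connexe du bord parabolique est une droite r\'eelle, dont les quasi-sym\'etries sont tr\`es flexibles et ne portent aucun \og taux de dilatation\fg{} invariant; la rigidit\'e doit provenir du couplage m\'etrique entre le facteur connexe et le facteur arborescent (c'est ce que fait Dymarz, via une diff\'erentiation grossi\`ere adapt\'ee \`a ce produit), et non du facteur de Carnot seul. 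Vous signalez vous-m\^eme que ce c\oe ur analytique reste \`a faire; en l'\'etat, la proposition est un plan plausible dont l'\'etape centrale est \`a la fois non d\'emontr\'ee et appuy\'ee sur un argument (Pansu seul) qui \'echoue d\'ej\`a dans l'exemple le plus simple.
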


Le nombre $q_G$ apparaît comme le ``pendant totalement discontinu" de $G$. Le ``pendant connexe" apparaît comme le groupe $G/G_\#$, où $G_\#$ est le plus gros sous-groupe distingué localement elliptique de $G$ (localement elliptique voulant dire réunion croissante d'une suite de sous-groupes compacts). De manière intéressante, il existe un invariant numérique qui compare les taux de compaction dans les deux parties.

\begin{defn}Soit $G$ un TAC de type mixte.
On considère les fonctions modulaires de $G/G_0$ et $G/G_\#$; elles induisent par composition des morphismes $\Delta_\#$ et $\Delta_0$ de $G$ vers $\R^*_+$. En notant au préalable que $\Hom(G,\R)$ est de dimension 1, soit $\varpi_G$ l'unique nombre réel tel que $\log\Delta_\#=\varpi_G\log\Delta_0$.
\end{defn}

\begin{thm}[\cite{CQF}]\label{vpi}$\varpi_G$ est un invariant de quasi-isométrie de $G$.
\end{thm}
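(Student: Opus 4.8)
The plan is to read off $\varpi_G$ from the conformal geometry of the visual boundary, exploiting that $G$ is amenable hyperbolic. By Theorems~\ref{mainccmt} and~\ref{tacmi}, $G$ and any $G'$ quasi-isometric to it act geometrically on CAT$(-1)$ millefeuille spaces $X$ and $X'$, so a quasi-isometry $f\colon G\to G'$ induces a quasi-isometry $X\to X'$ and hence a quasi-M\"obius homeomorphism $\partial X\to\partial X'$. Since $G$ is focal (of mixed type, so non-elementary), it fixes a unique point $\xi$ at infinity, the attracting point of the flow, which is characterized intrinsically; therefore the boundary map sends $\xi_G$ to $\xi_{G'}$ and restricts to a quasisymmetry $\Phi$ of the parabolic boundaries $\partial_\xi X\to\partial_\xi X'$. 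Because $\xi$ is preserved, the Busemann height functions correspond up to an affine scaling $\beta_{X'}\circ f\approx\lambda\,\beta_X$ with $\lambda>0$, and $\Phi$ conjugates the two dilation flows up to this factor $\lambda$.

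First I would describe $\partial_\xi X$ as a foliated self-similar space. In the mixed case the space is a pure millefeuille, so $\partial_\xi X$ is homeomorphic to a product of the parabolic boundary of the connected negatively curved factor by a Cantor set: its connected components are the \emph{leaves}, each a copy of $\partial_\xi$ of the Heintze group $G/G_\#$, and the set of leaves is the lower boundary of the tree $T_k$. The flow element $t$ of unit height acts as a homothety of a visual quasimetric, contracting all distances by a common factor $e^{-\eps}$ while contracting the leafwise Hausdorff measure by $\delta_{\mathrm c}^{-1}$ and the transverse Cantor measure by $\delta_{\mathrm{td}}^{-1}$ per unit height. These contraction factors are exactly the modular data, $\log\delta_{\mathrm c}=\log\Delta_\#(t)$ and $\log\delta_{\mathrm{td}}=\log\Delta_0(t)$ (the denominator being nonzero precisely because $G$ is of mixed type). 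Writing $Q_{\mathrm c}$ and $Q_{\mathrm{td}}$ for the Hausdorff dimensions of a leaf and of the transverse Cantor set in this common metric, one has $Q_{\mathrm c}=\log\delta_{\mathrm c}/\eps$ and $Q_{\mathrm{td}}=\log\delta_{\mathrm{td}}/\eps$, so that the visual parameter cancels and
\[
\varpi_G=\frac{\log\Delta_\#(t)}{\log\Delta_0(t)}=\frac{\log\delta_{\mathrm c}}{\log\delta_{\mathrm{td}}}=\frac{Q_{\mathrm c}}{Q_{\mathrm{td}}}.
\]

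It then remains to show that $\Phi$ preserves this ratio. One robustness is built in: $\log\delta_{\mathrm c}/\log\delta_{\mathrm{td}}$ is unchanged under reparametrization of the flow ($t\mapsto t^{s}$ scales both logarithms by $s$), so the factor $\lambda$ relating the two height functions is harmless. Being a homeomorphism, $\Phi$ carries leaves to leaves and induces a homeomorphism of the transverse Cantor sets, hence respects the leaf/transverse splitting. The heart of the matter is that the quasisymmetric distortion of $\Phi$ acts \emph{isotropically} with respect to this splitting: since $\Phi$ conjugates one dilation homothety of $X$ to one dilation homothety of $X'$, the global change of visual parameter it induces scales the leafwise and the transverse Hausdorff exponents by the same multiplicative constant, so their ratio, namely $\varpi$, is preserved, yielding $\varpi_{G'}=\varpi_G$.

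The hard part will be exactly this last step. One must (i) give an intrinsic, manifestly quasisymmetry-invariant description of the leaf foliation on $\partial_\xi X$, so that $\Phi$ genuinely cannot mix the leafwise and transverse directions, and (ii) prove that the metric distortion of $\Phi$ is proportional on the two directions, i.e.\ that the leafwise and transverse dilation exponents both rescale by the single factor coming from the height correspondence. Controlling this interaction between the quasisymmetry and the dilation flow, for a genuinely mixed millefeuille in which leaves and transverse branching are intertwined, is the technical core; by contrast the reduction to the boundary and the identity $\varpi_G=Q_{\mathrm c}/Q_{\mathrm{td}}$ are comparatively standard for hyperbolic boundaries.
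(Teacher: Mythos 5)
Your reduction to the boundary and the identity $\varpi_G=\log\delta_{\mathrm c}/\log\delta_{\mathrm{td}}$ are fine, but the step you yourself flag as the heart of the matter is a genuine gap, and it cannot be filled by general quasiconformal arguments. Hausdorff dimension is \emph{not} a quasisymmetry invariant: the boundary homeomorphism $\Phi$ induced by a quasi-isometry can change the leafwise and transverse Hausdorff exponents in uncontrolled and, a priori, non-proportional ways, even though it does respect the foliation (the leaves being the connected components of $\partial X\setminus\{\xi\}$). Your assertion that the distortion of $\Phi$ acts ``isotropically'', so that both exponents rescale by the single factor coming from the height correspondence, is essentially a restatement of the theorem rather than a proof; nothing in the construction of $\Phi$ gives such control, and this is exactly the difficulty that makes the quasi-isometric classification of Heintze groups hard (already for a single negatively curved homogeneous space, the visual Hausdorff dimension is not known to be a QI invariant by boundary methods alone). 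Two secondary points also need justification: that $\partial f$ fixes the focal point must come from a purely topological characterization of $\xi$ (available precisely in the mixed case, where $\xi$ is the only point admitting connected small neighborhoods), not from its description as the attracting fixed point of the flow, since $f$ is not equivariant; and the relation $\beta_{X'}\circ f\approx\lambda\,\beta_X$ with $\lambda$ a \emph{constant} is not automatic for a quasi-isometry fixing a boundary point --- one only gets a quasi-isometric reparametrization of the height a priori.

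The paper takes a different and more robust route, as signalled by the remark that the proof uses Theorem~\ref{hp0}: the critical exponent $p_0(G)=\log\delta/\log\lambda$ for non-vanishing of the first reduced $L^p$-cohomology is a quasi-isometry invariant for cohomological reasons (reduced $\overline{H^{1,p}}$ is a QI invariant of compactly generated locally compact groups for every $p$), with $\delta=\delta_{\mathrm c}\delta_{\mathrm{td}}$ explicitly encoding the two modular contributions; $\varpi_G$ is then extracted from such critical exponents rather than from metric data on the boundary. This sidesteps precisely the quasisymmetric-distortion problem on which your sketch founders. To salvage a boundary approach you would need an invariant of the quasisymmetry class that separates the two exponents --- a conformal-dimension or Besov-space argument on $\partial X\setminus\{\xi\}$ --- which is in effect what the $L^p$-cohomology provides.
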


Ce théorème utilise le théorème \ref{hp0} dans sa preuve.

La conjecture est alors: deux TAC de type mixtes $G,H$ sont quasi-isométriques si et seulement s'ils satisfont:
\begin{itemize}
\item $q_G=q_H$;
\item $\varpi_G=\varpi_H$;
\item les TAC de type mixte $G/G_\#$ et $H/H_\#$ sont quasi-isométriques.
\end{itemize}

Cette conjecture ramène le problème de quasi-isométrie au type connexe. Elle est très fortement motivée par le fait qu'en fait, si la conjecture décrite plus haut pour les groupes triangulables est exacte, alors la conjecture dans le type mixte est vraie. Ceci est établi dans \cite{CQF} en utilisant le théorème de Dymarz et le théorème \ref{vpi}. Notons qu'il ne s'agit pas uniquement de spéculations conjecturales, car dans les cas particuliers où les conjectures sont établies dans les cas de type connexe, il en découle directement un résultat concernant les groupes de type mixte.

%%%%%%%%%%%%%%%%%%%%%%%%%%%%%%%%%%%%%%%%%%
%%%%%%%%%%%%%%%%%%%%%%%%%%%%%%%%%%%%%%%%%%
%%%%%%%%%%%%%%%%%%%%%%%%%%%%%%%%%%%%%%%%%%
%%%%%%%%%%%%%%%%%%%%%%%%%%%%%%%%%%%%%%%%%%
%%%%%%%%%%%%%%%%%%%%%%%%%%%%%%%%%%%%%%%%%%
%%%%%%%%%%%%%%%%%%%%%%%%%%%%%%%%%%%%%%%%%%
%%%%%%%%%%%%%%%%%%%%%%%%%%%%%%%%%%%%%%%%%%

\chapter[Propriétés de Kazhdan et Haagerup ]{Travaux autour de la propriété T de Kazhdan et la propriété de Haagerup}\label{chaag}

Cette thématique s'inscrit dans le prolongement de ma thèse. Celle-ci était centrée autour d'une étude de la propriété T relative, notamment dans des réseaux de groupes de Lie connexes et analogues $p$-adiques, typiquement non semi-simples. Les deux principales publications qui en ont alors résulté sont \cite{CJLT,CAENS}. Je décris ci-dessous uniquement les travaux postérieurs à ma thèse.

\section{Propriété T relative}\label{ptr} Les deux principaux travaux que j'ai publié depuis dans ces sujets sont indé\-pen\-dants. Le premier prolonge une note courte du chapitre 7 de ma thèse, un critère pour la propriété T relative dont je ne savais pas prouver la réciproque. Rappelons, si $G$ est un groupe localement compact et $N$ un sous-groupe distingué, que $(G,N)$ a la propriété T relative si dans le dual unitaire de $G$, il existe un voisinage de la représentation triviale constitué de représentations factorisant par $G/N$. Quand $N=G$, cela revient à requérir que la représentation triviale est isolée, et il s'agit de la propriété T telle qu'introduite par Kazhdan \cite{K}. L'exemple prototypique est la propriété T relative pour $(\Gamma\ltimes\R^n,\R^n)$ si $\Gamma$ est un sous-groupe Zariski-dense de $\SL_n(\R)$ pour $n\ge 3$. Burger a observé qu'elle résulte de la non-existence de probabilité invariante sur l'espace projectif de $\R^n$ pour l'action duale de $\Gamma$. Une condition suffisante (mais pas nécessaire, y compris dans le cas scindé) pour la propriété T relative pour $(G,A)$, $A$ étant un sous-groupe abélien distingué fermé de $G$ a été donnée par Shalom \cite{Sha99}: la non-existence d'une moyenne invariante sur les boréliens de $\hat{A}\smallsetminus\{0\}$.

Le résultat principal de \cite{CTETDS} est le suivant:

\begin{thm}
Soit $G=H\ltimes A$ un groupe localement compact $\sigma$-compact avec $A$ abélien. Équivalences:
\begin{enumerate}[(i)]
\item\label{pastr} $(G,A)$ n'a pas la propriété T relative;
\item\label{emoy} il existe une moyenne $m$ dénombrablement approximable sur les boréliens de $\hat{A}$ telle que $m(\{0\})=0$ et $m(V)=1$ pour tout voisinage $V$ de $0$ dans $\hat{A}$.
\item\label{epro} il existe une suite généralisée $(\mu_i)$ de probabilités sur les boréliens de $\hat{A}$ vérifiant les trois conditions suivantes
	\begin{itemize}
	\item $\mu_i$ tend vers $\delta_0$ (convergence faible-* dans $\mathcal{C}_\textnormal{c}(\hat{A})^*)$);
	\item $\mu_i(\{0\})=0$ pour tout $i$;
	\item pour tout $h\in H$ on a $\lim_i\|h\cdot\mu_i-\mu_i\|=0$, et ce uniformément en $h$ sur les compacts de $G$ (autrement dit, $\mu_i$ est asymptotiquement invariante pour la topologie de la norme).
	\end{itemize}
\end{enumerate}
\end{thm}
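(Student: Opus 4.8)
The natural framework is the spectral analysis of unitary representations of $G=H\ltimes A$ afforded by Stone--Naimark--Ambrose--Godement: for a unitary representation $\pi$ of $G$ on $\mathcal H$, the restriction $\pi|_A$ gives a projection-valued measure $E$ on $\hat A$, and each unit vector $\xi$ yields a probability measure $\mu_\xi(B)=\langle E(B)\xi,\xi\rangle$ on $\hat A$. Two elementary computations drive everything. First, the covariance $\pi(h)E(B)\pi(h)^{-1}=E(hB)$ gives $\mu_{\pi(h)\xi}=h_*\mu_\xi$, and a Hahn-decomposition estimate then yields $\|h_*\mu_\xi-\mu_\xi\|_{\mathrm{TV}}\le 4\,\|\pi(h)\xi-\xi\|$, so that almost $H$-invariance of $\xi$ becomes asymptotic invariance of $\mu_\xi$ in total-variation norm. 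Second, $\|\pi(a)\xi-\xi\|^2=\int_{\hat A}|\chi(a)-1|^2\,d\mu_\xi(\chi)$, so almost $A$-invariance of $\xi$ is equivalent to weak concentration of $\mu_\xi$ at $\delta_0$; and the $A$-fixed subspace $E(\{0\})\mathcal H$, which is $G$-invariant since $H$ fixes $0\in\hat A$, is measured by the mass $\mu_\xi(\{0\})$.

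I would first prove (\ref{pastr})$\Rightarrow$(\ref{epro}). If $(G,A)$ fails relative property $T$, there is a net of unit vectors $\eta_\alpha$ in representations of $G$, almost $G$-invariant uniformly on compacta, that do \emph{not} asymptotically concentrate on the $A$-fixed subspaces; that is, $\|(I-P_\alpha)\eta_\alpha\|\ge c>0$ along a subnet, where $P_\alpha=E_\alpha(\{0\})$. Since $P_\alpha$ is a $G$-invariant projection, $\xi_\alpha=(I-P_\alpha)\eta_\alpha/\|(I-P_\alpha)\eta_\alpha\|$ is again almost $G$-invariant and satisfies $\mu_{\xi_\alpha}(\{0\})=0$. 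By the two computations above, the measures $\mu_i=\mu_{\xi_\alpha}$ satisfy exactly the three requirements of (\ref{epro}).

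The equivalence (\ref{epro})$\Leftrightarrow$(\ref{emoy}) is functional-analytic. For (\ref{epro})$\Rightarrow$(\ref{emoy}), take a weak-$*$ cluster point $m$ of the net $(\mu_i)$, regarded as means on bounded Borel functions: asymptotic invariance in norm forces $m$ to be $H$-invariant, $\mu_i(\{0\})=0$ gives $m(\{0\})=0$, and $\mu_i\to\delta_0$ forces $m(V)=1$ for every neighborhood $V$ of $0$; $\sigma$-compactness of $G$ lets one extract the cluster point along a \emph{sequence}, which is precisely what the countable-approximability clause records. For the converse (\ref{emoy})$\Rightarrow$(\ref{epro}) I would run Day's convexity trick: an $H$-invariant mean that is a weak-$*$ limit of probability measures can be approximated, by passing to convex combinations (Mazur's lemma; the weak closure of a convex set equals its norm closure), by probability measures that are \emph{norm}-almost-invariant, uniformly on a prescribed compact set. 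Countable approximability together with $\sigma$-compactness supplies the indexing and the uniformity, and the conditions at $0$ pass directly to the $\mu_i$.

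The crux is (\ref{epro})$\Rightarrow$(\ref{pastr}), the converse to Shalom's criterion. Given a compact $K\subset G$ and $\eps>0$, I want a single unitary representation of $G$ with a $(K,\eps)$-almost-invariant unit vector orthogonal to the $A$-fixed vectors; letting $K\nearrow G$ and $\eps\to 0$ then defeats relative property $T$. The model representation is the Koopman (quasi-regular) representation of $G$ on $L^2(\hat A,\nu)$ for an $H$-quasi-invariant probability measure $\nu$: there $A$ acts by the characters, $H$ acts through the Radon--Nikodym cocycle, the $A$-fixed space is $0$ as soon as $\nu(\{0\})=0$, the constant function $\mathbf 1$ is $A$-almost-invariant when $\nu$ concentrates at $0$, and $\|\pi(h)\mathbf 1-\mathbf 1\|^2$ is bounded by the Hellinger distance and hence by $\|h_*\nu-\nu\|_{\mathrm{TV}}$. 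Thus the whole problem reduces to manufacturing, from the net $(\mu_i)$, a measure $\nu$ that is \emph{globally} $H$-quasi-invariant (so that $\pi(h)$ is defined and unitary for \emph{all} $h\in H$), yet total-variation close to $H$-invariant on the prescribed compact $K$, concentrated near $0$, and null at $0$. This is the main obstacle, and exactly the place where one cannot appeal to an averaging or F\o lner argument over $H$, since $H$ is not assumed amenable: the invariance must be extracted from the norm invariance of the measures on $\hat A$, not from almost-invariant vectors in $L^2(H)$. I would build $\nu$ by saturating $\mu_i$ along a countable subset of $H$ (available by $\sigma$-compactness) with rapidly decaying weights concentrated at the identity, achieving quasi-invariance while keeping the total-variation perturbation on $K$ and the localization near $0$ below $\eps$; controlling this saturation simultaneously in all three respects is the delicate technical heart of the argument.
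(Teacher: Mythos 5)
Your architecture is exactly the one the paper describes for this theorem (the memoir itself only sketches the proof, deferring to \cite{CTETDS} and, for the discrete case of the hard direction, to Ioana \cite{Ioa}): (\ref{pastr})$\Rightarrow$(\ref{epro}) by the Burger--Shalom spectral-measure argument, pushing the vector state to $\hat A$ via the SNAG projection-valued measure after cutting off the $A$-fixed subspace; (\ref{emoy})$\Leftrightarrow$(\ref{epro}) by weak-* cluster points one way and a Day--Namioka/Mazur convexity argument the other way; and the genuinely new implication (\ref{epro})$\Rightarrow$(\ref{pastr}) by manufacturing an $H$-quasi-invariant measure $\nu$ on $\hat A$ and using the Koopman representation on $L^2(\hat A,\nu)$ together with the Hellinger estimate $\|\pi(h)\mathbf{1}-\mathbf{1}\|^2\le\|h_*\nu-\nu\|$. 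You also correctly locate the crux where the paper locates it, namely in the passage from norm-almost-invariant measures to an honest quasi-invariant one.

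The one step that fails as written is your saturation device. A countable weighted sum $\sum_n c_n\,(h_n)_*\mu_i$ over a countable subset of $H$ is, in general, quasi-invariant only under the subgroup generated by the $h_n$: for non-discrete $H$ the map $h\mapsto h_*\mu$ is weak-* continuous but not norm continuous (if $\mu$ is atomic, $h_*\mu\perp\mu$ for generic $h$ arbitrarily close to $1$), so no countable subset can produce quasi-invariance under all of $H$. The correct device is to convolve over the whole group: choose a probability $\lambda$ on $H$ with strictly positive density with respect to a Haar measure and set $\nu_i=\int_H h_*\mu_i\,d\lambda(h)$. Then $h_*\nu_i=(\delta_h*\lambda)*\mu_i$ with $\delta_h*\lambda$ equivalent to $\lambda$, so $\nu_i$ is quasi-invariant under all of $H$; it satisfies $\nu_i(\{0\})=0$ because $H$ fixes $0\in\hat A$; and writing $H$ as an increasing union of compacta $K_N$ (this is where $\sigma$-compactness enters) one gets $\|\nu_i-\mu_i\|\le\sup_{h\in K_N}\|h_*\mu_i-\mu_i\|+2\lambda(H\setminus K_N)$, which is small for $N$ then $i$ large by the uniform asymptotic invariance on compacta, so concentration near $0$ and approximate invariance on the prescribed compact set survive the replacement of $\mu_i$ by $\nu_i$. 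With this correction your outline goes through; the remaining delicacies the paper alludes to (regularity of Borel measures on a possibly non-metrizable $\hat A$, and extracting the uniformity over compacta in (\ref{emoy})$\Rightarrow$(\ref{epro})) are real but of the same ``standard functional analysis'' nature you invoke.
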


Ici, une moyenne est dite dénombrablement approximable si elle est dans l'adhérence d'un ensemble dénombrable de probabilités boréliennes. Il est utile d'avoir en tête qu'à l'instar des ultrafiltres, les moyennes sont des objets ésotériques dont l'existence n'est que théorique et permet des énoncés courts. De fait, la \og {vraie\fg} caractérisation de (\ref{pastr}) est (\ref{epro}). L'équivalence entre (\ref{emoy}) et (\ref{epro}), dont la donnée est uniquement un groupe abélien localement compact $A$ muni d'une action d'un groupe $H$, utilise toutefois des arguments astucieux (mais standards) d'analyse fonctionnelle.

% qui nécessitent l'hypothèse d'approximabilité dénombrable.  

L'implication (\ref{pastr})$\Rightarrow$(\ref{epro}) est donnée par le même argument que ses ancêtres, et la principale contribution ici est surtout d'avoir dégagé le bon énoncé. L'implication réellement nouvelle est (\ref{epro})$\Rightarrow$(\ref{pastr}); elle a été prouvée indépendamment et simultanément par Ioana \cite{Ioa}, qui s'est toutefois épargné de significatives difficultés techniques en se cantonnant au cas discret. 

L'article \cite{CTETDS} donne également des caractérisations analogues théoriques, sous les mêmes hypothèses, de la propriété T relative pour
$(A\rtimes H,X)$ quand $X$ est une partie quelconque $A$; elles montrent en particulier que celle-ci ne dépend que de l'image du morphisme $H\to\Aut(A)$ et non de son injectivité ou de la topologie de $H$. Toutefois, il est encore un problème ouvert que d'expliciter ces conditions dans des cas explicites telles qu'un produit en couronne permutationnel $B^{(Y)}\rtimes\Gamma$ ($B$ groupe de type fini abélien, $\Gamma$ groupe discret, $Y$ $\Gamma$-ensemble).

\section{Propriété de Haagerup des produits en couronne}

Un groupe dénombrable $G$ a la {\em propriété de Haagerup} s'il admet une action isométrique métriquement propre sur un espace de Hilbert. Il a la propriété PW (plus forte) s'il admet une action combinatoire (donc isométrique) métriquement propre sur un complexe cubique CAT(0). 

La propriété PW peut être caractérisée comme ceci: une murage sur $G$ est la donnée d'une famille $G$-équivariante de ``murs", à savoir des parties $(D_x)_{x\in X}$ de $G$ indexées par un $G$-ensemble $X$, tel que $D_{gx}=gD_x$ pour tout $(g,x)\in G\times X$, et tel que pour tout $g,h$, l'ensemble des $x$ tel que $\{g,h\}$ rencontre à la fois $D_x$ et son complémentaire, soit fini. Son cardinal $d(g,h)$ est appelé la distance de murs entre $g$ et $h$; c'est une distance invariante à gauche. Le groupe $G$ a la propriété PW s'il admet un murage définissant une distance propre.

Ce concept a une histoire compliquée et a été dégagé sous des points de vues différents et des motivations, et souvent (à tort!) considéré comme moins intéressant que la propriété de Haagerup (elle-même longtemps à l'ombre de la propriété T). Une illustration typique est le résultat principal de \cite{BJS}, qui est présenté comme le fait qu'aucun groupe de Coxeter (infini dénombrable) n'a la propriété T de Kazhdan, quand en fait la preuve consiste à montrer la propriété PW.

Rappelons que le produit en couronne (standard) $W=H\wr G$ est le produit semi-direct $H^{(G)}\rtimes G$, où le groupe $G$ agit par décalage sur le produit direct restreint $H^{(G)}=\bigoplus_{g\in G}H$. Une remarque triviale mais importante est que si $H$ et $G$ sont de type fini, alors $W$ aussi. Du point de vue de la théorie géométrique des groupes, les cas dégénérés sont celui où $H=1$ (si bien que $W=G$) et celui où $G$ est fini (car alors $W$ admet $H^G$ comme sous-groupe d'indice fini). De ce point de vue, le plus ``petit" exemple non trivial est $(\Z/2\Z)\wr\Z$, appelé groupe de l'allumeur de réverbères (lamplighter group) sur $\Z$. 

La question de déterminer si la propriété de Haagerup est stable par produits en couronne était déjà dans l'air quand j'étais en thèse. Le cas connu était précisément celui où $G$ est moyennable, car alors $H^{(G)}$ étant un sous-groupe co-moyennable de $G$ avec la propriété de Haagerup, $W$ hérite de la propriété de Haagerup. Les autres cas étaient ouverts, le cas prototypique étant $(\Z/2\Z)\rtimes F_2$ (où $F_2$ est le groupe libre). 

Indépendamment, il avait été établi qu'un produit en couronne (non dégénéré au sens où $H\neq 1$, $G$ est infini) n'a jamais la propriété T de Kazhdan, et plus précisément que la fonction $\lambda g\mapsto\#\textnormal{Supp}(\lambda)$ est conditionnellement de type négatif ($\lambda\in H^{G}$, $g\in G$). Cela veut dire qu'il existe une action isométrique de $W$ sur un espace de Hilbert (en fait, sur un complexe cubique CAT(0)) propre en restriction à tout sous-ensemble de $H^{(G)}$ sur qui la fonction cardinal du support est propre. Cependant, cette action est bornée en restriction à tout sous-ensemble infini de $H^{(G)}$ constitué de fonctions dont le support est de cardinal borné, et, typiquement dans le cas où $H=\Z/2\Z$, cela rendait la question coriace; en outre, certains résultats analogues de moyennabilité faible ont poussé à douter de la propriété de Haagerup pour $(\Z/2\Z)\wr G$ pour $G$ non moyennable (voir la discussion dans \cite{CSVa}). 

\begin{thm}[{\cite{CSVa,CSVb}}]
La propriété de Haagerup et la propriété PW sont stables par produit en couronne. 
\end{thm}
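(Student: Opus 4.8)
The plan is to prove both statements in the non-trivial direction: if $H$ and $G$ have the property, then so does $W=H\wr G=\left(\bigoplus_{G}H\right)\rtimes G$ (the converse is easy, as $G$ is a retract of $W$ and $H$ a closed subgroup). I would treat property PW directly through the definition given above, building a $W$-equivariant \emph{murage} on the $W$-set $W$ (acting by left translation) whose wall pseudo-distance is proper, and then act on the associated CAT(0) cube complex. The Haagerup case runs in exact parallel, with the discrete walls replaced by a \emph{measured} wall structure (equivalently, a proper conditionally negative definite function). One cannot shortcut the Haagerup statement by invoking PW$\Rightarrow$Haagerup, since $H,G$ Haagerup need not have PW; but the combinatorics is identical, only ``finite number of separating walls'' being replaced by ``finite measure''.

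Write an element of $W$ as $(\lambda,g)$ with $\lambda\in\bigoplus_G H$ and $g\in G$, so that $(\mu,k)(\lambda,g)=(\mu\cdot{}^k\!\lambda,\,kg)$ with $({}^k\!\lambda)(t)=\lambda(k^{-1}t)$. Fix proper wall structures on $H$ (half-spaces $\mathfrak k\subset H$ indexed by an $H$-set, with $\#\{\mathfrak k:\mathfrak k\text{ separates }1,h\}=d_H(1,h)$ proper) and on $G$ (half-spaces $\mathfrak h\subset G$, with $d_G$ proper). The murage on $W$ uses two families. First, the \emph{base walls}, pulled back along $W\twoheadrightarrow G$: $\widehat{D}_{\mathfrak h}=\{(\lambda,g):g\in\mathfrak h\}$; these are $W$-equivariant through $W\to G$ and separate $(\mathbf 1,1)$ from $(\lambda,g)$ by exactly $d_G(1,g)$ walls. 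Second, the \emph{lamp walls}, which must detect the configuration $\lambda$.

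The naive lamp walls — one copy of $\{(\lambda,g):\lambda(s)\in\mathfrak k\}$ for each position $s\in G$ and each $H$-half-space $\mathfrak k$ — form a $W$-equivariant family (with $(\mu,k)$ sending the index $(s,\mathfrak k)$ to $(ks,\mu(ks)\mathfrak k)$) separating $(\mathbf 1,1)$ from $(\lambda,g)$ by $\sum_{s}d_H(1,\lambda(s))$ walls. But this \emph{fails to be proper}: if $\lambda$ is supported at the single point $s$ with a fixed value $h_0\ne 1$, then $(\lambda,1)$ is at the constant wall-distance $d_H(1,h_0)$ from $(\mathbf 1,1)$ for \emph{all} $s$, whereas its word length is comparable to $2\,d_G(1,s)+d_H(1,h_0)\to\infty$; the resulting ball is infinite. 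This is the genuine content of the theorem, and it reflects the travelling-salesman term in the word metric: reaching a lamp at $s$ and returning costs about $2\,d_G(1,s)$. The fix is to \emph{couple} the lamp walls to the base structure, so that switching on a lamp at $s$ forces crossing a number of walls comparable to $d_G(1,s)$; morally, one replicates ``$\lambda(s)\in\mathfrak k$'' once for each $G$-half-space separating $s$ from the cursor, making the lamp-position contribution comparable to $\sum_{s\in\mathrm{Supp}(\lambda)}d_G(1,s)$.

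Granting such a coupled family, properness follows by a trichotomy: if $(\lambda,g)$ is at bounded wall-distance from $(\mathbf 1,1)$, the base walls bound $d_G(1,g)$ (so $g$ lies in a finite set), the $H$-contribution bounds $\sum_s d_H(1,\lambda(s))$ (so $\mathrm{Supp}(\lambda)$ is finite with values in a finite set), and the coupled contribution bounds $\sum_{s\in\mathrm{Supp}(\lambda)}d_G(1,s)$ (so the support sits in a finite subset of $G$); hence only finitely many $(\lambda,g)$ qualify. \emph{The main obstacle} is the construction of this coupled family itself: one must realize the multiplicity $\approx d_G(1,s)$ through an honest, \emph{basepoint-free}, $W$-equivariant index set — since $G$ has no canonical origin, the coupling must be expressed relative to the moving cursor $g$ rather than to $1$ — while keeping it a genuine murage, i.e.\ with only finitely many walls (finite measure, in the Haagerup case) separating any fixed pair. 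Verifying equivariance, local finiteness, and the lower bound $\gtrsim\sum_{s}d_G(1,s)$ \emph{simultaneously} is the technical heart, forcing the walls to reproduce combinatorially the round trips of the cursor; this is where I expect the real work to lie. The measured version for Haagerup is then obtained by integrating the same incidences against the wall measures of $H$ and $G$.
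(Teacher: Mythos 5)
Your overall strategy is the same as that of \cite{CSVa,CSVb}: a $W$-equivariant wall structure for PW, its measured analogue for Haagerup, and you correctly diagnose the obstruction that the text itself emphasizes, namely that counting lamp walls indexed by positions alone (the function $\lambda g\mapsto\#\mathrm{Supp}(\lambda)$ and its refinements by $d_H$) is conditionally negative definite but not proper, because it misses the travelling-salesman term of the word metric. The problem is that the step you label ``the main obstacle'' is precisely the entire content of the theorem --- as the text says, ``le travail consiste surtout \`a exhiber la bonne famille de murs \'equivariants'' --- and the coupling you sketch does not work as described. A wall must be a fixed subset of $W$, so ``replicating $\lambda(s)\in\mathfrak k$ once for each $G$-half-space separating $s$ from the cursor'' has to mean subsets of the form $\{(\lambda,g):\lambda(s)\in\mathfrak k,\ g\in Y\}$ for a half-space $Y$ of $G$. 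But then infinitely many of these separate $(\mathbf 1,1)$ from $(\mathbf 1,g)$: for every $Y$ separating $1$ from $g$, every position $s\in G$ and every $\mathfrak k$ containing $1_H$, the pair is separated. Local finiteness (finite measure, in the Haagerup case) fails --- exactly the constraint you flag but do not resolve --- so your argument remains conditional on a construction you have not supplied.

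The construction that does work (written in \cite{CSVa} for $H$ finite, the general case reducing to it by a routine argument, cf.\ \cite{CFW}) does not multiply lamp walls: it enriches the base walls by configuration data. A half-space of $W$ is indexed by a pair $(Y,\xi)$ with $Y$ a half-space of $G$ and $\xi$ a finitely supported configuration on $Y^c$, the associated subset being $\{(\lambda,g):\ g\in Y,\ \lambda|_{Y^c}=\xi\}$. This family is basepoint-free and $W$-equivariant, and conditioning on the \emph{entire} configuration outside $Y$ is what restores local finiteness: for $Y$ separating $g$ from $g'$ only one value of $\xi$ contributes, and for $Y$ containing both only the two values $\lambda|_{Y^c},\lambda'|_{Y^c}$ contribute, and only when they differ, i.e.\ for finitely many $Y$. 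The resulting wall distance from $(\mathbf 1,1)$ to $(\lambda,g)$ dominates $d_G(1,g)$ plus a term forcing $\mathrm{Supp}(\lambda)$ into a bounded neighbourhood of $\{1,g\}$; combined with the naive lamp walls this gives properness. For the Haagerup property one integrates the same incidences against invariant measures on the spaces of half-spaces of $G$ and $H$ (Radon measures on $2^\Gamma\smallsetminus\{\emptyset,\Gamma\}$), as in \cite{CSVb}. Your properness trichotomy and the reduction to the measured setting are fine once this family is in place, but without it the proof is not complete.
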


Pour le cas de la propriété PW, l'argument est entièrement écrit dans \cite{CSVa} dans le cas où $H$ est fini et le cas général s'en déduit par un argument banal, voir \cite{CFW}. Le travail consiste surtout à exhiber la bonne famille de murs équivariants, les vérifications a posteriori sont simples.

Dans le cas de la propriété de Haagerup \cite{CSVb}, il faut travailler avec des familles mesurées de murs (ce qui formellement se ramène à considérer des mesures de Radon invariantes sur l'espace localement compact $2^\Gamma\smallsetminus\{\emptyset,\Gamma\}$ quand $\Gamma$ est un groupe discret (qui peut être $H$, $G$ ou $W$, quoique $H$ joue un rôle mineur). Une partie du travail dans \cite{CSVb} consiste, d'autre part, à adapter la preuve à certains autre types de produits ``permutationnels". Par exemple on montre avec les mêmes arguments:

\begin{thm}[\cite{CSVb}]
Soit $\Gamma$ un groupe et considérons une structure de graphe de Coxeter sur $\Gamma$, invariante à gauche, et soit $C$ le groupe de Coxeter correspondant, sur qui $\Gamma$ agit naturellement en permutant les générateurs canoniques. Si $\Gamma$ a la propriété PW ou de Haagerup, alors le \og produit en couronne de Coxeter\fg  {\em (wreathed Coxeter group)} $W'=C\rtimes\Gamma$ a également cette propriété.
\end{thm}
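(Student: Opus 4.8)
The plan is to reduce both statements to the construction of a single well-chosen family of equivariant walls on $W'=C\rtimes\Gamma$. For property PW one wants a $W'$-invariant walling whose wall metric is proper; for the Haagerup property one wants the measured analogue, namely a $W'$-invariant Radon measure on the space of halfspaces $2^{W'}\smallsetminus\{\emptyset,W'\}$ whose associated wall pseudo-metric is proper (properness of such a measured wall structure is equivalent to the Haagerup property). The two proofs follow one recipe, the only difference being that the counting measure of the PW setting is replaced by a general invariant Radon measure in the Haagerup setting. So I would describe the PW construction in detail and indicate the measured modification at the end.

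First I would assemble two natural families of walls. The Coxeter group $C$ carries its canonical reflection walls: to each reflection $t$ (a conjugate of a canonical generator $r_\gamma$) one associates a halfspace of $C$, and the number of reflection walls separating $c_1$ from $c_2$ equals the word length $\ell_C(c_1^{-1}c_2)$. Left-invariance of the Coxeter graph on $\Gamma$ is exactly what makes the permutation $r_{\gamma'}\mapsto r_{\gamma\gamma'}$ an automorphism of $C$; consequently $\Gamma$ permutes the reflections, and the reflection walls of $C$, lifted to $W'$ through the splitting (a halfspace $\mathcal{D}\subseteq C$ lifting to $\mathcal{D}\times\Gamma\subseteq W'$), form a $W'$-invariant family, with the $C$-part acting by conjugation and the $\Gamma$-part by this permutation automorphism. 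Secondly, using the hypothesis that $\Gamma$ has property PW, I would pull back a proper $\Gamma$-walling along the projection $W'\twoheadrightarrow\Gamma$, giving a second $W'$-invariant family that detects the $\Gamma$-coordinate. Here $C$ plays the role that the fiber $H^{(\Gamma)}$ plays for an ordinary permutational wreath product, the difference being that the $r_\gamma$ no longer commute but satisfy the prescribed Coxeter relations; this is harmless, since one uses only the reflection wall structure, which is available for any Coxeter group.

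The hard part is properness, and it is the same obstruction as in the wreath-product theorem of \cite{CSVa}. The naive combined pseudo-metric $\ell_C(c_1^{-1}c_2)+d_\Gamma(\gamma_1,\gamma_2)$ is finite and $W'$-invariant, but it is \emph{not} proper: the generating set $\{r_\gamma\}_{\gamma\in\Gamma}$ is infinite, so already the ball $\{c:\ell_C(c)\le R\}$ is infinite, because this metric does not see \emph{where} in $\Gamma$ a generator $r_\gamma$ is activated. This is precisely the lamplighter phenomenon, in which fiber and base walls together miss the travelling cost of moving a lamp far from the origin. The resolution, transported from \cite{CSVa}, is to \emph{couple} the two families, replacing the bare reflection walls by walls that, when an element uses a generator $r_\gamma$ at position $\gamma$, force the crossing of roughly $d_\Gamma(e,\gamma)$ extra walls, so that activating a far-away generator becomes expensive and the combined wall metric becomes proper. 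Exhibiting this coupled equivariant family is the whole difficulty; as in \cite{CSVa}, once it is written down the remaining checks — $W'$-invariance, local finiteness of the set of walls, and properness of the wall metric — are routine.

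Finally, for the Haagerup property I would run the identical construction with measured walls: replace the counting measure on halfspaces by the invariant Radon measure on $2^{W'}\smallsetminus\{\emptyset,W'\}$ obtained by assembling a measured wall structure on $C$ (any Coxeter group has one) with the measured $\Gamma$-wall structure supplied by the Haagerup hypothesis, coupled in the same way. The resulting wall pseudo-metric is conditionally negative definite and proper, which yields a proper isometric action of $W'$ on a Hilbert space, hence the Haagerup property. The only genuine divergence between the PW and Haagerup cases is the integrality of the measure; the combinatorial heart — the coupled equivariant family of walls — is common to both, which is why the same argument delivers both conclusions.
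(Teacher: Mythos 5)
Your overall strategy coincides with the one the paper attributes to \cite{CSVb}: build a $W'$-equivariant family of walls (counting measure for PW, an invariant Radon measure on the space of half-spaces for Haagerup), observe that the naive juxtaposition of the reflection walls of $C$ with the walls pulled back from $\Gamma$ fails to be proper for the lamplighter reason, and repair this by a coupled family in the spirit of \cite{CSVa}. The preliminary observations are all correct: left-invariance of the Coxeter graph is exactly what makes $\Gamma$ act on $C$ by automorphisms permuting the canonical generators, the reflection walls of $C$ compute the word length $\ell_C$ and lift equivariantly to $W'$, and the combined pseudo-metric $\ell_C + d_\Gamma$ is indeed non-proper because the generating set $\{r_\gamma\}_{\gamma\in\Gamma}$ is infinite.

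The gap is that the coupled family --- which the paper itself stresses is the whole content of the proof (``le travail consiste surtout \`a exhiber la bonne famille de murs \'equivariants, les v\'erifications a posteriori sont simples'') --- is never exhibited. Moreover your assertion that the non-commutativity of the $r_\gamma$ is ``harmless'' is not justified and conceals the actual difficulty. In \cite{CSVa} the coupling is achieved by taking, for each half-space $A$ of the base and each configuration $\xi$ of the lamps outside $A$, a half-space of the form $\{(f,g): g\in A,\ f|_{A^c}=\xi\}$; this uses in an essential way that the fiber is a restricted direct product over $\Gamma$, so that the restriction $f|_{A^c}$ is well defined. When the fiber is the Coxeter group $C$, an element $c$ has no canonical decomposition into a part supported on $A$ and a part supported on $A^c$: the standard parabolic subgroups $C_A$ and $C_{A^c}$ generate $C$, but $C$ is their direct product only when every label between $A$ and $A^c$ equals $2$. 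Hence the walls of \cite{CSVa} have no direct analogue, and one must produce a genuinely new family (built from the reflections of $C$, each reflection being assigned a position in $\Gamma$, coupled with the half-spaces of $\Gamma$) together with the verification that each pair of points is separated by finitely many walls and that the resulting wall metric is proper. That construction is the theorem; your text records that it must exist but does not supply it, and the same remark applies verbatim to the measured version needed for the Haagerup property.
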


Notons les deux cas particuliers: si le graphe de Coxeter n'a que des arêtes étiquetées par 2, alors $W'=(\Z/2\Z)\wr\Gamma$ (produit en couronne standard). Si le graphe de Coxeter n'a que des arêtes étiquetées par $\infty$, alors $W'=(\Z/2\Z)\ast\Gamma$ (produit libre).

Comme évoqué à la fin du \S\ref{ptr}, la question de déterminer quels produits en couronne permutationnels $B\wr_Y\Gamma$ ont la propriété de Haagerup est largement ouverte, y compris dans le cas où $B$ est un groupe à deux éléments et $\Gamma$ est un groupe libre.

\chapter{Topologie de Chabauty}

Le terme ``topologie de Chabauty" désigne plusieurs concepts apparentés. Chabauty a introduit une topologie, compacte, sur l'ensemble des sous-groupes fermés de $\R^d$, qui est une compactification naturelle de l'ensemble des réseaux. La définition s'étend en fait à l'ensemble $\mathcal{F}(X)$ des fermés d'un espace topologique localement compact quelconque $X$. Cette topologie, compacte, est caractérisée par le fait que $\lim F_i=F$ si et seulement si, en notant $\mathcal{V}(x)$ l'ensemble des voisinages de $x\in X$
\[\left\{ \begin{array}{r}
\forall x\in F,\; \exists V\in\mathcal{V}(x),\; \exists i_0\;\forall i\ge i_0,\; F_i\cap V\neq\emptyset; \\
% 0 & \mbox{for} & 3\leq|z|\leq5 \\
\forall x\notin F,\; \exists V\in\mathcal{V}(x),\; \exists i'_0\;\forall i\ge i'_0,\; F_i\cap V=\emptyset.
\end{array}\right.\]
Si $G$ est un groupe localement compact, l'ensemble $\mathcal{S}(G)$ des sous-groupes fermés (respectivement l'ensemble $\mathcal{N}(G)$ de ses sous-groupes distingués fermés) de $G$ est fermé dans $\mathcal{F}(G)$ et est ainsi muni d'une topologie compacte naturelle. 

Dans le cas où $X$ est discret, la topologie sur $\mathcal{F}(X)=2^X$ n'est autre que la topologie produit. Bien qu'il serait probablement plus adapté de l'attribuer à Cantor ou Tychonoff, l'usage est cependant de l'appeler également topologie de Chabauty.

Une découverte de Grigorchuk \cite{Gri84} est que la topologie de Chabauty sur l'ensemble des sous-groupes distingués d'un groupe libre $F_n$ sur $n$ générateurs peut s'interpréter comme espace $\mathcal{G}_n$ des groupes marqués (en interprétant le point $N$ comme le groupe $F/N$ marqué par le $n$-uplet image des générateurs canoniques). Cela permet de définir, au coût de ce marquage, une topologie agréable sur l'espace des groupes de type fini. Une tentative dans ce sens avait été effectuée par Gromov peu avant \cite{Gro81}, mais correspond plutôt au quotient de cet espace par la relation d'isomorphisme de groupes (non marqués), qui n'est pas un espace séparé et n'est donc pas le bon objet.

%La subdivision ci-desso

%Mon travail sur le sujet comprend plusieurs volets:

Une manière de répartir mes travaux sur ce sujet est la suivante:

\begin{itemize}
\item[*] Groupes marqués:
	\begin{itemize}
	\item \cite{CGP} (avec L.~Guyot et W.~Pitsch) {\em On the isolated points in 	the space of groups}. J.~Algebra, 24 pages (2007);
	\item \cite{YMA} {\em A sofic group away from amenable groups}. Math.~Ann., 7 	pages (2011);
	\item \cite{YAF} {\em On the Cantor-Bendixson rank of metabelian groups}. Ann.\ Inst.\ Fourier, 20 pages (2011). 
	\item \cite{BCGS} (avec R.~Bieri, L.~Guyot et R.~Strebel) {\em Infinite 	presentability of groups and condensation} (à paraître au Journal de l'IMJ).
	\end{itemize}
\item[*] Espace de sous-groupes, idéaux, sous-modules:
	\begin{itemize}
	\item \cite{YIJAC} {\em The space of finitely generated rings}. 10 pages 	(2009).
	\item \cite{CGP2}(avec L.~Guyot et W.~Pitsch) {\em The space of subgroups of an abelian group}. J.~London Math.\ Soc., 30 pages (2010).
	\item \cite{YAGT} {\em On the Chabauty space of locally compact abelian groups},  Algebr. Geom. Topol., 29 pages (2011).
	\end{itemize}
\end{itemize}

Le point de départ est l'article \cite{CGP}, écrit quand j'étais en thèse mais non inclus dans celle-ci. Il contient l'observation inédite \cite[lemme 1.3(3)]{CGP} que la topologie (et l'ordre) local autour d'un groupe marqué dans $\mathcal{G}_n$ ne dépend que de la structure du groupe sous-jacent. En particulier, toute propriété topologique locale en un point (être isolé, posséder un voisinage dénombrable\dots) est une propriété intrinsèque d'un groupe de type fini. L'article \cite{CGP} contient diverses caractérisations et propriétés des groupes isolés, ainsi que de nombreux exemples (les seuls observés jusqu'alors étaient, à peu de choses près, les groupes finis et les groupes de présentation finie simples); il démontre également le fait surprenant que l'ensemble des points isolés n'est pas dense.

Le court article \cite{YMA} construit un groupe sofique isolé non moyennable. La méthode n'introduit pas de nouvelle méthode de soficité, le groupe obtenu étant (localement résiduellement fini)-par-moyennable et la soficité de tels groupes étaient bien connue, cependant cela répondait à une question posée par plusieurs auteurs depuis une dizaine d'années \cite{AlGG,Pe,Th}, à savoir trouver un groupe sofique non limite de groupes moyennables. Le groupe est en fait quotient d'un groupe linéaire par un sous-groupe distingué abélien; ceci est un ingrédient utile pour cuisiner des (contre\nobreakdash-)exemples surprenants en théorie des groupes, déjà utilisé par Abels dans \cite{Ab1} et convenablement recyclé dans ma thèse \cite{YPAMS}, ainsi que dans \cite{BCGS,CTAB1}.

L'article \cite{YIJAC} était initialement conçu comme préliminaire à \cite{YAF}; outre alléger, le détacher m'a permis d'écrire mon premier article sans le mot \og groupe\fg! Il s'agit dans \cite{YIJAC} de considérer l'espace des anneaux commutatifs marqués à $n$ générateurs et de considérer sa topologie, et plus précisément de décrire la topologie au voisinage d'un anneau commutatif de type fini quelconque. Plus généralement, il décrit la topologie de l'espace des sous-modules d'un module de type fini sur un tel anneau.

Ce résultat peut être intéressant en soi; je l'ai considéré en étant initialement motivé par le fait que si on a un groupe métabélien de type fini $G$, et si $M$ est un sous-groupe abélien distingué tel que $G/M$ est abélien, alors les sous-groupes distingués de $G$ inclus dans $M$ sont précisément les sous-modules de $M$ (qu'on voit comme module sur l'anneau ---\,commutatif de type fini\,--- engendré par $G/M$). Ceci permet notamment de montrer \cite{YAF} que l'espace des groupes marqués à $d\ge 2$ générateurs contient des ouverts fermés dénombrables de rang de Cantor-Bendixson $\alpha$ pour tout $\alpha<\omega^\omega$. 

L'article \cite{BCGS} s'intéresse à divers renforcements de la propriété d'être de présen\-tation infinie, pour un groupe de type fini. Ces renforcements sont notamment ($S$ désigne une partie génératrice finie):
\begin{enumerate}[(1)]
\item\label{i_pm} l'existence d'une présentation infinie minimale $\langle S\mid r_n, n\ge 0\rangle$, où $r_n$ n'est, pour aucun $n$, conséquence des $r_m$ pour $m\neq n$;
\item l'existence d'une présentation infinie indépendante $\langle S\mid R_n,n\ge 0\rangle$, où $R_n$ est un ensemble (éventuellement infini) de relateurs, qui n'est pour aucun $n$ conséquence de $\bigcup_{m\neq n}R_m$;
\item être de condensation extrinsèque, au sens où pour tout groupe de présen\-tation finie $H$ et sous-groupe distingué $N$ de $H$ tel que $G$ est isomorphe à $G/N$, l'ensemble des sous-groupes distingués de $H$ inclus dans $N$ est non dénom\-brable (cela signifie que comme groupe marqué, $G$ est un point de condensation parmi les groupes au-dessus de $G$, c'est-à-dire ayant $G$ comme quotient marqué);
\item\label{i_pi} être de présentation infinie.
\end{enumerate}

Chacune de ces propriétés implique la suivante et des contre-exemples à chacune des implications sont donnés dans \cite{BCGS}; ceci répond de plusieurs manières à la question initiale, posée par Grigorchuk à Luc Guyot, qui était de savoir si (\ref{i_pi}) implique (\ref{i_pm}).

L'autre principal résultat de \cite{BCGS} est le suivant:

\begin{thm}
Tout groupe métabélien de type fini et de présentation infinie est de condensation extrinsèque. En particulier, il est de condensation. 
\end{thm}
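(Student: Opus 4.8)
Le plan serait de ramener l'\'enonc\'e \`a une question d'ind\'enombrabilit\'e de l'ensemble des sous-modules d'un module de relations, que l'on traiterait \`a l'aide de \cite{YIJAC,CGP2}.

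\medskip
\noindent\emph{R\'eduction.} Soient $H$ de pr\'esentation finie et $N\trianglelefteq H$ avec $H/N\cong G$. Comme $G$ est m\'etab\'elien, le second groupe d\'eriv\'e $H^{(2)}=[[H,H],[H,H]]$ est contenu dans $N$; il suffit donc d'exhiber un nombre ind\'enombrable de sous-groupes distingu\'es de $H$ contenus dans $H^{(2)}$, qui le seront a fortiori dans $N$. Il est indispensable de descendre strictement sous $H^{(2)}$: par un th\'eor\`eme de P.~Hall, tout groupe m\'etab\'elien de type fini satisfait la condition maximale sur les sous-groupes distingu\'es, et n'en poss\`ede donc qu'un nombre d\'enombrable; les sous-groupes recherch\'es correspondent ainsi \`a des quotients \emph{non} m\'etab\'eliens de $H$ se surjectant sur $G$.

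\medskip
\noindent\emph{La pr\'esentation infinie descend.} Posons $\Lambda=H/H^{(2)}$, groupe m\'etab\'elien de type fini, et $\bar N=N/H^{(2)}\trianglelefteq\Lambda$. Par la condition maximale ci-dessus, $\bar N$ est normalement de type fini dans $\Lambda$. Si $\Lambda$ \'etait de pr\'esentation finie, $G=\Lambda/\bar N$ le serait aussi; donc $\Lambda$ est de pr\'esentation infinie, c'est-\`a-dire que $H^{(2)}$ n'est pas normalement de type fini dans le groupe de pr\'esentation finie $H$.

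\medskip
\noindent\emph{Module de relations.} Soit $H^{(3)}=[H^{(2)},H^{(2)}]$. Le groupe ab\'elien $\mathcal{R}=H^{(2)}/H^{(3)}$, muni de l'action de conjugaison (qui factorise par $\Lambda=H/H^{(2)}$), est un $\mathbf{Z}\Lambda$-module: c'est le module de relations de l'extension $H^{(2)}\to\Lambda$. Les sous-$\mathbf{Z}\Lambda$-modules de $\mathcal{R}$ correspondent exactement aux sous-groupes distingu\'es de $H$ compris entre $H^{(3)}$ et $H^{(2)}$, tous contenus dans $N$. Comme $H$ est de pr\'esentation finie (donc de type $\mathrm{FP}_2$) et que $\Lambda$ est m\'etab\'elien de type fini mais non de pr\'esentation finie, donc non de type $\mathrm{FP}_2$, le crit\`ere homologique de Bieri--Strebel \cite{BiS} assure que $\mathcal{R}$ n'est \emph{pas} de type fini sur $\mathbf{Z}\Lambda$. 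Il s'agit alors de montrer que $\mathcal{R}$ poss\`ede un nombre ind\'enombrable de sous-modules.

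\medskip
\noindent\emph{Obstacle principal.} C'est ce dernier point qui concentre la difficult\'e, et c'est l\`a qu'interviennent \cite{YIJAC,CGP2} (ainsi que \cite{YAF}). Deux ph\'enom\`enes sont \`a ma\^\i triser. D'une part, l'anneau $\mathbf{Z}\Lambda$ n'est pas commutatif (puisque $\Lambda$ est m\'etab\'elien non ab\'elien); on utilise que $\Lambda$ est m\'etab\'elien pour se ramener \`a l'anneau commutatif de type fini $\mathbf{Z}[H^{\mathrm{ab}}]$ et \`a l'\'etude de l'espace des sous-modules d'un module sur un tel anneau, en prenant garde \`a ne pas faire dispara\^\i tre, par passage aux coinvariants sous $[\Lambda,\Lambda]$, la non-finitude de $\mathcal{R}$. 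D'autre part, sur un anneau no\'eth\'erien \emph{d\'enombrable} tout module de type fini n'a qu'un nombre d\'enombrable de sous-modules, mais la non-finitude \emph{seule} ne garantit pas l'ind\'enombrabilit\'e (un groupe de Pr\"ufer $\mathbf{Z}_{p^\infty}$ est un $\mathbf{Z}$-module non de type fini dont les sous-modules forment une cha\^\i ne d\'enombrable). Il faut donc exploiter la structure fine de $\mathcal{R}$ comme module de relations pour y exhiber un sous-quotient de la forme $\bigoplus_{\mathbf{N}}W_i$ avec $W_i\neq 0$, les parties de $\mathbf{N}$ fournissant alors $2^{\aleph_0}$ sous-modules distincts --- c'est pr\'ecis\'ement le type d'argument de condensation dans l'espace des sous-modules (respectivement des sous-groupes d'un groupe ab\'elien) d\'evelopp\'e dans \cite{YIJAC,CGP2}. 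J'attends que l'articulation entre la non-finitude (issue de Bieri--Strebel) et cette ind\'enombrabilit\'e effective constitue le c\oe ur technique de la preuve.
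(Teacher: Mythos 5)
Vos r\'eductions pr\'eliminaires sont correctes, et le m\'emoire ne donnant pas la d\'emonstration (renvoy\'ee \`a \cite{BCGS}), je les ai v\'erifi\'ees ind\'ependamment: $H^{(2)}\le N$; la condition maximale de P.~Hall entra\^ine que $\bar N$ est normalement de type fini dans $\Lambda=H/H^{(2)}$, donc que $\Lambda$ est de pr\'esentation infinie; et l'affirmation que $\mathcal{R}=H^{(2)}/H^{(3)}$ n'est pas de type fini sur $\Z\Lambda$ est exacte, via l'argument standard (pour $F\twoheadrightarrow H\twoheadrightarrow\Lambda$ de noyaux $R\le S$, le noyau de $S^{\mathrm{ab}}\to\mathcal{R}$ est un quotient de $R/[R,S]$, qui est de type fini sur $\Z\Lambda$ puisque $H$ est de pr\'esentation finie) combin\'e \`a l'\'equivalence de Bieri--Strebel entre pr\'esentation finie et $\mathrm{FP}_2$ pour les groupes m\'etab\'eliens. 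Votre remarque que l'ind\'enombrabilit\'e doit se produire strictement sous le niveau m\'etab\'elien est \'egalement juste: l'identification rappel\'ee dans le m\'emoire (sous-groupes distingu\'es inclus dans $M$ $=$ sous-modules de $M$ sur l'anneau commutatif noeth\'erien d\'enombrable $\Z[G/M]$) ne fournit qu'une quantit\'e d\'enombrable de sous-groupes distingu\'es \`a ce niveau.

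Le probl\`eme est que la preuve s'arr\^ete exactement l\`a o\`u le th\'eor\`eme commence: vous ne produisez jamais la famille ind\'enombrable, et vous le reconnaissez. Comme vous le notez vous-m\^eme avec l'exemple de Pr\"ufer, \og non de type fini\fg\ n'implique pas \og ind\'enombrablement de sous-modules\fg; or c'est la seule information que votre r\'eduction extrait de l'hypoth\`ese. Le c\oe ur de l'argument de \cite{BCGS} consiste pr\'ecis\'ement \`a exploiter la g\'eom\'etrie de l'invariant de Bieri--Strebel --- un caract\`ere $\chi$ avec $[\chi]$ et $[-\chi]$ hors de $\Sigma$ t\'emoignant de la pr\'esentation infinie --- pour fabriquer dans $N$ une suite de relations dont les cl\^otures normales sont ind\'ependantes (les supports s'\'echappant \`a l'infini dans les deux directions oppos\'ees de $\chi$); rien de tel n'appara\^it dans votre texte. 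Deux points suppl\'ementaires restent en suspens: (a) rien ne garantit que l'intervalle $[H^{(3)},H^{(2)}]$ que vous avez choisi contienne effectivement une famille ind\'enombrable de sous-groupes distingu\'es --- la v\'eracit\'e du th\'eor\`eme n'assure leur existence que quelque part dans $N$, et votre r\'eduction pourrait donc viser un but inatteignable; (b) le passage aux coinvariants sous $[\Lambda,\Lambda]$ pour se ramener \`a un anneau commutatif et appliquer \cite{YIJAC} peut d\'etruire la non-finitude (Nakayama est indisponible sur les anneaux de groupes), et vous n'indiquez aucun moyen de contourner cela. En l'\'etat, il s'agit d'une mise en place plausible, non d'une d\'emonstration.
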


Ici, être de condensation signifie que tout voisinage dans l'espace des groupes marqués est non dénombrable. Réciproquement, pour un groupe métabélien de présen\-tation finie, il existe un voisinage dénombrable, le calcul de son rang de Cantor-Bendixson est effectué dans \cite{YAF}.

Les papiers \cite{CGP2,YAGT} sont moins directement motivés par l'étude de l'espace des groupes marqués. Le premier \cite{CGP2} décrit précisément la topologie de l'espace des groupes d'un groupe dénombrable discret quelconque. Le second donne des résultats moins précis mais dans le cadre plus compliqué des groupes abéliens localement compacts, où les espaces obtenus ne sont pas nécessairement totalement discontinus: calcul de la dimension topologique, caractérisation de la connexité, etc.

Plus précisément: dans \cite{CGP2}, on considère la topologie de l'espace $\mathcal{S}(A)$ des sous-groupes d'un groupe abélien discret quelconque $A$. 
Il est toujours compact et totalement discontinu. Si $A$ est non dénombrable, c'est un espace parfait. Si $A$ est dénombrable, on caractérise sa topologie, et plus précisément étant donné un sous-groupe $B$ de $A$, on décrit la topologie de $\mathcal{S}(A)$ au voisinage de $B$.

Ces espaces sont relativement petits, en terme d'analyse de Cantor-Bendixson. Si $A$ n'est pas minimax (un groupe abélien est dit minimax s'il est isomorphe à un sous-quotient de $\Z[1/k]^n$ pour des entiers $k\ge 1$, $n\ge 0$), l'espace $\mathcal{S}(A)$ est parfait, et est donc un espace de Cantor si $A$ est dénombrable. Le cas intéressant est donc celui où $A$ est minimax. Le rang de Cantor-Bendixson est toujours fini (en contraste avec les exemples de \cite{YIJAC,YAF}), et $\mathcal{S}(A)$ a une topologie parmi les suivantes:

\begin{thm}
Soit $A$ un groupe abélien discret minimax. Alors $\mathcal{S}(A)$ est homéo\-morphe à l'un des espaces suivants (définis ci-dessous):
\begin{itemize}
\item $D^n\times [m]$ pour un certain $n\in\N$, $m\in\N^*$;
\item $D^n\times W$.
\end{itemize}
\end{thm}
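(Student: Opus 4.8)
The plan is to reduce the classification to two purely topological invariants and then read them off from the structure theory of minimax groups. Since a minimax group is a subquotient of some $\Z[1/k]^n$ it is countable, so $\mathcal{S}(A)\subseteq 2^A$ is a compact, metrizable, totally disconnected space, and its homeomorphism type is controlled by the Cantor--Bendixson analysis. The derived sequence $\mathcal{S}(A)=X^{(0)}\supseteq X^{(1)}\supseteq\cdots$ stabilises at the perfect kernel $X^{(\infty)}$, which is either empty (so $X$ is scattered, hence countable) or a Cantor set. In the scattered case the Mazurkiewicz--Sierpinski theorem says that a countable compact metric space is determined up to homeomorphism by the least $k$ with $X^{(k)}=\emptyset$ together with $m=|X^{(k-1)}|$, being then homeomorphic to $\omega^{k-1}\cdot m+1$; since $D\cong\omega+1$, a direct CB computation gives $D^n\cong\omega^n+1$ and hence $D^n\times[m]\cong\omega^n\cdot m+1$. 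Thus the whole statement reduces to: (i) the CB rank is always finite; (ii) when $X^{(\infty)}=\emptyset$ one reads off $(n,m)$; (iii) when $X^{(\infty)}\neq\emptyset$ the space is homeomorphic to the single model $D^n\times W$.

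For (i) and (ii) I would induct on the minimax length $h(A)$ (the number of infinite factors $\Z$ or $\Z(p^\infty)$ in a minimax series), using the local description of the topology of $\mathcal{S}(A)$ near a subgroup $B$ coming from \cite{CGP2} (in the spirit of \cite[lemme 1.3]{CGP}, the germ at $B$ is an intrinsic datum attached to the pair $(B,A/B)$). A clean guess, confirmed on the base cases, is that $B$ is isolated precisely when $B$ is finitely generated and $A/B$ satisfies the minimal condition (is Artinian); the higher CB levels then measure the minimax ``defect'' of $B$ in both directions, which is bounded by $h(A)$. This simultaneously yields finiteness of the CB rank and the identification of $n$ with $h(A)$ in the scattered case, the computations $\mathcal{S}(\Z)\cong\mathcal{S}(\Z(p^\infty))\cong D$ and $\mathcal{S}(\Z[1/p])\cong\mathcal{S}(\Z^2)\cong D^2$ serving as base cases and consistency checks.

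The dichotomy between (ii) and (iii) is governed by whether $A$ has two independent divisible directions at a single prime: I expect $X^{(\infty)}\neq\emptyset$ iff $A$ has a section isomorphic to $\Z(p^\infty)^2$ for some $p$, equivalently iff $A$ has uncountably many subgroups. Such a section produces a continuum of subgroups of the form $\{(x,\lambda x)\}$ parametrised by a $p$-adic projective line, and this profinite family embeds as the Cantor perfect kernel; conversely, in the absence of such a section every germ is scattered, forcing $X^{(\infty)}=\emptyset$. This is exactly where $W$ enters: $W$ is the canonical model for the subgroup space carried by one such $\Z(p^\infty)^2$-section, and the assertion is that the full $\mathcal{S}(A)$ is homeomorphic to $D^n\times W$, with $n$ the minimax length sitting above the Cantor part.

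The main obstacle is step (iii). Once the perfect kernel is nonempty, CB rank and top cardinality no longer determine the homeomorphism type, and one must establish a uniqueness statement for compact metric spaces that are a Cantor set carrying a uniform scattered $D^n$-structure. Concretely I would control the germ of $\mathcal{S}(A)$ at each point of $X^{(\infty)}$, prove that this germ equals the fixed germ $W$ independently of the point (a homogeneity along the Cantor kernel reflecting transitivity of the $p$-adic parameter space), and then construct the homeomorphism $\mathcal{S}(A)\to D^n\times W$ by a back-and-forth argument over a compatible system of clopen partitions. Verifying this homogeneity, and that the $D^n$ above the kernel splits off as a genuine topological product, is the delicate point; by comparison the scattered case (i)--(ii) is a routine application of the classical classification of countable compacta.
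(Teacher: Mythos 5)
The paper itself contains no proof of this statement: it is quoted as an extract of Theorems C and D of \cite{CGP2}, together with the remark that $W$ is characterized up to homeomorphism as the compact metrizable totally disconnected space which is the union of a closed Cantor subspace and a countable, dense, open, discrete subset (a special case of \cite[Proposition 1.3.2]{CGP2}). Measured against that, your architecture is the right one and is essentially that of \cite{CGP2}: Cantor--Bendixson analysis; Mazurkiewicz--Sierpi\'nski in the countable case (your reduction of that case to ``countable plus finite CB rank'' is correct, as are the identification of the isolated points as the finitely generated $B$ with $A/B$ Artinian and the base computations $\mathcal{S}(\Z)\cong\mathcal{S}(\Z(p^\infty))\cong D$, $\mathcal{S}(\Z^2)\cong\mathcal{S}(\Z[1/p])\cong D^2$); and, in the uncountable case, a topological uniqueness theorem for the model space --- which is exactly what \cite[Proposition 1.3.2]{CGP2} supplies.

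That said, what you have written is a plan rather than a proof, and the gaps sit precisely where the content lies. First, of the asserted equivalence ``$\mathcal{S}(A)$ uncountable iff $A$ has a section $\cong\Z(p^\infty)^2$'', only the easy direction is justified (such a section injects an uncountable family of graph subgroups into $\mathcal{S}(A)$); the converse requires showing that without such a section every Hom/extension contribution to $\mathcal{S}(A)$ is countable, which is a genuine structure-theoretic argument, not a formality. Second, in the countable case Mazurkiewicz--Sierpi\'nski only yields $\omega^\alpha\cdot m+1$ for some countable ordinal $\alpha$; the finiteness of $\alpha$ is the substantive point (the surrounding text stresses it, contrasting with \cite{YIJAC,YAF} where ranks up to $\omega^\omega$ occur), and your induction on the minimax length is not carried out --- note that the germ of $\mathcal{S}(A)$ at a non-finitely-generated $B$ is \emph{not} $\mathcal{S}(A/B)$ near the origin, since subgroups Chabauty-close to $B$ need not contain $B$, so the ``two directions'' genuinely interact and must be controlled simultaneously. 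Third, step (iii), which you rightly flag as the main obstacle, is entirely deferred: one needs both a uniqueness statement characterizing $D^n\times W$ (the analogue of \cite[Proposition 1.3.2]{CGP2}) and the verification that $\mathcal{S}(A)$ meets its hypotheses (density of the isolated points, uniformity of the germ along the perfect kernel, splitting off of the $D^n$ factor); none of this is done. In short: the road map is faithful to the actual proof, but the three loads it must bear are all left unsupported.
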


Ce théorème est extrait des théorèmes C et D de \cite{CGP2}, qui sont plus précis. Ici $[m]$ est un ensemble fini discret à $m$ éléments; $D$ est l'union d'une suite convergente et de sa limite, et $W$ est l'``espace de Cantor poussiéreux": c'est un espace métrisable compact totalement discontinu, réunion d'un fermé homéomorphe à un espace de Cantor et d'une partie ouverte, discrète, dénombrable et dense; ces propriétés caractérisent $W$ à homéomorphisme près (c'est un cas particulier de \cite[Proposition 1.3.2]{CGP2}).

Dans \cite{YAGT} je me suis intéressé à la topologie de l'espace $\mathcal{S}(G)$ quand $G$ est un groupe abélien localement compact. Contrairement au cas où $G$ est discret, cet espace n'est pas forcément totalement discontinu. Par exemple, $\mathcal{S}(\R^n)$ admet l'ensemble des réseaux de $\R^n$ comme ouvert dense, qui est une variété topologique de dimension $n^2$. On voit aisément que $\mathcal{S}(\R)$ est homéomorphe à un segment; Hubbard et Pourezza ont montré le résultat surprenant que $\mathcal{S}(\R^2)$ est homéomorphe à une sphère de dimension 4. On peut vérifier que le point $\Z^{n-1}\times\{0\}$ de $\mathcal{S}(\R^n)$ possède un voisinage homéomorphe au produit de $\R^{n^2-n}$ et du cône sur le tore de dimension $n-1$. Il s'ensuit que pour $n\ge 3$, l'espace $\mathcal{S}(\R^n)$ n'est pas une variété topologique. Kloeckner montre cependant qu'il admet une stratification de Goresky-MacPherson; une conséquence notable est que cet espace est localement contractile.

Si $G$ est un groupe abélien localement compact, soit $G^\vee=\Hom(G,\R/\Z)$ son dual de Pontryagin. En outre, il existe un unique entier $r=r(G)\in\N$ tel que $G$ est isomorphe à un produit $\R^k\times H$ et $H$ possède un sous-groupe compact ouvert. Un échantillon des résultats obtenus dans \cite{YAGT} est le suivant:

\begin{thm}[\cite{YAGT}]Soit $G$ un groupe abélien localement compact.
\begin{enumerate}
\item\label{oc} L'application ``orthogonal" $\mathcal{S}(G)\to\mathcal{S}(G^\vee)$ est un homéomorphisme.
\item La dimension topologique de $\mathcal{S}(G)$ est égale à $\dim(G)\dim(G^\vee)$, où $0\cdot\infty=\infty\cdot 0=0$. En particulier, $\mathcal{S}(G)$ est totalement discontinu si et seulement si $G$ est elliptique (i.e.\ réunion croissante de sous-groupe compacts) ou totalement discontinu.
\item Si $r(G)\ge 1$, alors $\mathcal{S}(G)$ est connexe.
\item Si $r(G)=0$, alors les composantes connexes de $\mathcal{S}(G)$ sont toutes homéo\-morphes à des groupes compacts; le nombre de composantes connexes de $\mathcal{S}(G)$ est infini à moins que $G$ ne soit fini.
\end{enumerate}
\end{thm}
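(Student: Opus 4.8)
The plan is to organize all four assertions around Pontryagin duality, so the first step is to set up the annihilator correspondence carefully. Writing $H^\perp=\{\chi\in G^\vee:\chi|_H=0\}$, the map $\Phi\colon H\mapsto H^\perp$ is the classical involutive bijection $\mathcal{S}(G)\to\mathcal{S}(G^\vee)$, with $(H^\perp)^\perp=H$ under $G^{\vee\vee}\cong G$. Since both Chabauty spaces are compact Hausdorff and $\Phi^{-1}$ is a map of the same shape, to prove the first assertion it suffices to show $\Phi$ is continuous: a continuous bijection from a compact space to a Hausdorff space is automatically a homeomorphism. Working with nets and the convergence criterion recalled above, one inclusion is immediate: if $H_i\to H$ and $H_i^\perp\to K$ along a subnet, then for $\chi=\lim\chi_i$ with $\chi_i\in H_i^\perp$ and $h=\lim h_i$ with $h_i\in H_i$, continuity of the pairing $G\times G^\vee\to\R/\Z$ gives $\chi(h)=\lim\chi_i(h_i)=0$, so $K\subseteq H^\perp$.

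The reverse inclusion $H^\perp\subseteq K$ is exactly the lower semicontinuity of the annihilator, and it is subtler than it looks: applying the easy half in $G^\vee$ only reproduces $K\subseteq H^\perp$, so symmetry alone is useless. My plan is a uniform correction argument. Given $\chi\in H^\perp$ and a compact $C\subseteq G$, the values $\chi(H_i\cap C)$ are small once $i$ is large, because $H_i\to H$ and $\chi|_H=0$; thus $\chi$ is \emph{almost} trivial on $H_i$ over $C$. I would then manufacture a genuine element of $H_i^\perp=\widehat{G/H_i}$ that agrees with $\chi$ up to $\epsilon$ on $C$, using the character-extension theorem (every character of a closed subgroup extends to the whole group) to turn the approximate triviality into exact triviality on $H_i$. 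Making this correction uniform on compact sets, as the compact-open topology on $G^\vee$ demands, is the delicate point; once it is in place, $H_i^\perp\to H^\perp$ and the first assertion follows.

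For the dimension formula I would reduce, via the structure theorem $G\cong\R^n\times G_1$ with $G_1$ containing a compact open subgroup, and via the duality homeomorphism just established (which lets me swap $G$ for $G^\vee$ at will), to a short list of building blocks. The model case is $\mathcal{S}(\R^n)$, whose dimension $n^2=\dim(\R^n)\dim((\R^n)^\vee)$ is carried by the lattices (an open dense $n^2$-manifold) together with the lower strata of subgroups of smaller rank, the local cone-over-a-torus structure near $\Z^{n-1}\times\{0\}$ being already visible from the introduction. The general formula I would obtain by exhibiting, near an arbitrary $H\in\mathcal{S}(G)$, a local product model splitting into a continuous deformation part contributing $\dim(G)\dim(G^\vee)$ (the connected directions of $G$ paired against those of $G^\vee$) and a totally disconnected part contributing nothing to covering dimension; the upper bound comes from covering by such charts, the lower bound from embedding an explicit $\dim(G)\dim(G^\vee)$-cell, and the conventions $0\cdot\infty=0$ are absorbed since any zero-dimensional factor kills the product. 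The ``totally disconnected'' corollary then reads off $\dim(G)\dim(G^\vee)=0$, using that $\dim G=0$ means $G$ totally disconnected and $\dim G^\vee=0$ means $G$ elliptic. Pinning down these local models precisely is, I expect, the main obstacle of the whole theorem.

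Finally, connectivity I would extract from the same local analysis, split by $r(G)$. If $r(G)\ge 1$, write $G\cong\R\times G'$; the scaling flow on the $\R$-factor is a one-parameter group of automorphisms of $G$, hence acts on $\mathcal{S}(G)$, and letting the scale tend to $0$ and to $\infty$ deforms any closed subgroup continuously toward subgroups aligned with the flow, which I can then connect to a fixed base point, giving path-connectedness. When $r(G)=0$, $G$ has a compact open subgroup $K$, so the continuous deformation directions of the previous paragraph are now confined to $K$: the local families are compact groups rather than Euclidean cells, and the plan is to show each connected component of $\mathcal{S}(G)$ is a single orbit of such a compact deformation group, hence homeomorphic to a compact group. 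Infinitude of the components unless $G$ is finite I would get from locally constant discrete invariants taking infinitely many values (indices or covolumes, as already for $\mathcal{S}(\Z)$ and $\mathcal{S}(\Q_p)$). The identification of the components with compact groups again rests on the $r(G)=0$ specialization of the local models, which is why I regard those models as the crux.
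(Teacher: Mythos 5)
The memoir itself contains no proof of this theorem (it is quoted from \cite{YAGT}); the only internal indication of where the difficulty lies is the remark that Protasov's argument \cite{PTd} for assertion (1) is merely a reduction to the non-trivial case $G=\R^n$ --- and that non-trivial case is exactly what your proposal leaves open. Your easy half of (1) (upper semicontinuity: a Chabauty limit of annihilators annihilates the limit, via joint continuity of the pairing and compactness of $\mathcal{S}(G^\vee)$) is correct. But for the lower semicontinuity you reduce everything to the following uniform statement: for every compact $C'\subseteq G$ and $\eps>0$ there exist $C''$ and $\delta$ such that, for \emph{every} closed subgroup $L\le G$, any character that is $\delta$-small on $L\cap C''$ lies within $\eps$ of $L^\perp$ uniformly on $C'$. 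This is the entire content of the theorem. It does not follow from the character-extension theorem, which gives surjectivity of $G^\vee\to L^\vee$ but no modulus of openness uniform in $L$; and for $G=\R^n$ it amounts to the continuity of lattice duality along degenerating lattices, a genuine computation. You explicitly defer this ``delicate point,'' so (1) is not proved.

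The other parts have the same shape --- the crux is named and postponed --- and (3) contains a concrete error in the mechanism. On $G=\R\times G'$ the scaling flow $\phi_t(x,g)=(tx,g)$ fixes exactly the subgroups of the form $\{0\}\times H'$ and $\R\times H'$, so its fixed-point set is two copies of $\mathcal{S}(G')$, which is typically totally disconnected (take $G'=\Z$): flowing to a fixed point cannot connect $\{0\}\times 2\Z$ to $\{0\}\times 3\Z$ inside $\mathcal{S}(\R\times\Z)$. The actual mechanism must pass through tilted, non-invariant subgroups, e.g.\ $t\mapsto \Z\cdot(t,n)$, whose Chabauty limits as $t\to 0^+$ and $t\to\infty$ are $\{0\}\times n\Z$ and $\{0\}$ respectively; this step is absent from your plan. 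For (2), the local product models carrying the whole dimension count are asserted, never constructed, and you call them the main obstacle yourself. For (4), ``a single orbit of a compact deformation group, hence homeomorphic to a compact group'' is a non sequitur: orbits of compact groups are homogeneous spaces, and identifying a component with a group rather than a quotient requires an argument. In sum, your framework (annihilators, structure theory, reduction to $\R^n$ and to groups with a compact open subgroup) is the right one, but each of the four assertions is left at the point where the real work begins.
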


Ici (\ref{oc}) a été affirmé par Protasov \cite{PTd}, mais sa preuve est seulement une réduction au cas non trivial de $G=\R^n$.

\appendix\chapter{Quelques problèmes ouverts}

\begin{enumerate}
\item Soit $\K$ un corps localement compact non discret de caractéristique non nulle. Soit $\mathbb{G}$ un $\K$-groupe algébrique affine absolument simple de type adjoint, de $\K$-rang supérieur ou égal à 2, et soit $G=\mathbb{G}(\K)$. Soit $\hat{G}$ le groupe des automorphismes du groupe topologique $G$; il contient le groupe $G$ comme sous-groupe fermé cocompact distingué. On dira qu'un réseau $\Gamma$ de $\hat{G}$ est {\em standard} si $\Gamma\cap G$ est un réseau dans $G$. {\bf Est-ce que $\hat{G}$ possède des réseaux non standards?} si oui, peuvent-ils être classifiés? Les réseaux standards sont classifiés, à commensurabilité près, par le théorème d'arithméticité; en particulier, dans certains cas $G$ n'admet aucun réseau cocompact, mais on ne sait pas si $\hat{G}$ en admet.

\item On sait que le groupe de K-théorie $K_2(\Z)$ est cyclique d'ordre 2. Soit $^\omega\Z$ l'ultraproduit de $\Z$ par rapport à un ultrafiltre non principal $\omega$ sur les entiers. L'injection évidente $K_2(\Z)\to K_2(^\omega\Z)$ est-elle un isomorphisme?

\item (cf.\ la fin du chapitre \ref{chafon}) On sait que la fonction de Dehn d'un groupe de Lie simplement connexe 2-nilpotent est entre quadratique et cubique. Peut-on la déterminer (en fonction de propriétés algébriques de l'algèbre de Lie)? en particulier, peut-on déterminer quand elle est quadratique, respectivement cubique?

\item (cf. la fin du chapitre \ref{chaag}) Soit $F$ un groupe libre de type fini, $H$ un sous-groupe. Soit $C$ un groupe cyclique non trivial. Quand-est ce que le produit en couronne $C\wr_{F/H}F$ a la propriété de Haagerup? (La réponse dépend de $H$, mais probablement pas de $C$. Lorsque $H$ est distingué, la réponse est oui si et seulement si $F/H$ a la propriété de Haagerup, par \cite{CSVb,CIo}.)

\end{enumerate}

%%%%%%%%%%%%%%%%%%%%%%%%%%%%%%%%%%%%%%%%%%%%%%%%%%%%%%%%%%%%%%%%%%%
%\renewcommand{\bibname}{Références \small{(liste de publications non exhaustive: voir la liste complète annexée au CV)}}

% ----------------------------------------------------------------
\end{document}